\pgfplotsset{compat=1.18}
\newcommand{\R}{\mathbb{R}}
\newcommand{\C}{\mathbb{C}}
\newcommand{\N}{\mathbb{N}}
\renewcommand{\P}{\mathbb{P}}
\newcommand{\A}{\mathbb{A}}
\newcommand{\sse}{\subseteq}
\newcommand{\EC}[1]{\underset{n \in [N]\,}{\A^{#1}}}
\newcommand{\ECC}[2]{\underset{#1 \in #2\,}{\A}}
\newcommand{\ECw}{\underset{n \in [N]\,}{\A^w}}
\newcommand{\Elog}{\underset{n \in [N]\,}{\A^{\log}}}
\newcommand{\Elogg}[2]{\underset{#1 \in #2\,}{\A^{\log}}}
\newcommand{\Ellog}{\underset{n \in [N]\,}{\A^{\log \log}}}
\newcommand{\AVG}{\frac{1}{N} \sum_{n=1}^{N}}
\newcommand{\limAVG}{\lim_{N \to \infty} \frac{1}{N} \sum_{n=1}^{N}}
\newcommand{\norm}[1]{\left\lVert#1\right\rVert}
\newcommand{\floor}[1]{\lfloor #1 \rfloor}
\newtheorem{theorem}{Theorem}[section]
\newtheorem*{theorem*}{Theorem}
\newtheorem{prop}[theorem]{Proposition}
\newtheorem{lemma}[theorem]{Lemma}
\newtheorem{corollary}[theorem]{Corollary}
\newtheorem*{corollary*}{Corollary}
\theoremstyle{definition}
\newtheorem*{definition*}{Definition}
\newtheorem{defn}[theorem]{Definition}
\theoremstyle{remark}
\newtheorem{remark}[theorem]{Remark}
\newtheorem*{remark*}{Remark}
\def\thmhead@plain#1#2#3{%
  \thmname{#1}\thmnumber{\@ifnotempty{#1}{ }\@upn{#2}}%
  \thmnote{ {\the\thm@notefont#3}}}
\let\thmhead\thmhead@plain
\theoremstyle{definition}
\title{Ergodic Averages Along Sequences of Slow Growth}
\author{Kaitlyn Loyd}
\address{Department of Mathematics\\
University of Maryland\\
College Park, MD 20742}
\email{\href{mailto:loydka@umd.edu}{loydka@umd.edu}}
\author{Sovanlal Mondal}
\address{Department of Mathematics\\
The Ohio State University \\
Columbus, OH 43210}
\email{\href{mailto:mondal.56@osu.edu}{mondal.56@osu.edu}}
\date{\today}
\thanks{The first author was partially supported by NSF grants DMS-1502632 and DMS-2402158.}
\begin{document}

\begin{abstract}
    We consider pointwise convergence of weighted ergodic averages along the sequence \( \Omega(n) \), where \( \Omega(n) \) denotes the number of prime factors of \( n\) counted with multiplicities. It was previously shown that \( \Omega(n) \) satisfies the strong sweeping out property, implying that a pointwise ergodic theorem does not hold for \( \Omega(n) \). We further classify the strength of non-convergence exhibited by \( \Omega(n) \) by verifying a double-logarithmic pointwise ergodic theorem along \( \Omega(n) \). In particular, this demonstrates that \( \Omega(n) \) is not inherently strong sweeping out. We also show that the strong sweeping out property for slow growing sequences persists under certain perturbations, yielding natural new examples of sequences with the strong sweeping out property. 
\end{abstract}

\maketitle

\section{Introduction}

    For \( n \in \N \), let \( \Omega(n) \) denote the number of prime factors of \(n \), counted with multiplicities. Though the asymptotic behavior of \( \Omega(n) \) has been long studied in analytic number theory, this paper continues the study of this sequence from a dynamical point of view. In particular, we provide a finer classification of the strength of non-convergence exhibited by the ergodic averages along \( \Omega(n) \). The dynamical approach to the study of \( \Omega(n) \) was introduced by Bergelson and Richter \cite{BR2020}, who prove that in uniquely ergodic systems \( (X, \mu, T )\), 
    \[
        \limAVG f(T^{\Omega(n)} x) = \int f \, d\mu
    \]
    for all continuous functions \( f \) and every \(x \in X\). The assumptions of unique ergodicity and continuity are key to the Bergelson-Richter Theorem. In \cite{Loyd2021}, it was shown that, given any non-atomic ergodic system, pointwise convergence for \( L^p \) functions fails to hold, \( 1 \leq p \leq \infty\). 
    
    \begin{theorem}[\cite{Loyd2021}]
    \label{thm: Pointwise Nonconvergence}
        Let \((X, \mathcal{B}, \mu, T)\) be a non-atomic ergodic dynamical system. Then for all \( \varepsilon > 0 \), there is a set \(A \in \mathcal{B}\) such that \( \mu(A) < \varepsilon \) and 
        \[
            \limsup_{N \to \infty} \AVG \mathbbm{1}_A(T^{\Omega(n)}x) = 1  
            \quad 
            \text{ and } 
            \quad
            \liminf_{N \to \infty} \AVG \mathbbm{1}_A(T^{\Omega(n)}x) = 0
        \]
        for \(\mu\)-almost every \(x \in X\), where \( \mathbbm{1}_A \) denotes the indicator function of a set \(A\).
    \end{theorem}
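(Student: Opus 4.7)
The plan is to establish the strong sweeping out (SSO) property for the averages $A_N f(x) = \frac{1}{N}\sum_{n=1}^N f(T^{\Omega(n)} x)$, from which the stated dichotomy $\limsup = 1$, $\liminf = 0$ follows immediately. Fix a non-atomic ergodic $(X,\B,\mu,T)$ and $\varepsilon > 0$. The argument proceeds in two stages: first an ``in-measure'' sweeping lemma obtained via a direct Rokhlin tower construction, and second a promotion to almost-everywhere sweeping via a Baire-category principle in the symmetric-difference metric on measurable sets.

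The essential input from analytic number theory is the Hardy--Ramanujan/Tur\'an concentration estimate,
\[
    \#\{n \leq N : |\Omega(n) - \log\log N| > \lambda\sqrt{\log\log N}\} \ll N/\lambda^2,
\]
which says that the empirical measure $\nu_N = \frac{1}{N}\sum_{n \leq N} \delta_{\Omega(n)}$ places mass $\geq 1 - \delta$ on a narrow window $I_N$ of width $O_\delta(\sqrt{\log\log N})$ centered near $\log\log N$. Consequently, up to an additive error $\delta$, one may replace $A_N \mathbbm{1}_B(x)$ by a convex combination of the indicators $\mathbbm{1}_B(T^k x)$ over $k \in I_N$; this is the reduction that turns the number-theoretic problem into a small-scale dynamical one.

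For the tower stage, invoke Rokhlin's lemma (applicable since $T$ is ergodic on a non-atomic space) to produce a tower of height $H$ much larger than all window widths of interest, with base $F$ satisfying $\mu(F) \approx 1/H$. Choose $B$ to be a contiguous block of $m$ levels with $\sqrt{\log\log N_{\max}} \ll m \ll \varepsilon H$, so that $\mu(B) < \varepsilon$. For each point $x$ at level $j_0$ in the tower, by selecting $N$ with $\log\log N$ close to $j_1 - j_0 + m/2$ (where $j_1$ is the first level of $B$), the shifted window $j_0 + I_N$ falls entirely inside the block $B$, which forces $A_N \mathbbm{1}_B(x) \geq 1 - \delta$. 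Since every tower level is reachable by some such $N$, this produces $\sup_N A_N \mathbbm{1}_B \geq 1 - \delta$ on a set of measure $\geq 1 - \eta$; an analogous argument, in which $B$ is replaced by the complement of a block, yields $\inf_N A_N \mathbbm{1}_B \leq \delta$ on a large set.

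The main obstacle is bridging these ``in-measure'' statements to the almost-everywhere conclusion for a \emph{single} set $A$ realizing both extremes simultaneously. I would handle this via the Rosenblatt--Wierdl Baire-category framework: the space of measurable sets of measure $<\varepsilon$, equipped with the symmetric-difference pseudometric, is a complete metric space, and the in-measure sweeping just established forces the collection of sets achieving both $\limsup A_N \mathbbm{1}_A = 1$ and $\liminf A_N \mathbbm{1}_A = 0$ a.e.\ to be $G_\delta$-dense. Any element of this intersection serves as the desired $A$. The delicate point inside this framework is the interplay between the very large $N$ needed to cover deep tower levels and the blowup $\sqrt{\log\log N}$ in window width, and ensuring that the Borel--Cantelli bookkeeping along the chosen subsequence of scales does not inflate $\mu(A)$ past $\varepsilon$; here the slow, roughly $\log\log$-scale growth of the window centers works in our favour, since a very large arithmetic range of $N$ produces only a mild change in the window location.
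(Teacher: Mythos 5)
Your proposal is correct in outline and takes essentially the same route as the paper: the Hardy--Ramanujan/Tur\'an concentration estimate reduces the Ces\`aro averages along \( \Omega(n) \) to weighted averages supported on a window of width \( O(\sqrt{\log\log N}) \) centered at \( \log\log N \), after which a Rokhlin-tower construction together with transference and the del Junco--Rosenblatt Baire-category upgrade (the mechanism behind \cref{lem: Transference}, \cref{prop: Operator SSO}, and the interval criterion \cref{prop: SSO Interval Condition} used for \cref{thm: Perturbations of SSO}) yields strong sweeping out. The only substantive difference is that you bypass the Gaussian approximation of \( \pi_N(k)/N \) from the original argument and use concentration alone, which indeed suffices and mirrors the paper's alternative derivation of this theorem from \cref{thm: Perturbations of SSO} via the Erd\H{o}s--Kac/Hardy--Ramanujan theorems; the remaining issues (completeness should be invoked for the measure algebra rather than the open set of sets of measure \( <\varepsilon \), and the \( \liminf \) part is cleanest via complements) are minor bookkeeping within that standard framework.
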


    \cref{thm: Pointwise Nonconvergence} verifies that the sequence \( \Omega(n) \) exhibits the \textit{strong sweeping-out property}, a stronger notion of non-convergence than non-existence of a pointwise ergodic theorem (see \cref{ss: SSO} for relevant definitions). Thus, sequences with the strong sweeping out property are considered particularly badly behaved for Ces\`aro averages. However, given a strong sweeping out sequence, it is possible there exists a form of averaging weaker than Ces\`aro which recovers pointwise convergence. For instance, this is true of \( \floor{\log n} \) and \( \floor{\log \log n} \). In \cref{sec: Proof of Double Log Convergence}, we show this to be the case for \( \Omega(n) \) by proving a double logarithmic pointwise ergodic theorem. 

    \begin{theorem}
    \label{thm: Double Log Convergence}
        Let \( (X, \mu, T) \) be an ergodic dynamical system. Then for all \( f \in L^1(\mu) \),
        \[
            \lim_{N \to \infty} \frac{1}{\log \log N} \sum_{n = 1}^N \frac{1}{n \log n} \, f(T^{\Omega(n)}x) 
            = 
            \int f \, d\mu
        \]
        for \( \mu \)-almost every \( x \in X \). 
    \end{theorem}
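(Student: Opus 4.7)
The plan is to rearrange the weighted sum by the value $k = \Omega(n)$, show that the resulting probability measure on the orbit $(T^k x)_{k \geq 1}$ is close in total variation to a uniform measure on $\{1, \ldots, \lfloor \log \log N \rfloor\}$, and conclude via Birkhoff's pointwise ergodic theorem. Setting $L := \log \log N$ and
\[
c_k(N) := \frac{1}{L} \sum_{\substack{n \leq N \\ \Omega(n) = k}} \frac{1}{n \log n}, \qquad k \geq 1,
\]
the average in the theorem equals $\sum_{k \geq 1} c_k(N) f(T^k x)$, and $\{c_k(N)\}_k$ is a probability measure on $\N$.

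The key analytic input is a uniform version of the Landau--Selberg--Sathe asymptotic
\[
\#\{n \leq t : \Omega(n) = k\} \sim \frac{t (\log \log t)^{k-1}}{(k-1)! \, \log t},
\]
valid for $k$ in a suitable range around $\log \log t$. Combining this with Abel summation and the change of variables $u = \log \log t$ (under which $\frac{dt}{t \log t} = du$), I would derive
\[
\sum_{\substack{n \leq N \\ \Omega(n) = k}} \frac{1}{n \log n} = \int_0^L \frac{u^{k-1}}{(k-1)!} \, e^{-u} \, du + o(1),
\]
which is the CDF of the $\mathrm{Gamma}(k,1)$ law at $L$. Since this law has mean and variance equal to $k$, the CDF is close to $1$ for $k \leq L - C\sqrt{L}$, close to $0$ for $k \geq L + C\sqrt{L}$, and transitions on a window of width $O(\sqrt{L})$. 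Standard Chebyshev/Chernoff bounds then yield
\[
\sum_{k \geq 1} \Bigl| c_k(N) - \tfrac{1}{L} \mathbbm{1}_{\{1, \ldots, \lfloor L \rfloor\}}(k) \Bigr| = O(L^{-1/2}).
\]

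For $f \in L^\infty(\mu)$ this total variation estimate gives
\[
\sum_{k \geq 1} c_k(N) f(T^k x) = \frac{1}{L} \sum_{k=1}^{\lfloor L \rfloor} f(T^k x) + O_f(L^{-1/2}),
\]
and Birkhoff's pointwise ergodic theorem applied to the Cesàro average on the right produces convergence to $\int f \, d\mu$ for $\mu$-a.e.\ $x$. To extend from $L^\infty$ to $L^1$, I would combine the uniform bound $c_k(N) \leq 1/L$ on the bulk with the exponential decay of $c_k(N)$ for $k \gtrsim L$ to dominate the maximal function $\sup_N \bigl| \sum_k c_k(N) f(T^k x) \bigr|$ by a constant multiple of the standard Birkhoff maximal function applied to $|f|$, which is weak-$(1,1)$. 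The Banach principle then transfers a.e.\ convergence from the dense class $L^\infty(\mu)$ to all of $L^1(\mu)$. The main obstacle will be obtaining the Landau--Selberg--Sathe asymptotic with enough uniformity in $k$ to handle the transition window $|k - L| \lesssim \sqrt{L}$, where the refined saddle-point analysis of Selberg--Sathe is required in place of the elementary Landau estimate.
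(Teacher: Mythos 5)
Your initial move --- regrouping by $k = \Omega(n)$ to obtain the weights $c_k(N) = \eta_N(k)/\log\log N$ --- is exactly the paper's starting point, and the Gamma-CDF heuristic for $\eta_N(k)$ via Abel summation with $u = \log\log t$ is a clean way to see why the weights should look like $\frac{1}{L}\mathbbm{1}_{[1,L]}$ in bulk. But two of your stated reductions have genuine gaps.

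First, a minor one: the proposed uniform asymptotic $\eta_N(k) = \int_0^L \frac{u^{k-1}e^{-u}}{(k-1)!}\,du + o(1)$ is false for small fixed $k$. For example $\eta_N(1) = \sum_{p\le N}\frac{1}{p\log p} \to \sum_p \frac{1}{p\log p} \approx 1.636$, whereas the Gamma integral tends to $1$. The paper handles the low-$k$ range not via Sathe--Selberg but by a crude bound $\eta_N(k)\le \eta(1)$ (Erd\H{o}s, \cref{lem: Erdos Eta}) for finitely many $k$, together with the Gorodetsky--Lichtman--Wong result (\cref{lem: GLM Estimate}) that the \emph{infinite} Erd\H{o}s sums $\eta(k)\to 1$; it never needs a precise asymptotic for $\eta_N(k)$ at fixed small $k$. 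Your $O(L^{-1/2})$ total-variation bound is probably true, but as stated it is derived from a formula that fails where the contribution is $O(1/L)$, and the constant $1/L$ should be $\eta(1)/L$ on the bulk in any case. This is repairable but needs to be done.

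The more serious gap is the $L^1$ step. You propose to dominate $\sup_N\bigl|\sum_k c_k(N)f(T^kx)\bigr|$ by the Birkhoff maximal function, using ``exponential decay of $c_k(N)$ for $k\gtrsim L$.'' Such a domination requires a \emph{per-$k$} bound of the form $\eta_N(k)\le g(k-L_N)$ for a rapidly decaying envelope $g$, uniformly in $N$, on the entire range $L_N < k \le N$. This is substantially stronger than the Sathe--Selberg asymptotic, whose uniformity breaks down well before $k = 2\log\log N$ (for $\Omega$, already around $k = (2-\delta)\log\log N$), and nothing cited in your sketch supplies it. The paper sidesteps precisely this issue: its maximal inequality (\cref{prop: MEI for loglog}, proved by an interval-covering argument with the uniform bound $\eta_N(k)\le\eta(1)$) is stated only for the truncated sum $\sum_{k\le L_N}\eta_N(k)f(T^kx)$, and the tail $\sum_{k>L_N}\eta_N(k)f(T^kx)$ is handled by an entirely separate mechanism: a \emph{summed} tail estimate $\frac{1}{\log\log N}\sum_{k\ge L_N}\eta_N(k) = O((\log\log N)^{-1/4})$ (\cref{lem: eta rate}, via R\'enyi--Tur\'an), combined with Markov's inequality and Borel--Cantelli along the doubly exponential subsequence $N_i = \lfloor 2^{2^{\rho^i}}\rfloor$, plus the sandwiching reduction \cref{lem:reduction} to pass back from the subsequence to all $N$. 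The distinction is not cosmetic: a total-mass bound on $\sum_{k>L_N}\eta_N(k)$ does not yield a Birkhoff-maximal-function domination, because the tail extends over $k\in[L_N,N]$ and a single per-$k$ weight that is merely $O(1/\log\log N)$ gives a useless factor $N/\log\log N$ when compared against Ces\`aro normalization. If you want to carry out your plan, you must either prove the pointwise tail envelope (which would require a genuinely new number-theoretic input beyond the standard uniform range) or replace that step with a Borel--Cantelli-type argument on a sparse subsequence, as the paper does.
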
 
    
    \cref{thm: Double Log Convergence} differentiates the ergodic behavior of \( \Omega(n) \) from strong sweeping out sequences such as \( 2^n \), for which, under mild conditions, no weaker form of averaging admits pointwise convergence \cite{RosenblattWierdl1994}. Such sequences are called \textit{inherently strong sweeping out}. Then, although \( \Omega(n) \) has the strong sweeping out property, it is not inherently strong sweeping out.   
    
    In \cref{sec: Proof Log SSO}, we demonstrate that \cref{thm: Double Log Convergence} is sharp, in the sense that \( \Omega(n) \) retains the strong sweeping out property for logarithmic averages.
    
    \begin{theorem}
    \label{thm: Log Sweeping Out}
        Let \((X, \mathcal{B}, \mu, T)\) be a non-atomic ergodic dynamical system. Then for all \( \varepsilon>0 \), there is a set \(A \in \mathcal{B}\) such that \( \mu(A) < \varepsilon\) and
            \[
                \limsup_{N \to \infty} \frac{1}{\log N} \sum_{n = 1}^N \frac{\mathbbm{1}_A(T^{\Omega(n)}x)}{n}  = 1  
                \quad 
                \text{ and } 
                \quad
                \liminf_{N \to \infty}\frac{1}{\log N} \sum_{n = 1}^N \frac{\mathbbm{1}_A(T^{\Omega(n)}x)}{n}  = 0
            \]
        for \( \mu \)-almost every \(x \in X\).
    \end{theorem}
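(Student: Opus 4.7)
The plan is to adapt the strategy used in the proof of Theorem \ref{thm: Pointwise Nonconvergence} in \cite{Loyd2021} to the logarithmic setting. The two main ingredients are (i) a log-weighted concentration estimate for $\Omega(n)$, and (ii) a Rokhlin-tower construction of the sweep-out set, followed by a diagonalization to produce a single set witnessing the sweep-out for $\mu$-a.e.\ $x$.

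The necessary number-theoretic input is the log-weighted analogue of Sathe-Selberg (obtainable from the usual Sathe-Selberg by partial summation): for any $\omega(N) \to \infty$,
\[
    \frac{1}{\log N}\sum_{\substack{n \leq N \\ |\Omega(n) - \log\log N| > \omega(N)\sqrt{\log\log N}}} \frac{1}{n} \longrightarrow 0.
\]
Equivalently, the log-weighted distribution of $\Omega(n)$ on $[1,N]$ is asymptotically Poisson of mean $\log\log N$, concentrated on a window of width $O(\sqrt{\log\log N})$ about $\log\log N$.

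I would then choose a rapidly growing sequence of scales $N_j$ with $\log\log N_j = 4^j$, so that the concentration windows $I_j \subset [4^j - C\,2^j,\, 4^j + C\,2^j]$ at different scales are pairwise disjoint for $j$ large. Given $\varepsilon > 0$, fix $K$ and use Rokhlin's lemma to build a tower $\{B, TB, \ldots, T^{H-1}B\}$ of height $H \gg 4^K/\varepsilon$ and total measure close to $1$. Partition $\{1,\ldots,K\} = J \sqcup J^c$ with both $|J|$ and $|J^c|$ tending to infinity as $K \to \infty$, and let $A_K$ be the union of the tower levels at heights in $\bigcup_{j \in J,\, j \leq K} I_j$. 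Since there are $\lesssim 2^K$ such levels, $\mu(A_K) \lesssim 2^K/H < \varepsilon$. For $\mu$-a.e.\ $x$ in the base of the tower, at scale $N_j$ with $j \in J$ the orbit point $T^{\Omega(n)}x$ sits at a tower level in $I_j$ for a log-fraction $1 - o(1)$ of $n \leq N_j$, so the log average is $\geq 1 - o(1)$; at scales $N_j$ with $j \in J^c$ the log average is $\leq o(1)$ for the same reason. Sending $j \to \infty$ along the two subsequences yields the desired $\limsup = 1$ and $\liminf = 0$ for $x$ in the tower.

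The main technical obstacle is to upgrade this tower-local statement to a single set $A$ with $\mu(A) < \varepsilon$ for which the sweep-out holds at $\mu$-a.e.\ $x \in X$. As in the Cesàro case treated in \cite{Loyd2021}, this is achieved by a diagonal construction over a nested family of Rokhlin towers whose bases fill up $X$ (modulo null sets), together with a Borel-Cantelli style argument ensuring that the oscillation between $0$ and $1$ survives the union. A secondary point is that the error terms in the Sathe-Selberg estimate must be uniform enough across the chosen subsequence of scales to yield exactly $\limsup = 1$ and $\liminf = 0$ (rather than $\geq 1 - \delta$ and $\leq \delta$); this is handled as in the Cesàro proof by allowing $\omega(N)$ to grow arbitrarily slowly and letting $\varepsilon \to 0$ through a diagonal sequence of sets.
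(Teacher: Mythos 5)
Your number-theoretic ingredient is fine and matches what the paper actually uses: the log-weighted concentration of $\Omega(n)$ around $\log\log N$ in a window of width $O(\sqrt{\log\log N})$ is exactly \cref{lem: logarithmic weight support}, proved there by partial summation from \cref{thm: Hardy Ramanujan}. The genuine gap is in the dynamical half. Your tower construction places the set $A_K$ at the \emph{absolute} heights $\bigcup_{j\in J} I_j$ inside a Rokhlin tower of height $H\gg 4^K/\varepsilon$, and the oscillation argument you give is only valid for $x$ in the base (or within $O(2^K)$ of finitely many prescribed levels). For $x$ at a generic level $\ell$ of the tower, the orbit points $T^{\Omega(n)}x$ at scale $N_j$ concentrate at heights $\ell+I_j$, and for typical $\ell$ these windows miss $\bigcup_{j\in J}I_j$ at \emph{every} scale, so the log averages are $o(1)$ for all $j$ and the $\limsup=1$ claim fails; the set of levels where your argument works has measure $O(K2^K/H)<\varepsilon$, which is precisely the opposite of what strong sweeping out requires (small exceptional set, oscillation almost everywhere). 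The proposed repair --- ``a diagonal construction over a nested family of Rokhlin towers whose bases fill up $X$'' --- is not an argument: tower bases have measure $\approx 1/H$ and cannot fill up $X$ while the towers stay tall, and the set $A$ cannot be allowed to depend on the tower containing $x$.

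The missing idea is the translation mechanism that makes \emph{most} points good, not just points near the base: because the window centers $\log\log N_j$ spread out much faster than the window widths, one can take the sweep-out set to be a \emph{single} short interval and observe that every translate $k$ lying in $-I_j$ (for some scale $j$) already sees averages $\ge 1-\varepsilon$, and the union of these translates is long compared to the set. This is the del Junco--Rosenblatt scheme, implemented in the paper via the transference result \cref{lem: Transference} (compare \cref{prop: SSO Interval Condition} in the Ces\`aro setting), together with a Borel--Cantelli argument over $\varepsilon_m\to 0$ to upgrade ``$\sup_N$ of averages $>1-\varepsilon$ on measure $>1-2\varepsilon$'' to exactly $\limsup=1$ and $\liminf=0$ almost everywhere. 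The paper itself shortcuts this by showing, via Erd\H{o}s's estimates for $\xi_N(k)$ and $\pi_N(k)$ on $I_N$, that the log-weighted operators coincide asymptotically with the Gaussian operators $T_N$ and then invoking \cref{prop: Operator SSO} from \cite{Loyd2021}, where that machinery has already been carried out. Your route could in principle avoid the Gaussian approximation (concentration alone suffices for a dJR-type argument), but as written the passage from the tower-local statement to $\mu$-a.e.\ $x$ is not there, and that is the heart of the theorem.
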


    Proving \cref{thm: Double Log Convergence,thm: Log Sweeping Out} utilizes input from both the number theoretic and dynamical points of view. Taking advantage of the repetition of \( \Omega(n) \), we convert the ergodic averages to weighted averages of the form
    \[
        \frac{1}{\sum_{k \leq N} w_N(k)} \sum_{k \leq N} w_N(k) f(T^k x) .
    \]
    Understanding these weight functions is closely tied to analyzing the distribution of the set of \(k \)-almost primes, those integers satisfying \( \Omega(n) = k \) for fixed \( k \) (see \cref{ss: NT Weights} for more details). Moreover, it is key to the proof of \cref{thm: Log Sweeping Out} that the corresponding weight functions are largely supported in relatively small intervals. Specifically, the weight functions satisfy
    \begin{equation}
    \label{eqn: Weight Support}
        \limsup_{N \to \infty} \frac{1}{\sum_{k \leq N} w_N(k)} \sum_{k \notin I_N} w_N(k) 
        \leq 
        \varepsilon
    \end{equation}
    for all \( \varepsilon > 0\), where \( I_N \) is an interval centered at \(\log\log N\) with length of order \( \sqrt{\log\log N}\). We leverage the support and shape of the weight functions to employ a dynamical argument due to del Junco and Rosenblatt \cite{dJR1979}. 

    In contrast, the weight functions appearing in the proof of \cref{thm: Double Log Convergence}, denoted \( \eta_N(k) \), fail to satisfy a condition as strong as Equation \eqref{eqn: Weight Support}. However, expanding the interval of support, a weaker estimate holds:
    \begin{equation}
    \label{eqn: eta weight support}
        \limsup_{N \to \infty} \frac{1}{\log \log N} \sum_{k \geq L_N} \eta_N(k)
        \leq 
        \varepsilon
    \end{equation}
    for all \( \varepsilon > 0\), where \( L_N \) denotes the right-hand endpoint of the interval \( I_N\). To prove \cref{thm: Double Log Convergence}, we first verify pointwise convergence for bounded functions using Equation \eqref{eqn: eta weight support} in combination with an approximation of the weight functions due to Gorodetsky, Lichtman, and Wong \cite{GLW2023}. We then prove the following maximal ergodic theorem for the desired weighted averages. 
    
    \begin{prop}
    \label{prop: MEI for MPS}
        There exists a \( C> 0 \) such that for any measure-preserving dynamical system \( (X, \mu, T) \), \( f \in L^1(\mu)\), and \( \lambda >0\), 
        \[
            \mu \Big\{ x \in X \,:\,  \sup_{N\in \N} \Big| \frac{1}{\log\log N} \sum_{k \leq L_N} \eta_N(k) f(T^{k}x) \Big| > \lambda \Big\} \leq \frac{C}{\lambda} \norm{f}_1. 
        \]
    \end{prop}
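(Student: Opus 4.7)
My plan is to reduce the weak-$(1,1)$ maximal inequality to the classical weak-$(1,1)$ bound for the Hardy--Littlewood--Wiener maximal function by establishing a pointwise domination of the form $\sup_N |A_N f(x)| \leq C\, M^\ast f(x)$, where $A_N f(x)$ denotes the averaging operator in the statement and $M^\ast f$ is the two-sided interval maximal function on the orbit of $x$ under $T$. The weak-$(1,1)$ bound for $M^\ast$ is standard via Calder\'on's transference principle and the Hardy--Littlewood inequality on $\Z$, so the entire difficulty lies in the pointwise domination.

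The key input should be a quantitative Gaussian-type upper bound on the kernel. Since $\eta_N$ is essentially a Poisson weight with parameter $\log\log N$ normalized to total mass $\sim \log\log N$, the Gorodetsky--Lichtman--Wong approximation should yield
\[
\eta_N(k) \leq C\sqrt{\log\log N}\,\exp\!\bigl(-c(k - \log\log N)^2/\log\log N\bigr)
\]
uniformly in $k \leq L_N$ for $N$ large. This is the heart of the argument.

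Granted this bound, I would decompose $\{1,\ldots,L_N\}$ into the concentration interval $I_N$ of length $\sim \sqrt{\log\log N}$ centered at $\log\log N$, together with dyadic shells $J_{N,j}$ at distance $\sim 2^{j-1}\sqrt{\log\log N}$ from the center. On $I_N$ the bound gives $\eta_N(k)/\log\log N \leq C/|I_N|$, so the contribution to $A_N f$ is bounded by $C\, M^\ast f(x)$. On each shell $J_{N,j}$ the Gaussian tail supplies an additional factor $e^{-c\, 4^{j-1}}$, leading to a bound $C\, 2^{j-1} e^{-c\, 4^{j-1}} M^\ast f(x)$; summing in $j$ converges and produces the desired uniform pointwise estimate $\sup_N |A_N f(x)| \leq C' M^\ast f(x)$. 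Applying the weak-$(1,1)$ inequality for $M^\ast$ then completes the proof.

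The principal obstacle is the first step: promoting the Gorodetsky--Lichtman--Wong asymptotic formulas, which are sharpest near the peak $k = \log\log N$, into a uniform pointwise Gaussian-type bound valid across the whole range $k \leq L_N$, and in particular for small $k$ where the Poisson approximation is most delicate. If the uniform bound degrades in the far left tail, one can excise the extreme tail at the cost of an $\varepsilon$ loss in the constant by invoking an analogue of \eqref{eqn: eta weight support}, and apply the dyadic shell scheme only to the bulk; this still yields the weak-$(1,1)$ conclusion.
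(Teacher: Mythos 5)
Your overall strategy---dominate the operator pointwise by the Hardy--Littlewood maximal function on the orbit and invoke transference---is viable and in fact close in spirit to what the paper does. But the key step, the claimed Gaussian bound
\[
\eta_N(k) \leq C\sqrt{\log\log N}\,\exp\!\bigl(-c(k - \log\log N)^2/\log\log N\bigr),
\]
is false, and this is a genuine gap rather than a technicality. The function $\eta_N(k)$ is \emph{not} Gaussian-concentrated around $\log\log N$ with width $\sqrt{\log\log N}$: for any fixed $k$, $\eta_N(k)\to\eta(k)$ as $N\to\infty$, and by \cref{lem: GLM Estimate} $\eta(k)\to 1$. Thus $\eta_N(k)$ is of order $1$ across essentially the whole range $O(1)\leq k\leq \log\log N$, not just on an interval of length $\sqrt{\log\log N}$. (The Gaussian shape with width $\sqrt{\log\log N}$ is exhibited by $\pi_N(k)/N$ and by $\xi_N(k)/\log N$, which is precisely what drives the strong-sweeping-out results in \cref{sec: Proof Log SSO}; the whole point of the double-log averages is that $\eta_N$ does \emph{not} have this concentrated shape.) The remark following \cref{lem: Eta weight distribution} makes this explicit: there is non-negligible mass of $\eta_N$ in $[1,\log\log N - C\sqrt{\log\log N}]$, and if $\eta_N$ were concentrated as you posit, $\Omega(n)$ would be inherently strong sweeping out, contradicting \cref{thm: Double Log Convergence}. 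Your proposed fallback---excise the far left tail at an $\varepsilon$ cost---fails for the same reason: the ``far left tail'' is essentially all of $[1,\log\log N]$ and carries a constant fraction of the total mass, not an $\varepsilon$ fraction.

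The irony is that the correct pointwise domination is far simpler than what you attempted, and you do not need the Gaussian decay or the dyadic-shell decomposition at all. By \cref{lem: Erdos Eta}, $\eta_N(k)\leq\eta(k)\leq\eta(1)$ uniformly in $k$ and $N$. Since $L_N=2\log\log N$, this gives
\[
\frac{1}{\log\log N}\sum_{k\leq L_N}\eta_N(k)\,|f|(T^k x)
\leq 2\eta(1)\cdot\frac{1}{L_N}\sum_{k\leq L_N}|f|(T^k x)
\leq 2\eta(1)\,M^\ast f(x),
\]
and the weak-$(1,1)$ bound for $M^\ast$ finishes. The paper proves essentially this, but packages it as a self-contained Vitali covering argument over $\mathbb{Z}$ (\cref{prop: MEI for loglog}) followed by Calder\'on transference, so the only input about the kernel is the uniform bound $\eta_N(k)\leq\eta(1)$; no asymptotic information on the shape of $\eta_N$ is used. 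If you replace your Gaussian lemma with this one-line uniform bound and drop the dyadic shells, your argument becomes correct and coincides in substance with the paper's.
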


    In \cref{sec: Proof Perturbations SSO}, we prove that the strong sweeping out property for slow growing sequences persists under perturbations.  

    \begin{theorem}
    \label{thm: Perturbations of SSO}
        Let \((b_n) \sse \N\) be monotonically increasing to infinity. Let \((a_n)\) be a perturbation of the sequence \((b_n)\), in the following sense. Define \( p_n := \max_{j \leq n} |a_j - b_j|\). Suppose
        \begin{enumerate}
            \item \(b_n = o(n^s) \) for all \( s > 0 \); 
            \item \( p_n = o(b_n) \).
        \end{enumerate}
        Then \( (a_n) \) has the strong sweeping out property.
    \end{theorem}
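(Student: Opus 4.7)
The approach is to establish strong sweeping out for $(a_n)$ by a direct Rokhlin-tower construction in the spirit of the del Junco--Rosenblatt technique already invoked earlier in the paper, using that the hypotheses force the range of $(a_n)$ to grow subpolynomially.

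The first observation is that the range $R_N := \{a_n : 1 \leq n \leq N\}$ satisfies $|R_N| = (1+o(1)) b_N = o(N^s)$ for every $s > 0$. Indeed, monotonicity and integrality of $(b_n)$ give $|\{b_n : n \leq N\}| \leq b_N + 1$, and the perturbation bound $|a_n - b_n| \leq p_n \leq p_N$ enlarges this by at most $2 p_N = o(b_N)$.

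Next, fix $\varepsilon > 0$ and a non-atomic ergodic system $(X, \mathcal{B}, \mu, T)$. The witnessing set is built diagonally along a rapidly increasing sequence $N_j \to \infty$. At each scale, form a Rokhlin tower of height $H_j \gg b_{N_j}$ with base $B_j$ of measure $\approx 1/H_j$ and disjoint levels $B_j, T B_j, \ldots, T^{H_j - 1} B_j$ covering all but $2^{-j}$ of $X$, and define
\[
A_j := \bigcup_{v \in R_{N_j} + P_j} T^v B_j,
\]
where $P_j \sse [0, H_j]$ is a shift set to be chosen. Then $\mu(A_j) \leq |R_{N_j} + P_j|/H_j$, and for $x \in T^\ell B_j$ with $-\ell \in P_j$ one has $T^{a_n} x \in A_j$ for every $n \leq N_j$, forcing the Ces\`aro average at $N_j$ to equal $1$ on a set of measure $|P_j|/H_j$. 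Choosing $H_j$ and $P_j$ so that $|P_j|/H_j$ is bounded below (for coverage at scale $j$) while $|R_{N_j} + P_j|/H_j < 2^{-j}\varepsilon$ (for smallness), one obtains $A := \bigcup_j A_j$ of measure $< \varepsilon$ for which a Borel--Cantelli argument yields $\limsup_{N\to\infty} N^{-1}\sum_{n \leq N} \mathbbm{1}_A(T^{a_n}x) = 1$ for $\mu$-almost every $x$. The $\liminf = 0$ statement follows from a symmetric construction, since $R_{N_j}$ occupies only an $o(1)$-fraction of $[0, H_j]$, leaving ample room for complementary tower levels.

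The main obstacle is the combinatorial design of the shift sets $P_j$: simultaneously ensuring that $|P_j|$ is large relative to $H_j$ and that the sumset $R_{N_j} + P_j$ remains small. This is where the structural hypotheses on $(a_n)$ enter essentially -- monotonicity and slow growth of $(b_n)$ imply $R_{N_j}$ has a clustered, near-interval structure -- and where the perturbation bound $p_n = o(b_n)$ is crucial, preserving this structure in $R_{N_j}$ and enabling the required sumset control across all scales.
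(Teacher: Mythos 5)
Your proposed construction is internally inconsistent, and the missing ingredient is precisely the one the paper's argument hinges on. You ask to choose $H_j$ and $P_j$ so that simultaneously $|P_j|/H_j$ is bounded below (for coverage) and $|R_{N_j}+P_j|/H_j < 2^{-j}\varepsilon$ (for smallness of $A_j$). But fixing any $a \in R_{N_j}$, the translate $a + P_j$ sits inside $R_{N_j}+P_j$, so $|R_{N_j}+P_j| \geq |P_j|$; hence $|R_{N_j}+P_j|/H_j \geq |P_j|/H_j \geq c$, and the two requirements cannot both hold once $2^{-j}\varepsilon < c$. More fundamentally, the set on which your Ces\`aro average at time $N_j$ equals $1$ is contained in $T^{-a_1}A_j$, so its measure is at most $\mu(A_j)$; with $\mu(A) = \mu(\cup_j A_j) < \varepsilon$ you cannot get enough measure at each scale for any Borel--Cantelli argument to yield $\limsup = 1$ almost everywhere. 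Using a single $N_j$ per scale, as you do, cannot resolve this tension, and acknowledging in the final paragraph that "the main obstacle is the combinatorial design of the shift sets $P_j$" is accurate but leaves the gap unfilled.

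The idea that unblocks this -- and it is the crux of the paper's proof -- is to exploit the \emph{supremum over a range of times} at each scale rather than a single $N_j$. The paper's \cref{prop: SSO Interval Condition} isolates exactly the combinatorial condition needed: find $N_1,\dots,N_r$ at the given scale and intervals $J_1,\dots,J_r$ with most of $\{a_n : n \leq N_i\}$ landing in $J_i$, and crucially $|\cup_i J_i| / \max_i |J_i| > C$. One then disproves the maximal ergodic inequality on a periodic system with $E = [-M,M]$, $M = \max_i |J_i|$, since for $k$ in any $-J_i$ the average at time $N_i$ is large -- and the set of such $k$, namely $\cup_i(-J_i)$, has size proportional to $|\cup J_i|$, which can dwarf $|E| \approx 2M$. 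The transference lemma (\cref{lem: Transference}) then ports the failure of the maximal inequality to an arbitrary non-atomic system. Verifying the ratio condition for a perturbed slow-growing sequence is where both hypotheses of the theorem enter quantitatively: taking $N_i = R^{K_0+i}$, the interval $J_i = [b_{N_{i-1}}-p_{N_r}, b_{N_i}+p_{N_r}]$ captures most of $\{a_n : n\leq N_i\}$ because $p_n = o(b_n)$ keeps the perturbation inside it, and the subpolynomial growth of $b_n$ forces $b_{N_i} - b_{N_{i-1}}$ to be a vanishing fraction of $b_{N_i}$, which is what makes the ratio $|\cup J_i|/\max|J_i|$ arbitrarily large. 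Your observation that $|R_N| = o(N^s)$ is correct and in the right spirit, but it alone does not yield a working tower construction; you need the sup-over-a-range mechanism (or an equivalent), which your outline does not supply.
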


    \begin{remark}
        The proof of \cref{thm: Perturbations of SSO} demonstrates that the definition of \( p_n \) can be weakened to taking the maximum over integers excluding a fixed set of small natural density. 
    \end{remark}
    
    Increasing sequences of subpolynomial growth, i.e. those satisfying condition (1), are already known to be strong sweeping out (see for instance \cite{JW1994} or \cref{prop: JW SSO Condition}). \cref{thm: Perturbations of SSO} generalizes this result to include, not necessarily increasing, perturbations of such sequences. In particular, \cref{thm: Perturbations of SSO} implies \cref{thm: Pointwise Nonconvergence}. The classical Erd\H{o}s-Kac Theorem can be thought of as a central limit theorem for \( \Omega(n) \), stating, roughly speaking, that the number of prime factors of an integer \(n \leq N\) becomes normally distributed with mean and variance \( \log \log n \). In the context of \cref{thm: Perturbations of SSO}, the Erd\H{o}s-Kac Theorem implies that \( \Omega(n) \) is a sufficiently small perturbation of the strong sweeping out sequence \( \floor{\log \log n }\).

    \cref{thm: Perturbations of SSO} also widens the known class of strong sweeping out sequences. Many additive functions are known to possess a central limit theorem (see for instance \cite{Billingsley74}). \cref{thm: Perturbations of SSO} guarantees that those with subpolynomial mean and small enough variance have the strong sweeping out property. In fact, the following weaker condition on the normal of order of the sequence is sufficient to guarantee the strong sweeping out property. 

    \begin{corollary}
    \label{cor: CLT SSO}
        Let \( a(n) \sse \N \). Suppose there exist increasing subpolynomial functions \( b(n) \) and \( p(n) \) satisfying \(p(n) = o(b(n))\) and 
        \[
            \limsup_{N \to \infty} \frac{1}{N} \#\big\{ n \leq N \,:\, |a(n) - b(n)| > p(n)\big\} < \varepsilon
        \]
        for all \( \varepsilon > 0\). Then \( a(n) \) satisfies the strong sweeping out property.
    \end{corollary}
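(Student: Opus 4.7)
The plan is to derive Corollary 1.5 as a direct consequence of Theorem 1.4, invoking the strengthening noted in the Remark following it, which allows the maximum defining $p_n$ to be taken over integers outside a fixed exceptional set of small natural density.

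Fix $\varepsilon > 0$, and let $b, p$ be the increasing subpolynomial functions with $p(n) = o(b(n))$ supplied by the hypothesis. Consider the exceptional set
\[
    E := \{n \in \N : |a(n) - b(n)| > p(n)\},
\]
whose upper natural density is strictly less than $\varepsilon$ by assumption. For every $n \notin E$ one has $|a(n) - b(n)| \leq p(n)$, and since $p$ is increasing, taking the maximum over $j \leq n$ with $j \notin E$ yields
\[
    \widetilde{p}_n := \max_{j \leq n,\; j \notin E} |a(j) - b(j)| \;\leq\; p(n) \;=\; o(b(n)).
\]
This is precisely the weakened form of hypothesis (2) of Theorem 1.4 relative to the exceptional set $E$. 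Hypothesis (1), namely $b(n) = o(n^s)$ for every $s > 0$, follows from the subpolynomial growth of $b$, and monotonicity of $b$ is given.

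Applying the strengthened version of Theorem 1.4 supplied by the Remark then delivers the strong sweeping out property for $(a(n))$. I expect the only substantive point to be matching the corollary's density-type hypothesis with the permissible exclusion set from the Remark; beyond this translation, no new dynamical or number-theoretic input should be required, as the argument reduces mechanically to the content already established in the proof of Theorem 1.4.
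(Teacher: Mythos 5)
Your proposal is correct and follows essentially the same route as the paper: take the bad set of density less than $\varepsilon$ as the excluded set allowed by the Remark after Theorem \ref{thm: Perturbations of SSO}, use monotonicity of $p$ to bound the restricted maximum $\max_{j \leq n,\, j \notin E}|a(j)-b(j)| \leq p(n) = o(b(n))$, and invoke the strengthened form of Theorem \ref{thm: Perturbations of SSO}. The paper's proof does exactly this, merely phrasing it with the complementary (good) set $E$ and the quantity $p_{N,E}$.
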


    \begin{corollary}
    \label{cor: Divisor and little omega}
        Let \( \omega(n) \) denote the number of distinct prime factors of \( n\) and \( d(n) \) the number of divisors of \( n\). Then the sequences \( \omega(n) \) and \( \log d(n) \) have the strong sweeping out property. 
    \end{corollary}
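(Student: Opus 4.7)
The plan is to deduce both cases of \cref{cor: Divisor and little omega} from \cref{cor: CLT SSO} by exhibiting, for each of the two sequences, a subpolynomial increasing comparison sequence $b(n)$ together with an admissible error function $p(n) = o(b(n))$ such that the tail density $\frac{1}{N}\#\{n \leq N : |a(n) - b(n)| > p(n)\}$ can be made arbitrarily small. The natural candidate in both cases is $b(n) \approx c \log\log n$ for a suitable constant $c$, which is clearly subpolynomial.

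For $\omega(n)$, I would take $b(n) = \floor{\log\log n}$ and $p(n) = \floor{(\log\log n)^{2/3}}$. Both are increasing (for $n$ sufficiently large, with obvious truncation at small $n$) and subpolynomial, and plainly $p(n) = o(b(n))$. The density condition follows from the classical Hardy--Ramanujan--Tur\'an estimate
\[
    \sum_{n \leq N} \big(\omega(n) - \log \log n\big)^2 = O(N \log \log N),
\]
which via Chebyshev's inequality yields
\[
    \frac{1}{N} \#\big\{ n \leq N : |\omega(n) - \log\log n| > (\log\log n)^{2/3} \big\}
    \ll (\log\log N)^{-1/3} = o(1),
\]
so in particular the limsup is $0 < \varepsilon$ for every $\varepsilon > 0$. \cref{cor: CLT SSO} then delivers the strong sweeping out property for $\omega(n)$.

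For $\log d(n)$, the point is that $\log d(n) = \sum_{p^k \| n} \log(k+1)$ is an additive function whose ``main term'' comes from primes with $k=1$, giving $\log d(n) = \log 2 \cdot \omega(n) + E(n)$ where $E(n) = \sum_{p^k \| n,\ k \geq 2} \log(k+1) - \log 2$; a short computation (or the classical normal-order result for $\log d(n)$) shows $E(n) = O(\log\log\log n)$ on a set of density one. Combined with the previous paragraph, this gives $|\log d(n) - \log 2 \cdot \log\log n| \leq (\log\log n)^{2/3}$ off a density-zero set. Taking $b(n) = \floor{\log 2 \cdot \log\log n}$ and $p(n) = \floor{(\log\log n)^{3/4}}$, and then rounding $\log d(n)$ to the nearest integer (which changes it by at most $1/2$, absorbable into $p(n)$; the hypothesis $a(n) \subseteq \N$ in \cref{cor: CLT SSO} is harmless since the sweeping-out conclusion depends only on the integer part used in the iterates $T^{a(n)}$), \cref{cor: CLT SSO} again applies.

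The only genuine obstacle is the arithmetic input: one must confirm that the classical normal-order theorems for $\omega$ and $\log d$ give a density estimate that is not merely $o(1)$ but actually strong enough to lie below any prescribed $\varepsilon$, which the Tur\'an-type second moment bound provides cleanly. Everything else — monotonicity of the comparison sequences, subpolynomial growth, and the $p(n) = o(b(n))$ requirement — is immediate from the choices $b(n) \asymp \log\log n$ and $p(n) \asymp (\log\log n)^{\alpha}$ for any $\tfrac{1}{2} < \alpha < 1$.
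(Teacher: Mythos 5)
Your proposal is correct but follows a genuinely different route from the paper's. The paper deduces both cases directly from central limit theorems of Billingsley \cite{Billingsley74} for $\omega(n)$ and for $\log d(n)$, taking $b(n) = \log\log n$ (resp.\ $\log 2 \cdot \log\log n$) and $p(n) = \sqrt{\log\log n}$ (resp.\ $\log 2 \cdot \sqrt{\log\log n}$); strictly speaking, to satisfy the ``for all $\varepsilon$'' clause in \cref{cor: CLT SSO} one must then let the constant in front of $\sqrt{\log\log n}$ grow with $1/\varepsilon$. Your approach replaces the CLT input for $\omega(n)$ with the more elementary Tur\'an second-moment estimate $\sum_{n\le N}(\omega(n)-\log\log n)^2 = O(N\log\log N)$ and Chebyshev, and by choosing the slightly larger $p(n)=(\log\log n)^{2/3}$ you arrange the exceptional density to be $o(1)$ outright, so one fixed choice of $p$ works for every $\varepsilon$ simultaneously --- a cleaner match for the hypothesis of \cref{cor: CLT SSO}. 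For $\log d(n)$, rather than invoking a separate CLT you decompose $\log d(n) = \log 2\cdot \omega(n) + E(n)$ and bound the error $E(n)$ using the classical fact that $\Omega(n)-\omega(n)$ (and hence the contribution of repeated prime factors) is bounded off a set of arbitrarily small density; this bootstraps the second case off the first, at the cost of one extra lemma about $E(n)$ that deserves a line of justification (e.g.\ that $\sum_p 1/p(p-1)<\infty$ gives a bounded mean for $\Omega-\omega$). You also explicitly handle the integer-valuedness of $\log d(n)$ by rounding and absorbing the $O(1)$ error into $p(n)$, which the paper passes over in silence. In short: the paper leans on the sharper probabilistic input (Erd\H{o}s--Kac-type CLTs) and applies \cref{cor: CLT SSO} twice; you trade that for an elementary variance bound plus a reduction of $\log d$ to $\omega$, getting a self-contained argument at the price of two additional classical lemmas.
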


\section{Background and Notation}
\label{sec: Background}
    \subsection{Measure-preserving systems}
    \label{ss: MPS}
        By a \textit{topological dynamical system}, we mean a pair \((X,T)\), where \(X\) is a compact metric space and \(T\) a homeomorphism of \(X\). A Borel probability measure \(\mu\) on \(X\) is called \textit{T-invariant} if \(\mu(T^{-1}A) = \mu(A)\) for all measurable sets \(A\). By the Bogolyubov-Krylov theorem (see for instance \cite[Corollary 6.9.1]{Walters1982}), every topological dynamical system has at least one \(T\)-invariant measure. When a topological system \((X, T)\) admits only one such measure, \((X, T)\) is called \textit{uniquely ergodic}. 
    
        By a \textit{measure-preserving dynamical system}, we mean a probability space \((X, \mathcal{B}, \mu)\), where \(X\) is a compact metric space and \(\mathcal{B}\) the Borel \(\sigma\)-algebra on \(X\), and \( \mu \) a Borel probability measure, accompanied by a measure-preserving transformation \(T: X \to X\). We often omit the \(\sigma\)-algebra \(\mathcal{B}\) when there is no ambiguity. A measure-preserving dynamical system is called \textit{ergodic} if for any \(A \in \mathcal{B}\) such that \(T^{-1}A = A\), one has \(\mu(A) = 0\) or \(\mu(A) = 1\).   
        
        One of the most fundamental results in ergodic theory is the Birkhoff Pointwise Ergodic Theorem, which states that for any ergodic system \((X, \mu, T)\) and \(f \in L^1(\mu)\),
        \[
            \lim_{N \to \infty} \AVG f(T^n x) = \int f \, d\mu
        \]  
        for \(\mu\)-almost every \(x \in X\).

    \subsection{Weighted Averaging}
    \label{subsec: Weighted Averages}
        For \( N \in \N \), let \( [N] \) denote the set \( \{1, \dots, N\} \). Given \( a: \N \to \C \) and \(w: \N \to \C \), define the \textit{Ces\`aro averages of} \( a\) by
        \[
            \EC{} a(n) := \AVG a(n)
        \]
        and \(w\)-\textit{weighted averages of} \( a\) by
        \[
            \ECw a(n) := \frac{1}{W(N)} \sum_{n \leq N} w(n) a(n),
        \] 
        where \( W(N) := \sum_{n \leq N} w(n) \). Most frequently, we take \( w(n) = 1/n \) or \( 1/n \log n \), in which case \( W(N) = \log N \) and \( \log \log N \), respectively. The former are called the \textit{logarithmic averages of} \(a\), denoted \( \Elog a(n) \), and the latter the \textit{double-logarithmic averages of} \(a\), denoted \( \Ellog a(n) \). Despite the slight abuse of notation, whether the superscript is taken to be \(w\) or \(W\) is clear from context. 

        Under mild assumptions, convergence of Ces\`aro averages implies the convergence of other weighted averages. In this sense, we consider Ces\`aro convergence to be a strong notion of convergence. 
        
        \begin{lemma}
        \label{lem: Weaker Weights Proof}
            Let \( w: \N \to \C \) be a weight function satisfying
            \begin{enumerate}
                \item \( w(n) \) is non-negative and non-increasing;
                \item \( \lim_{N \to \infty} W(N) = \infty \),
            \end{enumerate}
            where \( W(n) = \sum_{n \leq N} w(n) \). If \( a: \N \to \C \) satisfies
            \(
                \lim_{N\to\infty} \EC{}a(n) = L
            \)
            for some \( L \in \C\), then
            \(
                \lim_{N \to \infty} \ECw \, a(n) = L . 
            \)
        \end{lemma}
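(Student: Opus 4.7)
The plan is to use Abel summation (summation by parts) to express the weighted partial sums in terms of Cesàro partial sums, which by hypothesis we control. First I would shift by $L$: setting $b(n) := a(n) - L$, it suffices to prove $\ECw b(n) \to 0$ whenever $B_N := \sum_{n \leq N} b(n) = o(N)$, since the constant $L$ contributes exactly $L$ to the weighted average once the hypothesis $W(N) \to \infty$ is in place.

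Applying Abel summation gives
$$
\sum_{n \leq N} w(n)\, b(n) \;=\; w(N)\, B_N + \sum_{n=1}^{N-1} B_n \bigl( w(n) - w(n+1) \bigr),
$$
with each factor $w(n) - w(n+1) \geq 0$ by condition (1). The boundary term is controlled using the monotonicity estimate $N\, w(N) \leq 2\, W(N)$, which follows from $W(N) \geq \sum_{N/2 < n \leq N} w(n) \geq (N/2)\, w(N)$; this yields $|w(N) B_N|/W(N) \leq 2|B_N|/N \to 0$.

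For the remaining sum, fix $\varepsilon > 0$ and pick $N_0$ so that $|B_n| \leq \varepsilon n$ for all $n \geq N_0$. The head $\sum_{n < N_0} B_n (w(n) - w(n+1))$ is bounded in absolute value by a constant depending only on $N_0$ and $w(1)$, which vanishes upon dividing by $W(N) \to \infty$. The tail is bounded by $\varepsilon \sum_{n=N_0}^{N-1} n \bigl( w(n) - w(n+1) \bigr)$, and a standard telescoping identity gives
$$
\sum_{n=N_0}^{N-1} n\bigl(w(n) - w(n+1)\bigr) \;=\; N_0\, w(N_0) - N\, w(N) + \sum_{n=N_0+1}^{N} w(n) \;\leq\; N_0\, w(N_0) + W(N).
$$
Combining the three pieces shows $\limsup_N |\ECw b(n)| \leq 3\varepsilon$, and letting $\varepsilon \to 0$ concludes the proof.

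Since this is a classical Abel summation computation, I do not expect a genuine obstacle; the two technical points deserving attention are the monotonicity estimate $N\, w(N) \leq 2\, W(N)$ (which makes the boundary term harmless) and the telescoping cancellation that keeps the main piece of the tail comparable to $\varepsilon W(N)$ rather than something larger.
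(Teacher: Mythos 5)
Your proof is correct and takes essentially the same route as the paper's: both are Abel summation arguments that convert the weighted sum into a nonnegative combination of Cesàro averages via the differences $w(n)-w(n+1)$, then split into a finite head (killed by $W(N)\to\infty$) and a tail controlled by the telescoping identity $\sum_{n\le N-1} n\bigl(w(n)-w(n+1)\bigr) + Nw(N) = W(N)$. The only cosmetic difference is that you first shift $a$ by $L$ and then need the separate monotonicity estimate $Nw(N)\le 2W(N)$ for the boundary term, whereas the paper keeps $L$ in place and handles the boundary term $Nw(N)$ uniformly with the other weights via that same identity; both variants are standard and sound.
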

        \begin{proof}
            Let \(\varepsilon > 0\). By a telescoping argument,  
            \begin{align}
            \label{eqn: Telescoping Weights1}
                \notag 
                \ECw a(n)  
                &= 
                \frac{1}{W(N)} \sum_{n \leq N} w(n) a(n) \\ 
                &= 
                \notag 
                \frac{1}{W(N)} \Bigg[ w(N) \sum_{n \leq N} a(n) +  \sum_{m \leq N-1} \big(w(m) - w(m+1)\big) \sum_{n \leq m} a(n) \Bigg]
                \\
                &=
                \frac{1}{W(N)} \Bigg[ N w(N) \Bigg( \frac{1}{N}\sum_{n \leq N} a(n) \Bigg) + \sum_{m \leq N-1} m \big(w(m) - w(m+1)\big) \Bigg(\frac{1}{m}\sum_{n \leq m} a(n) \Bigg) \Bigg].
            \end{align}
            Set \( \widetilde{w}(n) = n \big( w(n) - w(n+1) \big) \) for \( n \leq N-1 \) and \( \widetilde{w}(N) = N w(N) \). Notice that 
            \begin{equation}
            \label{eqn: Telescoping Weights2}
                W(N) = \sum_{n \leq N} w(n) = \sum_{n \leq N} \widetilde{w}(n).
            \end{equation}
            
            By assumption, there is some \( N_0 \) such that for \( m \geq N_0 \), 
            
            \begin{equation*}
                \Big| \frac{1}{m} \sum_{n \leq m} a(n) - L \Big| \leq \varepsilon.
            \end{equation*}
            Hence Equation \eqref{eqn: Telescoping Weights1} yields
            \begin{align*}
                \Big| \ECw a(n) - L \Big| 
                &\leq 
                \frac{\widetilde{w}(N)}{W(N)} \Big | \AVG a(n) - L \Big | 
                +  
                \frac{1}{W(N)}  \sum_{m \leq N-1} \widetilde{w}(m) \Big | \frac{1}{m}\sum_{n \leq m} a(n) - L \Big|
                \\
                &\leq
                \frac{\varepsilon \widetilde{w}(N)}{W(N)}
                + 
                \frac{\varepsilon }{W(N)} \sum_{m \geq N_0}  \widetilde{w}(m) 
                + 
                \frac{1}{W(N)} \sum_{m \leq N_0-1} \widetilde{w}(m) \Big|\frac{1}{m}\sum_{n \leq m} a(n) - L \Big|
                \\
                & \leq
                \frac{\varepsilon}{W(N)} \sum_{m \leq N} \widetilde{w}(m) + \frac{C_{\varepsilon}}{W(N)}
                \\
                & \leq 
                \varepsilon + \frac{C_\varepsilon}{W(N)},
            \end{align*}
            where the last inequality follows from Equation \eqref{eqn: Telescoping Weights2}. Taking \( N \) \large large enough so that \(\frac{C_\varepsilon}{W(N)} < \varepsilon\), we are done.
        \end{proof}

    \subsection{The Strong Sweeping Out Property}
    \label{ss: SSO}
        We introduce definitions, examples, and techniques related to the strong sweeping out property.

        \begin{defn}
            A sequence \( \{a_n\} \subset \N \) is called \textit{universally bad for} \(L^p\) if for any non-atomic ergodic system \( (X, \mu, T) \) and \( f \in L^p(\mu) \), the averages 
            \(
               \EC{} f(T^{a_n} x)
            \)
            diverge almost everywhere. 
        \end{defn}

        Krengel \cite{Krengel1971} proved the existence of a strictly increasing subsequence \((k_n) \) that is universally bad for \( L^p\), \( 1 \leq p \leq \infty \). In fact, he proves that \((k_n)\) satisfies a stronger notion of non-convergence, the strong sweeping out property. 

        \begin{defn}
            Let \( w: \N \to \C \) be a weight function. A sequence \( (a_n) \) has the \textit{strong sweeping out property for w-averages} if for any non-atomic ergodic system \( (X, \mathcal{B}, \mu, T) \) and \( \varepsilon > 0\), there is an \( A \in \mathcal{B} \) satisfying \( \mu(A) < \varepsilon\) and
            \[
                \limsup_{N \to \infty} \ECw \mathbbm{1}_A(T^{a_n}x) = 1
                \quad\quad \text{ and } \quad\quad 
                \liminf_{N \to \infty} \ECw \mathbbm{1}_A(T^{a_n} x) = 0
            \]
            for \(\mu\)-almost every \( x \in X\). When \( (a_n) \) is strong sweeping out for Ces\`aro averages, we simply say \( (a_n) \) has the \textit{strong sweeping out property}, or \( (a_n) \) is \textit{strong sweeping out}. 
        \end{defn} 

        Examples of strong sweeping out sequences include all lacunary sequences \cite{Akcetal1996}. Lacunary sequences are those satisfying \( a_{n+1}/a_n \geq 1 + \eta \) for some fixed \( \eta > 0\). Weakening this growth condition, sequences satisfying 
        \[
            \frac{a_{n+1}}{a_n} \geq 1 + \frac{1}{(\log \log n)^{1-\eta}}
        \]
        are also known to be strong sweeping out \cite{MRW2023}.

        Jones and Wierdl \cite[Theorem 2.16]{JW1994} give a general criterion for determining whether an increasing sequence has the strong sweeping out property (for Ces\`aro averages). 
        
        \begin{prop}[\cite{JW1994}]
        \label{prop: JW SSO Condition}
            Let \( (a_n) \) be an increasing sequence of positive integers. For \( 0 < \varepsilon < 1 \) , and \( 0 < p < q \), define 
            \[
            \varphi_\varepsilon(p, q) = \max_{p \leq n \leq q} \{a_n - a_{\floor{\varepsilon n}} \}.
            \]
            Suppose that for every \( 0 < \varepsilon < 1 \), \( u > 0 \), and \( C > 0 \), there are \( p = p(\varepsilon, u, C) \) and \( q = q(\varepsilon, u, C) \) such that \( u \leq p \), \( u < q \), and
            \[
                \frac{a_q - q_p}{\varphi_\varepsilon(p,q)} > C.
            \]
            Then \( (a_n) \) has the strong sweeping out property.
        \end{prop}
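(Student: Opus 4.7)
My plan is to verify the strong sweeping out property by reducing to a concrete model via a Rokhlin--Halmos transference and then exploiting the spread hypothesis to engineer a set of small measure whose averages oscillate between $0$ and $1$. Using the Rokhlin--Halmos lemma, for each truncation level $N_0$ it suffices to verify the oscillation statements in a single model system---say an irrational rotation on $\T$ or the shift on $(\{0,1\}^\Z, \nu)$---since orbit segments of length $a_{N_0}$ embed into any non-atomic ergodic system with arbitrarily small error, and $\limsup$ and $\liminf$ of finite averages transfer through.

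I would then iterate the hypothesis to produce parameters $(p_k, q_k, \varepsilon_k, C_k)$ with $\varepsilon_k \to 0$, $C_k \to \infty$, and widely separated scales $p_{k+1} \gg q_k$, satisfying $a_{q_k} - a_{p_k} > C_k\,\varphi_{\varepsilon_k}(p_k, q_k)$. Writing $r_k := \varphi_{\varepsilon_k}(p_k, q_k)$ and $R_k := a_{q_k} - a_{p_k}$, the definition of $\varphi_{\varepsilon_k}$ says that for every $m \in [p_k, q_k]$ the block $\{a_n : \lfloor \varepsilon_k m\rfloor \leq n \leq m\}$ is trapped inside some interval of length $r_k$, while the full image over $[p_k, q_k]$ spans length $R_k \gg r_k$. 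I would take $A = \bigsqcup_k A_k$, where each $A_k$ is a union of evenly spaced ``slabs'' of width $\asymp r_k$ placed inside a host interval of length $R_k$ in the model system, with $\mu(A_k) = \varepsilon/2^{k+1}$ so that $\mu(A) < \varepsilon$.

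For oscillation, I would examine the scales $N = q_k$ and $N = p_{k+1}$. At $N = q_k$, the bulk of indices lie in $[\lfloor \varepsilon_k q_k\rfloor, q_k]$, and by a counting argument over slab placements a positive proportion of base points $x$ have the entire orbit block $\{T^{a_n}x\}$ inside a single slab of $A_k$---forcing the average above $1 - 2\varepsilon_k$---while another positive proportion land entirely in the complement, forcing it below $2\varepsilon_k$. At $N = p_{k+1}$ the dilution by intervening ``empty'' indices resets the average, and the complementary arrangement at scale $k+1$ produces the opposite behavior. A Borel--Cantelli argument along an ergodic orbit then upgrades these events to $\limsup = 1$ and $\liminf = 0$ almost everywhere.

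The main obstacle is synchronizing three scales---the tower height, the local spread $r_k$, and the total spread $R_k$---while keeping $\mu(A) < \varepsilon$ and ensuring that contributions from one scale do not drown out the target behavior at another. The freedom to take $C_k$ arbitrarily large in the hypothesis is precisely the lever that decouples $r_k$ from $R_k$ and forces clean oscillation on scale $k$ without contamination from neighboring scales; controlling that cross-scale contamination is where the hardest bookkeeping lies.
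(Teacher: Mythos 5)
The paper itself does not prove this proposition (it is quoted from Jones--Wierdl), so your proposal has to stand on its own; the closest thing in the paper is the proof of \cref{prop: SSO Interval Condition}, which exhibits the mechanism that your sketch is missing. The genuine gap is in your central counting step. At the single time $N=q_k$, with $A_k$ a union of slabs of width $\asymp r_k$ and total measure $\varepsilon 2^{-(k+1)}$, the set of base points $x$ for which the whole top block $\{a_n:\lfloor\varepsilon_k q_k\rfloor\le n\le q_k\}$ (which lies in an interval of length $\le r_k$) lands inside one slab has measure comparable to $\mu(A_k)$, not a fixed positive proportion of the space: each slab of width $\asymp r_k$ contributes only $\asymp r_k$ worth of good shifts. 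These measures are summable in $k$, so the easy direction of Borel--Cantelli says that almost every $x$ belongs to only \emph{finitely} many of your good sets --- the opposite of what you need --- and there is no independence available to run the divergence direction. Nor can you fix this by making the proportion of good points close to $1$ at a single time, since that forces the slabs to occupy a definite fraction of the space and destroys $\mu(A)<\varepsilon$. So the plan as written does not yield $\limsup=1$ almost everywhere; likewise ``dilution resets the average'' is not an argument for $\liminf=0$.

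The hypothesis is used differently in the actual argument. One fixes a \emph{single} small set $E$, essentially one interval of length $\asymp\varphi_\varepsilon(p,q)$, and exploits \emph{many} times $N\in[p,q]$ rather than one time per scale: for each such $N$ the block $\{a_n:\lfloor\varepsilon N\rfloor\le n\le N\}$, which carries a $(1-\varepsilon)$-fraction of the indices up to $N$, sits in an interval of length $\le\varphi_\varepsilon(p,q)$, and as $N$ runs over $[p,q]$ these intervals sweep across a range of length $a_q-a_p>C\,\varphi_\varepsilon(p,q)$. Hence the set of shifts $j$ with $\sup_{p\le N\le q}\frac1N\sum_{n\le N}\mathbbm{1}_E(j+a_n)>1-\varepsilon$ has measure $\gtrsim C|E|$: this is failure of the weak $(1,1)$ maximal inequality with arbitrarily large constant, exactly as in the paper's proof of \cref{prop: SSO Interval Condition}. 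One then needs an amplification step --- taking unions of many translated copies of $E$ (or Rokhlin towers, as packaged in \cref{lem: Transference} and the del Junco--Rosenblatt/ABJLRW machinery) --- to convert ``maximal function large on a set of measure $\ge C|E|$'' into ``sup of averages $>1-\varepsilon$ on measure $>1-\varepsilon$ with $\mu(E)<\varepsilon$,'' from which strong sweeping out (both the $\limsup$ and the $\liminf$ statements) follows. Your sketch gestures at transference but omits both this multi-time geometry and the amplification, and these are precisely the load-bearing parts of the proof.
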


        \cref{prop: JW SSO Condition} yields numerous examples of slow growing sequences with the strong sweeping out property, including \( \floor{(\log n)^c}\) for \( c > 0\) and \( \floor{\log \log n}\). 

        Though many universally bad sequences  were later shown to be strong sweeping out, this is not always the case, demonstrating that the strong sweeping out property is a strictly stronger notion of non-convergence.  
        
        \begin{defn}
            A sequence \( (a_n) \sse \N \) is called \(\delta\)-\textit{sweeping out} if for any non-atomic ergodic system \( (X, \mathcal{B}, \mu, T) \) and \( \varepsilon > 0\), there is an \( A \in \mathcal{B} \) satisfying \( \mu(A) < \varepsilon\) and
            \[
                \limsup_{N \to \infty} \EC{} \mathbbm{1}_A(T^{a_n} x) \geq \delta
            \]
            for \(\mu\)-almost every \( x \in X\). 
        \end{defn}

        If a sequence is \(\delta\)-sweeping out for some \( \delta > 0\), then the sequence is universally bad for \(L^\infty\). However, it is possible for a sequence to be \(\delta\)-sweeping out for some \( \delta < 1\), yet not strong sweeping out (see for instance \cite{Akcetal1996}).

        Among strong sweeping out sequences, there is a further classification of the strength of non-convergence behavior. For certain strong sweeping out sequences, such as \( \floor{\log n}\), there exist weaker forms of averaging for which pointwise convergence does hold. However, some sequences are so badly behaved no weaker form of averaging can counteract this behavior. 

        \begin{defn}
            A sequence \( (a_n) \) is called \textit{inherently strong sweeping out} if \((a_n)\) is strong sweeping out for all non-negative, non-increasing weight functions \( w: \N \to \C \) satisfying \( \sum_{n \leq N} w(n) \to \infty \).
        \end{defn}

        The class of inherently strong sweeping out sequences includes all lacunary sequences for instance, as shown in \cite{RosenblattWierdl1994}. On the other hand, certain sequences are known to exhibit particularly good behavior. 

        \begin{defn}
            A sequence \( (a_n) \sse \N \) is called \textit{universally good for \( L^p \)} if for all measure preserving systems \( (X, \mu, T) \) and \( f \in L^p(\mu)\), the averages
            \(
                \EC{} f(T^{a_n} x) 
            \)
            converge \(\mu-\)almost everywhere. 
        \end{defn}
    
        In this language, Birkhoff's Pointwise Ergodic Theorem states that the integers are universally good for \( L^1 \). \cref{thm: Double Log Convergence} demonstrates that, when considering double-logarithmic averages, \( \Omega(n) \) is universally good for \( L^1 \). This is also known for certain sparse subsequences. Bourgain \cite{Bourgain_1988} showed that the sequence of perfect squares, and more generally any polynomial subsequence, is universally good for \( L^p \) for \( p > 1 \). Further, if \( p_n \) denotes the \(n\)-th prime integer, Wierdl \cite{Wierdl1988} proved that \( (p_n) \) is universally good for \( L^p \) for \( p>1\).

        At the heart of many strong sweeping out results is the failure of a maximal ergodic inequality. The following lemma allows us to disprove such an inequality for a chosen system and transfer the result to an arbitrary system.

        \begin{lemma}
        \label{lem: Transference}
            Let \((a_n)\) be a sequence of integers. Suppose that for any \(\varepsilon\in (0,1)\) and \(N_1>0\), there exist \(N_2 > N_1\) and a measure-preserving system \((\widetilde{X}, \widetilde{\mathcal{B}}, \nu, S)\) with a set \(\widetilde{E}\in \widetilde{\mathcal{B}}\) such that \(\nu (\widetilde{E})<\varepsilon\) and
            \begin{equation*}\label{eq:2.10} 
                \nu \Big\{ x \in \widetilde{X} \,:\, \sup_{N_1 \leq N \leq N_2} \EC{} \mathbbm{1}_{\widetilde{E}}(S^{a_n} x)
                > 1-\varepsilon \Big\}
                > 
                1 - \varepsilon.
            \end{equation*}
            Then in any non-atomic ergodic system \((X,\mathcal{B},\mu,T)\), there exists a set \(E\) with \(\mu(E)<\varepsilon\) such that
            \begin{equation*}
                \mu \Big\{x \in X : \sup_{N_1 \leq N \leq N_2} \EC{} \mathbbm{1}_E (T^{a_n}x) > 1-\varepsilon \Big\}
                > 
                1 - 2\varepsilon.
            \end{equation*}
        \end{lemma}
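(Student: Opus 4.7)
The plan is to prove this by a Rokhlin-tower transference: build a tall tower in $(X,\mathcal{B},\mu,T)$ and pull the bad set $\widetilde{E}$ back through a measure-preserving code from the tower into $\widetilde{X}$. Fix $\varepsilon\in(0,1)$ and $N_1$, invoke the hypothesis to obtain $N_2$, a system $(\widetilde{X},\widetilde{\mathcal{B}},\nu,S)$, and a witnessing set $\widetilde{E}$, and write $M:=a_{N_2}$. Choose an auxiliary $\delta\in(0,\varepsilon/2)$. Since $T$ is ergodic on a non-atomic space, it is aperiodic, so Rokhlin's lemma produces a base $B\in\mathcal{B}$ with $B,TB,\ldots,T^{H-1}B$ pairwise disjoint and $\mu\bigl(\bigsqcup_{j=0}^{H-1}T^j B\bigr)>1-\delta$, with $H$ chosen large enough that $M/H<\delta$. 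Fix a measurable map $\pi:B\to\widetilde{X}$ pushing $\mu|_B/\mu(B)$ forward to $\nu$; such a map exists by the isomorphism theorem for standard Borel probability spaces. Extend $\pi$ to a code on the tower by $\phi(T^j b):=S^j\pi(b)$ for $b\in B$ and $0\le j<H$, and define
\[
    E \;:=\; \phi^{-1}(\widetilde{E}) \;\subseteq\; \bigsqcup_{j=0}^{H-1}T^jB.
\]

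To check $\mu(E)<\varepsilon$ I would compute, using $T$-invariance of $\mu$, the defining property of $\pi$, and $S$-invariance of $\nu$,
\[
    \mu(E) \;=\; \sum_{j=0}^{H-1}\mu\bigl\{b\in B:S^j\pi(b)\in\widetilde{E}\bigr\}
           \;=\; H\mu(B)\,\nu(\widetilde{E})
           \;\le\; \nu(\widetilde{E}) \;<\; \varepsilon.
\]
For the maximal estimate, call a point $y=T^j b$ \emph{good} if $0\le j\le H-M-1$; the good points form a set of $\mu$-measure at least $(1-\delta)(1-M/H)>1-2\delta$. For any good $y$, all iterates $T^{a_n}y$ with $n\le N_2$ remain inside the tower, so $\phi(T^{a_n}y)=S^{a_n}(S^j\pi(b))$ and consequently
\[
    \sup_{N_1\le N\le N_2} \EC{} \mathbbm{1}_E(T^{a_n}y)
    \;=\; \sup_{N_1\le N\le N_2} \EC{} \mathbbm{1}_{\widetilde{E}}\bigl(S^{a_n}(S^j\pi(b))\bigr).
\]
Let $\widetilde{F}:=\bigl\{\widetilde{x}\in\widetilde{X}:\sup_{N_1\le N\le N_2}\EC{}\mathbbm{1}_{\widetilde{E}}(S^{a_n}\widetilde{x})>1-\varepsilon\bigr\}$; the hypothesis gives $\nu(\widetilde{F})>1-\varepsilon$. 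Summing the measures of $\{b\in B:S^j\pi(b)\in\widetilde{F}\}$ over $j=0,\ldots,H-M-1$ and applying $S$-invariance at each level shows that the $\mu$-measure of good $y$ whose supremum exceeds $1-\varepsilon$ is at least $(H-M)\mu(B)\,\nu(\widetilde{F})>(1-2\delta)(1-\varepsilon)>1-2\varepsilon$, as required.

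The main obstacle is the careful accounting of three independent error contributions — the $\delta$-gap in the Rokhlin cover, the top $M$ tower levels where the $T^{a_n}$-orbit spills outside the tower, and the $\varepsilon$-deficit in the hypothesis — so that they aggregate to at most the advertised $2\varepsilon$. A secondary technical point is the existence of the code $\pi$, but this is a routine consequence of the isomorphism theorem for standard probability spaces, and the invariance identities above are simply a matter of unwinding the definitions.
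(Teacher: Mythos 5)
The paper itself gives no proof of this lemma, deferring to ``a standard application of the Rokhlin lemma'' with a citation to Jones; your proof is precisely that standard argument (tall tower, measure-preserving code from the base to $\widetilde{X}$, pull back the bad set, control the spillover at the top of the tower). The structure, the accounting of the three error sources, and the final arithmetic $(1-2\delta)(1-\varepsilon)>1-2\varepsilon$ with $\delta<\varepsilon/2$ are all correct.

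One genuine slip: you set $M:=a_{N_2}$, but nothing in the hypothesis says $(a_n)$ is monotone, and in this paper's main application the sequence $a_n=\Omega(n)$ is very far from monotone. As written, $T^{a_n}y$ can leave the tower for a ``good'' $y=T^jb$ if some $a_n$ with $n<N_2$ exceeds $a_{N_2}$. You need $M:=\max_{1\le n\le N_2}a_n$ (or $\max_{1\le n\le N_2}|a_n|$ if negative exponents are allowed, since the lemma only assumes $(a_n)\subseteq\Z$ and a negative $a_n$ would push $T^jb$ below the base). With that correction the argument goes through unchanged, since the only role of $M$ is to guarantee $0\le j+a_n\le H-1$ for every $n\le N_2$ whenever $0\le j\le H-M-1$.

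A secondary point worth a sentence in a final write-up: the measure-preserving map $\pi:B\to\widetilde{X}$ exists because $(B,\mu|_B/\mu(B))$ is a non-atomic standard probability space (non-atomicity of $\mu$ is part of the hypothesis), hence isomorphic to Lebesgue measure on $[0,1]$, from which one can always push forward onto any standard Borel probability space; you cite the isomorphism theorem but do not note that non-atomicity of $\mu$ is what makes it applicable, and that one only needs a factor map, not an isomorphism (so $\nu$ itself need not be non-atomic).
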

        
        This result follows from a standard application of the Rokhlin lemma (see \cite[Proposition 2.1]{Jones_2004} for more details).

\section{Asymptotic Behavior of \( \Omega(n) \)}
\label{chp: Number Theory}
    \subsection{Classical Results}
    The study of the asymptotic behavior of \( \Omega(n) \) has a rich history in multiplicative number theory, and such questions are often related to fundamental questions about the prime numbers. For instance, let \( \lambda(n) := (-1)^{\Omega(n)} \) denote the Liouville function. Then the Prime Number Theorem is equivalent to the assertion that
    \[
        \limAVG \lambda(n) = 0.
    \]
    In other words, asymptotically, \( \Omega(n) \) is even half the time. Further clasical results include the Pillai-Selberg Theorem \cite{Pillai1940, Selberg1939} and Erd\H{o}s-Delange Theorem \cite{Erdos1946, Delange1958}. Hardy and Ramanujan \cite[Theorem C']{HR1917} showed that the normal order of \( \Omega(n) \) is \( \log \log n \). 

    \begin{theorem}[(Hardy-Ramanujan Theorem)]
    \label{thm: Hardy Ramanujan}
        For \(C > 0\), define \(g_C: \N \to \N\) by 
        \begin{equation}
        \label{eqn: HR}
            g_C(N) 
            = 
            \# \Big\{ n \leq N : |\Omega(n) - \log \log n| > C \sqrt{\log \log N} \Big\}.
        \end{equation}
        Then for all \(  \varepsilon > 0 \), there is some \( C \geq 1 \) such that 
        \[
            \limsup_{N \to \infty} \frac{g_C(N)}{N} \leq \varepsilon.
        \]
    \end{theorem}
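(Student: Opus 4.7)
The plan is to prove \cref{thm: Hardy Ramanujan} by Tur\'an's second-moment method, with a short argument at the end to swap $\log\log N$ for $\log\log n$.

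First I would compute the mean and second moment of $\Omega$ on $[N]$. Writing $\Omega(n) = \sum_{p^k}\mathbbm{1}_{p^k \mid n}$ as a sum over prime powers and interchanging the order of summation gives
\[
    \sum_{n \leq N}\Omega(n) = \sum_{p^k \leq N}\lfloor N/p^k\rfloor = N\log\log N + O(N),
\]
using Mertens' estimate $\sum_{p \leq N} 1/p = \log\log N + O(1)$ together with the convergence of $\sum_{p,\,k\geq 2} 1/p^k$. Next I would expand $\Omega(n)^2 = \sum_{p^k, q^l}\mathbbm{1}_{\mathrm{lcm}(p^k,q^l)\mid n}$ and split the sum into the off-diagonal part $p \neq q$ (which contributes $N(\log\log N)^2 + O(N\log\log N)$ by Mertens squared) and the diagonal part $p = q$ (which contributes $O(N\log\log N)$, since for each prime $p$ the pairs $(k,l)$ with $\max(k,l)=m$ contribute $(2m-1)/p^m$, and $\sum_p 1/p + \text{const} = \log\log N + O(1)$). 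This gives $\sum_{n \leq N}\Omega(n)^2 = N(\log\log N)^2 + O(N\log\log N)$. Combining via the standard variance identity yields the Tur\'an bound
\[
    \sum_{n \leq N}\bigl(\Omega(n) - \log\log N\bigr)^2 = O(N\log\log N),
\]
and Chebyshev's inequality then gives
\[
    \#\bigl\{n \leq N : |\Omega(n) - \log\log N| > C\sqrt{\log\log N}\bigr\} \leq O(N/C^2).
\]

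Finally, to replace $\log\log N$ by $\log\log n$ inside the exceptional set, I would partition $[N]$ at $\sqrt{N}$. The $n \leq \sqrt{N}$ contribute at most $\sqrt{N} = o(N)$ to $g_C(N)$, while for $n > \sqrt{N}$ one has $\log\log N - \log\log n \leq \log 2$, so the event $|\Omega(n) - \log\log n| > C\sqrt{\log\log N}$ forces $|\Omega(n) - \log\log N| > (C-1)\sqrt{\log\log N}$ for $N$ large. The Chebyshev bound above controls this with $(C-1)^{-2}$ in place of $C^{-2}$, and choosing $C$ sufficiently large in terms of $\varepsilon$ completes the proof.

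The argument is classical and presents no serious obstacle. The most delicate bookkeeping is confirming that the leading $N(\log\log N)^2$ contributions in the second moment and in the mean-squared term cancel exactly in the variance identity, so that the remainder is only $O(N\log\log N)$; this is precisely what is needed for Chebyshev to win on the $\sqrt{\log\log N}$ fluctuation scale. The $\log\log n$ versus $\log\log N$ discrepancy is a minor technicality, comfortably absorbed by splitting at $\sqrt{N}$.
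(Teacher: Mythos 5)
The paper does not prove \cref{thm: Hardy Ramanujan}; it cites it directly from Hardy and Ramanujan \cite[Theorem C']{HR1917}, and the remark following it likewise defers the equivalence of the $\log\log n$ and $\log\log N$ formulations to \cite[Theorem C]{HR1917}. So there is no in-paper argument to compare against; you are supplying a proof from scratch. Your Tur\'an second-moment approach is the standard modern proof of this statement, and the outline (mean $\sim N\log\log N$, second moment $\sim N(\log\log N)^2$, exact cancellation of the leading term in the variance, Chebyshev, then a split at $\sqrt N$ to trade $\log\log N$ for $\log\log n$) is correct.

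One place you should be more careful is the error bookkeeping in the off-diagonal second-moment sum. Replacing $\lfloor N/(p^k q^l)\rfloor$ by $N/(p^k q^l)+O(1)$ accumulates an $O(1)$ error over every pair of prime powers with $\mathrm{lcm}(p^k,q^l)\le N$, and a crude count of such pairs is not obviously $o(N\log\log N)$. For $\omega$ the usual fix is to truncate to primes $p\le N^{1/3}$, where $\omega(n)-\omega_{N^{1/3}}(n)\le 2$, and the error sum is $O(N^{2/3})$. For $\Omega$ the analogous truncation by the size of the prime \emph{power} does not leave a bounded tail (think of $n=2^{30}$), so it is cleaner to either truncate by the size of the \emph{prime} (then $\Omega(n)-\sum_{p\le N^{1/3}}\nu_p(n)\le 3$ for $n\le N$), or simply prove Tur\'an for $\omega$ and separately show $\sum_{n\le N}(\Omega(n)-\omega(n))^2 = O(N)$ (which follows because $\sum_{p,k\ge 2}1/p^k$ converges), then combine via $(a+b)^2\le 2a^2+2b^2$. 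Either repair is routine and does not change the shape of your argument; the $\sqrt N$ split at the end is fine as written.
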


    \begin{remark}
        By \cite[Theorem C]{HR1917}, it is equivalent to verify Equation \eqref{eqn: HR} with \( \log \log n\) replaced by \( \log \log N\). 
    \end{remark}

    Erd\H{o}s and Kac \cite{EK1940} later generalized \cref{thm: Hardy Ramanujan}, proving that \(\Omega(n)\) becomes normally distributed within such intervals.

    \begin{theorem}[(Erd\H{o}s-Kac Theorem)]
    \label{thm: Erdos-Kac}
        For \( A < B \in \R \), 
        \[
            \lim_{N \to \infty} \frac{1}{N} \# \Big\{ n \leq N \,:\, A \leq \frac{\Omega(n) - \log\log N}{\sqrt{\log\log N}} \leq B \Big\}
            = 
            \frac{1}{\sqrt{2\pi}} \int_{A}^B e^{-t^2/2} d t.
        \]
    \end{theorem}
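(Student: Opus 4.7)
The plan is to establish the Erd\H{o}s-Kac theorem via the method of moments, exploiting the approximate independence of the divisibility events $\{p \mid n\}$ across primes $p$. I would first reduce from $\Omega(n)$ to the related function $\omega(n)$ counting distinct prime divisors: since $\Omega(n) - \omega(n) = \sum_{k \geq 2} \#\{p : p^k \mid n\}$ has bounded average (by convergence of $\sum_p p^{-2}$), Markov's inequality shows that the difference is $o(\sqrt{\log\log N})$ outside a set of density tending to zero, which is negligible under the given normalization. The remark following \cref{thm: Hardy Ramanujan} further ensures that replacing $\log\log n$ by $\log\log N$ introduces no additional error.

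Next, I would introduce a truncation threshold $y = y(N)$ (say $y = N^{1/s(N)}$ for some slowly growing $s(N) \to \infty$) and define $\omega_y(n) = \sum_{p \leq y} \mathbbm{1}_{p \mid n}$. By Mertens' theorem, $\sum_{p \leq y} 1/p = \log\log y + O(1)$, so $\omega_y(n)$ has mean $\log\log N + o(\sqrt{\log\log N})$ over $n \leq N$; a Tur\'an-Kubilius type $L^2$ estimate shows that the tail $\omega(n) - \omega_y(n)$ contributes negligibly after normalization. This reduces the problem to proving a CLT for $\omega_y(n)$.

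The heart of the proof is the moment computation. For each fixed $k$, I would expand
\[
    \frac{1}{N} \sum_{n \leq N} \big(\omega_y(n) - \log\log N\big)^k
\]
as a sum over $k$-tuples $(p_1, \ldots, p_k)$ of primes at most $y$. For tuples of distinct primes, the average of $\prod_i \mathbbm{1}_{p_i \mid n}$ over $n \leq N$ equals $\prod_i (1/p_i) + O(1/N)$, mimicking independence of Bernoulli events; after centering, summing these contributions yields the error $O(2^k y^k / N)$, which is negligible once $y^k = o(N)$. Contributions from tuples with repeated primes are absorbed using $\sum_p 1/p^2 < \infty$. Matching the leading term against the $k$-th moment of $\sum_{p \leq y}(X_p - 1/p)$, with $X_p$ genuinely independent Bernoullis having $\mathbb{P}(X_p = 1) = 1/p$, one concludes by the classical Lindeberg-Feller CLT (the variance of this independent sum is $\sum_{p \leq y} \tfrac{1}{p}(1 - \tfrac{1}{p}) \sim \log\log N$) that the $k$-th moment of $(\omega_y(n) - \log\log N)/\sqrt{\log\log N}$ tends to the $k$-th Gaussian moment.

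The main obstacle is calibrating the truncation parameter $y$: it must be large enough that $\omega(n) - \omega_y(n)$ is negligible after normalization, yet small enough that $y^k = o(N)$ for every fixed $k$, keeping the independence error summable. With $y$ chosen appropriately, all polynomial moments of the normalized quantity converge to Gaussian moments. Since the standard Gaussian is determined by its moments, convergence of moments upgrades to convergence in distribution, which is exactly the content of the theorem.
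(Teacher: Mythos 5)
The paper does not prove the Erd\H{o}s--Kac Theorem; it is stated as a classical result with a citation to Erd\H{o}s and Kac's 1940 paper, so there is no in-paper proof to compare against. Your sketch is, however, a correct outline of the standard modern proof by the method of moments (due essentially to Billingsley, and streamlined by Granville--Soundararajan). This is a genuinely different route from the original Erd\H{o}s--Kac argument, which used Brun's sieve to truncate $\omega(n)$ to primes below a threshold and then imported the classical CLT for a model of independent indicators; the moment method instead computes centered moments of $\omega_y(n)$ directly from the quasi-independence of the events $\{p \mid n\}$, avoiding sieve machinery entirely. Your calibration is sound: with $y = N^{1/s(N)}$ and $s(N) \to \infty$ slowly, $y^k = o(N)$ for every fixed $k$ (keeping the $O(y^k/N)$ error from replacing $\lfloor N/m \rfloor$ by $N/m$ negligible), while the tail $\sum_{y < p \le N} 1/p = \log s(N) + O(1)$ is $o(\sqrt{\log\log N})$ provided $\log s(N) = o(\sqrt{\log\log N})$, which is compatible with $s(N) \to \infty$. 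The reduction from $\Omega$ to $\omega$ via the bounded average of $\Omega - \omega$ and Markov's inequality is also correct. One small point worth making explicit when writing this up in full: centering at $\sum_{p \le y} 1/p = \log\log y + O(1)$ rather than at $\log\log N$ is what makes the multinomial expansion clean, and the shift $\log\log N - \log\log y = \log s(N)$ must be checked to be $o(\sqrt{\log\log N})$ so that the recentering does not disturb the normalized limit; you have the right constraint on $s(N)$ to make this work.
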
 
    
    Thus, the Erd\H{o}s-Kac Theorem states that for large \(N\), the sequence \( \{\Omega(n)\}_{n \leq N} \) becomes roughly normally distributed with mean and variance \(\log \log N\). A result of R\'enyi and Tur\'an \cite{RenyiTuran58} gives the optimal rate of convergence in the Erd\H{o}s-Kac Theorem. 

    \begin{lemma}[\cite{RenyiTuran58}]
    \label{lem:RT} The estimate 
        \[
            \frac{1}{N} \# \Big\{ n \leq N \,:\, \frac{|\Omega(n) - \log \log N|}{\sqrt{\log \log N}} < C \Big\} 
            = 
            \frac{1}{\sqrt{2 \pi}} \int_{-C}^C e^{-t^2/2} \, dt + O\Big( \frac{1}{\sqrt{\log \log N}} \Big).
        \]
        holds uniformly for \( C \in \R \). 
    \end{lemma}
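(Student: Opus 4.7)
The plan is to follow the classical characteristic-function approach of Rényi and Turán: first establish a uniform asymptotic for the Dirichlet generating function of \(z^{\Omega(n)}\), then specialize \(z\) to a point on the unit circle to extract the characteristic function of the normalized variable \((\Omega(n) - \log\log N)/\sqrt{\log\log N}\), and finally apply a smoothing inequality of Esseen type to pass from the characteristic function estimate to the claimed distribution function estimate.

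First I would study the Dirichlet series
\[
F(s,z) := \sum_{n=1}^\infty z^{\Omega(n)} n^{-s} = \prod_p \bigl(1 - z p^{-s}\bigr)^{-1} = \zeta(s)^z \, H(s,z),
\]
where \(H(s,z)\) is holomorphic and bounded in a product region of the form \(\operatorname{Re}(s) > 1/2 + \delta\) and \(|z| \le R\) for some fixed \(R > 1\), with \(H(1,1) = 1\). The Selberg--Delange method (contour integration around \(s = 1\) using a Hankel contour and the classical zero-free region for \(\zeta\)) then gives, uniformly in a small complex disk about \(z = 1\),
\[
\sum_{n \leq N} z^{\Omega(n)} = \frac{H(1,z)}{\Gamma(z)}\, N (\log N)^{z-1} + O\!\left( N (\log N)^{\operatorname{Re}(z) - 2} \right).
\]
Substituting \(z = z_t := \exp(it/\sqrt{\log\log N})\) and performing Taylor expansion to second order (using \(H(1,1) = 1\), \(\Gamma(1) = 1\), and the factor \((\log N)^{z_t - 1} = \exp((z_t - 1)\log\log N)\)), a direct computation shows
\[
\frac{1}{N}\sum_{n\leq N} \exp\!\Bigl(it\,\tfrac{\Omega(n) - \log\log N}{\sqrt{\log\log N}}\Bigr) = e^{-t^2/2} + O\!\left(\frac{1 + |t|^3}{\sqrt{\log\log N}}\right),
\]
valid uniformly for \(|t|\) in a range like \(|t| \leq (\log\log N)^{1/6}\).

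Finally I would feed this into Esseen's smoothing inequality, which bounds the sup-norm distance between a distribution function and the normal CDF \(\Phi\) by an integral of the difference of their characteristic functions divided by \(t\), plus a \(1/T\) term. Choosing \(T\) of order \((\log\log N)^{1/6}\) yields the uniform bound \(O(1/\sqrt{\log\log N})\) on the Kolmogorov distance, which is precisely the content of the lemma after rewriting the symmetric interval \(\{|\Omega(n) - \log\log N| < C\sqrt{\log\log N}\}\) in terms of the distribution function at \(\pm C\). The main obstacle is the tail contribution to Esseen's integral, i.e.\ controlling the characteristic function for \(|t|\) beyond the small range where the Selberg--Delange expansion is sharp; this requires a separate Halász--Montgomery type crude bound of the form \(|F_N(z_t)/N| = O((\log N)^{-\eta})\) for \(|t|\) in the intermediate range, ensuring that the tail is absorbed into the error term.
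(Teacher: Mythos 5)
The paper does not actually prove this lemma: it is quoted directly from R\'enyi and Tur\'an \cite{RenyiTuran58}, so there is no internal proof to compare against. Your proposal is essentially a sketch of the classical argument behind the cited result (Selberg--Delange analysis of \(\sum_n z^{\Omega(n)}n^{-s}\), specialization to \(z_t=\exp(it/\sqrt{\log\log N})\), then an Esseen smoothing step), and the overall architecture is the right one.

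As written, though, there is a concrete quantitative gap at the last step. Esseen's inequality bounds the Kolmogorov distance by \(\int_{-T}^{T}\bigl|\varphi_N(t)-e^{-t^2/2}\bigr|\,|t|^{-1}dt+O(1/T)\), so with your choice \(T\asymp(\log\log N)^{1/6}\) the unavoidable \(O(1/T)\) term is \(O((\log\log N)^{-1/6})\), which falls short of the claimed \(O((\log\log N)^{-1/2})\). To reach the stated error you must take \(T\asymp\sqrt{\log\log N}\), and then you need control of \(\varphi_N(t)\) on all of \(|t|\le c\sqrt{\log\log N}\), not merely \(|t|\le(\log\log N)^{1/6}\). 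No Hal\'asz--Montgomery input is needed at this scale: for \(|t|\le c\sqrt{\log\log N}\) the point \(z_t\) stays in a fixed small arc of the unit circle around \(1\), so the same Selberg--Delange asymptotic applies, and the elementary bound \(|(\log N)^{z_t-1}|=\exp\bigl(-(1-\cos(t/\sqrt{\log\log N}))\log\log N\bigr)\le e^{-c't^2}\) makes the contribution of \((\log\log N)^{1/6}\le|t|\le c\sqrt{\log\log N}\) to Esseen's integral negligible, while the \(O(N(\log N)^{\mathrm{Re}(z_t)-2})\) error contributes only \(O(\log\log\log N/\log N)\). With that repair --- main range via your Taylor expansion (stated in a form vanishing at \(t=0\), e.g. \(O((|t|+|t|^3)e^{-t^2/4}/\sqrt{\log\log N})\), so the \(1/|t|\) factor is harmless), the rest via the exponential decay of \(|(\log N)^{z_t-1}|\), and \(T\asymp\sqrt{\log\log N}\) --- the argument delivers the lemma, including uniformity in \(C\); a Hal\'asz-type bound would only matter if one took \(T\) far larger, which is unnecessary. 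One minor point to state explicitly: since \(\Omega\) counts multiplicity, the Euler factor at \(p=2\) forces \(|z|<2^{\mathrm{Re}(s)}\), so holomorphy of \(H(s,z)\) on \(\mathrm{Re}(s)>1/2+\delta\) requires \(R<2^{1/2+\delta}\); your ``fixed \(R>1\)'' should carry this restriction, though it costs nothing here because \(|z_t|=1\).
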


\subsection{Distribution of k-almost primes}
\label{ss: NT Weights}
    In this paper, we convert weighted ergodic averages to those involving weight functions encoding information about the distribution of \(k\)-almost primes. Given \( k \in \N\), let \( \P_k\) denote the set of \(k\)-almost primes, those integers satisfying \( \Omega(n) = k \). Let \( \pi_N(k) \) denote the number of \(k\)-almost primes less than \( N \).  Then 
    \[
        \pi_N(k) = \# \{ n \leq N \,:\, \Omega(n) = k\} = \sum_{n \in \P_k \cap [N]} 1.
    \]
    Erd\H{o}s \cite[Theorem II]{Erdos1948} provides a uniform estimate for \( \pi_N(k)\) for certain values of \( k \). For \( N \in \N \), define the interval \( I_N \) by 
    \[
        I_N = \Big[ \log \log N - C \sqrt{\log \log N} ~,~ \log \log N + C \sqrt{\log \log N} 
        \Big],
    \]
    where \(C \geq 1\) is the constant guaranteed by \cref{thm: Hardy Ramanujan}.
    
    \begin{lemma}[\cite{Erdos1948}] 
    \label{lem: Erdos piN Approx}
        The function \( \pi_N(k) \) satisfies  
        \[
            \pi_N(k) = \frac{N}{\log N} \frac{(\log\log N)^{k-1}}{(k-1)!} (1 + o_{N \to \infty}(1))
        \]
        uniformly for \( k \in I_N \). 
    \end{lemma}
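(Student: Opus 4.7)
The plan is to derive the asymptotic via the analytic (Selberg--Sathe) approach using a generating Dirichlet series; the saving grace of working with $k \in I_N$ is that the relevant saddle point sits near $1$, where the multiplicative correction factors vanish into $1 + o(1)$.

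First, I would introduce the two-variable generating series
\[
    \sum_{n=1}^\infty \frac{z^{\Omega(n)}}{n^s} \;=\; \prod_p \frac{1}{1 - z p^{-s}} \;=\; \zeta(s)^{z}\, H(s,z),
\]
where $H(s,z) = \prod_p (1 - p^{-s})^{z}(1 - z p^{-s})^{-1}$ is analytic in a region that extends past $\{\operatorname{Re}(s) = 1\} \times \{|z| < 2\}$ and satisfies $H(1,1) = 1$. Applying Perron's formula and shifting the contour around the branch point at $s = 1$ yields, uniformly on compact subsets of $\{|z| < 2\}$,
\[
    \sum_{n \leq N} z^{\Omega(n)} \;=\; \frac{N\,(\log N)^{z-1}}{\Gamma(z)}\, H(1,z)\,\Big(1 + O\big(\tfrac{1}{\log N}\big)\Big).
\]

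Next, I would extract $\pi_N(k)$ by Cauchy's integral formula,
\[
    \pi_N(k) \;=\; \frac{1}{2\pi i} \oint_{|z| = r} z^{-k-1} \sum_{n \leq N} z^{\Omega(n)} \, dz,
\]
choosing the contour radius at the saddle point $r = (k-1)/\log\log N$ of the effective integrand $z^{-k}(\log N)^{z}$. A standard saddle-point computation produces the main term $\tfrac{N}{\log N}\tfrac{(\log\log N)^{k-1}}{(k-1)!}$ together with a multiplicative correction $H(1,r)/\Gamma(r)$. For $k \in I_N$ we have $r = 1 + O(1/\sqrt{\log\log N})$, so by continuity of $H(1,\cdot)$ and $\Gamma$ at $1$, and since $H(1,1)=\Gamma(1)=1$, this correction is $1 + o(1)$ uniformly in $k$.

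The main obstacle is controlling the saddle-point remainder uniformly over the window $I_N$, rather than for a single fixed $k$. Since $k$ drifts by order $\sqrt{\log\log N}$, the factor $r^{-k-1}(\log N)^{r}$ along the contour contributes oscillations that must be handled by Gaussian integration in a small arc around $r$. The width of the Hardy--Ramanujan window $I_N$ is precisely what allows all three sources of error---the $O(1/\log N)$ Perron error, the deviation of $H(1,r)/\Gamma(r)$ from $1$, and the Gaussian tail in the saddle estimate---to collapse into the single $1 + o_{N \to \infty}(1)$ factor claimed in the statement.
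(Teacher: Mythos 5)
The paper itself does not prove this lemma: it is imported verbatim from Erd\H{o}s's 1948 paper \cite{Erdos1948}, whose original argument is elementary (sieve-style estimates on $k$-almost primes, with no complex analysis), and the present paper only uses the statement as a black box. Your proposal is therefore a genuinely different route, namely the Sathe--Selberg method via the Selberg--Delange asymptotic for $\sum_{n\le N} z^{\Omega(n)}$ and Cauchy coefficient extraction at a saddle $r=k/\log\log N\approx 1$, and in outline it is correct: the restriction $|z|<2$ (forced by the Euler factor at $p=2$ for $\Omega$, as opposed to $\omega$) is correctly observed, and the lemma drops out because the correction factor $H(1,r)/\Gamma(r)$ tends to $H(1,1)/\Gamma(1)=1$ uniformly for $k\in I_N$, where $r=1+O(1/\sqrt{\log\log N})$; the mismatch between your $(\log\log N)^k/k!$ from the coefficient extraction and the stated $(\log\log N)^{k-1}/(k-1)!$ is harmless since their ratio is $k/\log\log N=1+o(1)$ on $I_N$. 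What your approach buys is strictly more than the lemma: uniformity in the full range $k\le(2-\delta)\log\log N$ with the explicit factor $G(k/\log\log N)=H(1,\cdot)/\Gamma(\cdot)$, whereas Erd\H{o}s's elementary argument is tailored to the Hardy--Ramanujan window and avoids contour integration altogether. Two small points of care if you write this out: state the Selberg--Delange estimate with an additive error, $\sum_{n\le N}z^{\Omega(n)}=N(\log N)^{z-1}\big(H(1,z)/\Gamma(z)+O(1/\log N)\big)$ uniformly on compacts of $\{|z|<2\}$, rather than the relative form you wrote, since $1/\Gamma(z)$ vanishes at $z=0,-1$ and your contour $|z|=r\approx 1$ passes near $z=-1$ where the main term nearly dies; after extracting the $k$-th coefficient the additive error contributes only $O(\sqrt{\log\log N}/\log N)$ relative to the main term, so the conclusion stands. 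And the ``Gaussian arc'' step should be justified by noting that $H(1,z)/\Gamma(z)$ is analytic (hence Lipschitz) near $z=1$ while the mass of $z^{-k-1}(\log N)^z$ on $|z|=r$ concentrates in an arc of angular width $O(1/\sqrt{k})$, which is exactly what makes the correction collapse to $H(1,r)/\Gamma(r)(1+o(1))$ uniformly over $I_N$.
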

    Erd\H{o}s \cite{Erdos1948} proves a similar estimate for the logarithmically weighted sums 
    \[
        \xi_N(k) := \sum_{n \in \P_k \cap [N]} \frac{1}{n}.
    \]
    Note we can view \(\xi_N\) as logarithmic sums of the arithmetic function \( \mathbbm{1}_{\P_k}\). 

    \begin{lemma}[\cite{Erdos1948}]
    \label{lem: Erdos Logarithmic Approx}
        The function \( \xi_N(k) \) satisfies 
        \[
            \xi_N(k) = \frac{1}{k!}(\log \log N)^k (1+o_{N\to \infty}(1))
        \]
        uniformly for \( k \in I_N \).  
    \end{lemma}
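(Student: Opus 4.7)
The plan is to apply Abel summation to express \( \xi_N(k) \) as an integral of \( \pi_t(k)/t^2 \), substitute the asymptotic from Lemma \ref{lem: Erdos piN Approx}, and evaluate via the change of variables \( u = \log \log t \). Summation by parts gives
\[
    \xi_N(k) = \frac{\pi_N(k)}{N} + \int_1^N \frac{\pi_t(k)}{t^2} \, dt.
\]
Lemma \ref{lem: Erdos piN Approx} shows that the boundary term has order \( (\log \log N)^{k-1}/((k-1)! \log N) \), smaller than the target by a factor \( k/(\log N \log \log N) = o(1) \) uniformly for \( k \in I_N \). This term is therefore absorbed into the \( 1 + o(1) \) error.

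To evaluate the integral, I would split at \( T = N^{1/\log \log N} \). For \( t \in [T, N] \), one has \( \log \log t = \log \log N + O(\log \log \log N) \), so any \( k \in I_N \) sits in a slightly enlarged window \( [\log \log t - C' \sqrt{\log \log t}, \log \log t + C' \sqrt{\log \log t}] \) about \( \log \log t \). After noting that the uniformity in Lemma \ref{lem: Erdos piN Approx} extends to such an enlargement of \( I_t \) (which follows by rerunning the Hardy-Ramanujan argument with a larger constant), substitute the asymptotic for \( \pi_t(k) \) and change variables via \( u = \log \log t \), \( du = dt/(t \log t) \):
\[
    \int_T^N \frac{\pi_t(k)}{t^2} \, dt = (1 + o(1)) \int_{\log \log T}^{\log \log N} \frac{u^{k-1}}{(k-1)!} \, du = (1 + o(1)) \frac{(\log \log N)^k - (\log \log T)^k}{k!}.
\]
Since \( (\log \log T / \log \log N)^k \to 0 \) for \( k \sim \log \log N \) (the base is only \( 1 - \log \log \log N / \log \log N \) while \( k \) grows like \( \log\log N \)), the lower endpoint contributes negligibly, yielding the desired main term.

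The residual \( \int_1^T \pi_t(k) t^{-2} \, dt \) is bounded via the trivial estimate \( \pi_t(k) \leq t \), giving \( \log T = \log N / \log \log N \). By Stirling's formula, \( (\log \log N)^k / k! \asymp \log N / \sqrt{\log \log N} \) uniformly for \( k \in I_N \), so the residual is smaller than the main term by a factor \( 1/\sqrt{\log \log N} \) and is hence negligible. The main obstacle is calibrating \( T \) so that simultaneously Lemma \ref{lem: Erdos piN Approx} applies uniformly for \( t \in [T, N] \) and \( k \in I_N \), and the trivial bound on \( [1, T] \) is small compared to the main term; the choice \( T = N^{1/\log \log N} \) balances these demands, though it requires recording that the range of uniformity in Lemma \ref{lem: Erdos piN Approx} can be widened in \( k \) without altering its conclusion.
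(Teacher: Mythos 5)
Your argument is correct, but it takes a different route from the paper: the paper offers no proof of this lemma at all, citing it (like the Ces\`aro estimate for \( \pi_N(k) \)) directly to Erd\H{o}s's 1948 paper, whereas you derive the logarithmic estimate from the Ces\`aro one by Abel summation. Your computation checks out: the boundary term is smaller than the main term by a factor \( \asymp 1/\log N \); on \( [T,N] \) with \( T = N^{1/\log\log N} \) one has \( \log\log t = \log\log N + O(\log\log\log N) \), the substitution \( u = \log\log t \) produces \( ((\log\log N)^k - (\log\log T)^k)/k! \), and the lower endpoint is negligible since \( (1 - \tfrac{\log\log\log N}{\log\log N})^k \le e^{-(1-o(1))\log\log\log N} \to 0 \) uniformly for \( k \in I_N \); and the trivial bound \( \pi_t(k)\le t \) on \( [1,T] \) gives \( \log N/\log\log N \), which Stirling shows is \( O(1/\sqrt{\log\log N}) \) times the main term. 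This buys a self-contained deduction of the logarithmic lemma from the Ces\`aro lemma, in the same partial-summation spirit the paper uses elsewhere (e.g.\ in Lemmas \ref{lem: Eta weight distribution} and \ref{lem: eta rate}), at the cost of one extra input you correctly flag: you need the asymptotic for \( \pi_t(k) \) to hold uniformly in a window around \( \log\log t \) with a slightly larger constant than the Hardy--Ramanujan constant \( C \) fixed in the statement of \cref{lem: Erdos piN Approx}. That enlargement is indeed part of Erd\H{o}s's (and the Sathe--Selberg) uniform estimate, so the step is legitimate, but your parenthetical justification is misstated: rerunning the Hardy--Ramanujan argument only enlarges the constant in the \emph{counting} bound of \cref{thm: Hardy Ramanujan}; it does not yield the asymptotic formula for \( \pi_t(k) \) in the wider range, for which you must appeal to the uniformity built into the cited Erd\H{o}s result itself.
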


    Thus, following appropriate normalization, the set of \( k\)-almost primes exhibits the same distribution on both the Ces\`aro and logarithmic scales. In the case of double logarithmic weights, the picture changes. Define 
    \[
        \eta_N(k) := \sum_{n \in \P_k \cap [N]} \frac{1}{n \log n}.
    \]  
    
    The study of the values of \( \eta_N(k) \) has a long history. First, Erd\H{o}s \cite{Erdos_1935} proved that the \( \eta_N(k) \) are uniformly bounded. In fact, the result holds for the infinite sums
    \[
        \eta(k) := \sum_{n \in \P_k } \frac{1}{n \log n},
    \]
    also known as the \textit{Erd\H{o}s sums of k-almost primes}. 

    \begin{lemma}[\cite{Erdos_1935}] 
    \label{lem: Erdos Eta}
        There exist a constant \( C > 0 \) such that \( \eta(k) \leq C \) for all \( k \in \N \).
    \end{lemma}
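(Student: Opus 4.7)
The plan is to reduce the bound on $\eta(k)$ to a Gamma-function integral via Abel summation, with input a Hardy--Ramanujan--Landau style uniform upper estimate for $\pi_N(k)$. Specifically, I would use the classical inequality
\[
    \pi_N(k) \leq c_1 \cdot \frac{N}{\log N} \cdot \frac{(\log\log N + c_2)^{k-1}}{(k-1)!}
\]
valid for all $N \geq 2$ and $k \geq 1$. This is the non-asymptotic companion to \cref{lem: Erdos piN Approx} that holds outside the window $I_N$.

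First, I apply Abel summation to $\eta_N(k) = \sum_{n \in \P_k \cap [2, N]} 1/(n \log n)$. With $f(n) = 1/(n \log n)$, this gives
\[
    \eta_N(k) = \frac{\pi_N(k)}{N \log N} + \sum_{n=2}^{N-1} \pi_n(k) \bigl( f(n) - f(n+1) \bigr),
\]
and a direct computation shows $f(n) - f(n+1) = O(1/(n^2 \log n))$. Substituting the uniform bound on $\pi_n(k)$ reduces the problem to estimating
\[
    \frac{1}{(k-1)!} \int_2^{\infty} \frac{(\log\log t + c_2)^{k-1}}{t (\log t)^2} \, dt,
\]
after standard sum-to-integral comparison (the integrand is eventually monotone in $t$).

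Next, the change of variables $u = \log\log t$ yields $dt/(t (\log t)^2) = e^{-u}\, du$, and after the translation $w = u + c_2$ the integral becomes
\[
    \frac{e^{c_2}}{(k-1)!} \int_{\log \log 2 + c_2}^{\infty} w^{k-1} e^{-w} \, dw \leq \frac{e^{c_2} \, \Gamma(k)}{(k-1)!} = e^{c_2},
\]
a bound independent of $k$. Finally, the boundary term $\pi_N(k)/(N \log N) \to 0$ as $N \to \infty$ for each fixed $k$ by the same Landau inequality, so letting $N \to \infty$ yields $\eta(k) \leq C$ for an absolute constant $C$.

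I expect the main subtlety to be justifying the uniform Hardy--Ramanujan--Landau bound, since \cref{lem: Erdos piN Approx} as stated only supplies asymptotics for $k \in I_N$, whereas here one needs a valid upper bound for every pair $(N,k)$, including the tails $k \ll \log\log N$ and $k \gg \log\log N$. Once that input is in hand, the computation is essentially forced: the exponential factor $e^{-u}$ produced by the substitution $u = \log\log t$ exactly balances the growth of $(\log\log t + c_2)^{k-1}/(k-1)!$ against $\Gamma(k)$, producing a bound uniform in $k$.
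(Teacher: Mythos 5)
The paper does not prove this lemma; it is cited directly to Erd\H{o}s (1935), so your attempt is being compared against the literature rather than against an argument in the text. Your computational skeleton --- Abel summation against $f(n) = 1/(n\log n)$, the change of variables $u = \log\log t$ turning $dt/(t(\log t)^2)$ into $e^{-u}\,du$, and the resulting bound $e^{c_2}\Gamma(k)/(k-1)! = e^{c_2}$ --- is correct and genuinely slick; that part of the argument is essentially forced once the upper bound on $\pi_n(k)$ is in hand.

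The gap is exactly the one you flag and then wave away. The uniform inequality
\[
    \pi_N(k) \leq c_1 \frac{N}{\log N}\frac{(\log\log N + c_2)^{k-1}}{(k-1)!}
\]
is the classical Hardy--Ramanujan bound for $\omega(n)$; it is \emph{false} for $\Omega(n)$ once $k$ is large compared with $\log\log N$. To see this, take $N = 2^k$: then $\pi_N(k) \geq 1$ because $n = 2^k$ is counted, whereas the right-hand side is $\frac{2}{k}\big(\tfrac{2e(\log k + O(1))}{k}\big)^{k-1} \cdot c_1 \to 0$ as $k \to \infty$. The offending range $k \gtrsim 2\log\log N$ is precisely where integers with a large power of $2$ dominate --- the same phenomenon behind the pole at $z=2$ in the Sathe--Selberg generating function $\prod_p(1 - z/p)^{-1}$ and behind the $2^{-k}$ rate in \cref{lem: GLM Estimate}. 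Since your Abel-summation integral runs over all $t \geq 2^k$, you are substituting a false bound on $\pi_t(k)$ on a nontrivial initial segment, and the conclusion $\eta(k) \leq e^{c_2}$ is unjustified as written.

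The argument can be repaired, but it requires a further idea, not just a citation: either split $n = 2^j m$ with $m$ odd, apply the Hardy--Ramanujan bound to the odd part (where the uniform inequality does hold) and sum the geometric series in $j$; or use the Hardy--Ramanujan bound only in the range $k \leq (2-\delta)\log\log t$ and supply a separate $2^{-(1-\delta)k}$-type bound for $\pi_t(k)$ in the complementary range. Either way, the boundedness of $\eta(k)$ for $\Omega$ is not a corollary of a single uniform Hardy--Ramanujan inequality the way it would be for $\omega$, and your proposal as stated does not close that gap.
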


    It was later shown by Zhang \cite{Zhang_1993} that this constant \( C \) is given by \( \eta(1)\). In other words, the primes have the maximal Erd\H{o}s sum. The strongest known approximation for \( \eta(k) \) is due to Gorodetsky, Lichtman, and Wong \cite{GLW2023}, who demonstrate that for large \(k\), the function \(\eta(k)\) is monotonically increasing to 1. 
    
    \begin{prop}[\cite{GLW2023}]
    \label{lem: GLM Estimate}
        There exists a constant \(d>0\) such that for any \(k\in \N\)
        \begin{equation*}
            \eta(k)
            = 
            1- \frac{\log2}{4} 2^{-k} \Big( d k^2+ O(k \log (k+1)\Big)
        \end{equation*}
        uniformly for \( k \geq 1\). 
    \end{prop}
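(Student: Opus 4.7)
The plan is to attack $\eta(k)$ through analytic methods built on the two-variable Dirichlet series
\[ F(s,z) \;:=\; \sum_{n \geq 1} \frac{z^{\Omega(n)}}{n^s} \;=\; \prod_{p} \frac{1}{1 - z/p^s}, \qquad \text{Re}(s) > 1,\ |z| < 2^{\text{Re}(s)}. \]
The starting identity $\frac{1}{n \log n} = \int_1^\infty n^{-s}\, ds$, valid for $n \geq 2$, converts the Erd\H{o}s sum into
\[ \eta(k) \;=\; \int_1^\infty [z^k] F(s, z) \, ds, \]
reducing the problem to uniform-in-$s$ asymptotics for the $k$-th Taylor coefficient of $F(s,\cdot)$. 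Equivalently, the generating function $H(z) := \sum_{k \geq 1} \eta(k)\, z^k$ equals $\int_1^\infty (F(s,z) - 1)\, ds$, and coefficient extraction via $\eta(k) = \frac{1}{2\pi i} \oint H(z)\, z^{-k-1}\, dz$ transports the problem into singularity analysis of $H$.

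\cref{lem: Erdos Eta} forces the radius of convergence of $H$ to be at least $1$, and the main term $\eta(k) \to 1$ is equivalent to $H$ having a simple pole of residue $-1$ at $z = 1$. I would extract this pole via the Selberg--Delange factorisation $F(s,z) = \zeta(s)^z\, G(s,z)$ with $G$ analytic and non-vanishing near $s = 1$: integrating along a Hankel contour in $s$ near $s = 1$ produces exactly the $(1-z)^{-1}$ singularity of $H$ at $z = 1$, after which all other contributions are analytic in a larger disc. The next singularity of $H$ sits at $z = 2$, inherited from the $p = 2$ Euler factor via the $(1 - z/2^s)^{-1}$ term at $s = 1$, and this singularity controls the $2^{-k}$ scale of the correction.

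To pin down the refined correction $-\frac{\log 2}{4}(d k^2 + O(k \log(k+1)))\, 2^{-k}$, I would peel off the prime $2$ by writing $F(s, z) = (1 - z/2^s)^{-1} R(s, z)$ with $R(s,z) := \prod_{p \geq 3}(1 - z/p^s)^{-1}$. Near $(s, z) = (1, 2)$, the singular behaviour of $F$ is entirely captured by $(1 - z/2^s)^{-1}$, while $R$ is analytic in $z$ on a disc of radius at least $3^s$ and admits a local Taylor expansion in both variables. Analysing $\int_1^\infty (1 - z/2^s)^{-1} R(s,z)\, ds$ near $z = 2$ by a Hankel contour in $s$, a second-order expansion of $R$ at $(1, 2)$ should produce the $d k^2\, 2^{-k}$ term with $d$ identifiable as an explicit second-derivative coefficient of $R$ at $(1,2)$, while the subdominant derivatives and remainder contribute the $O(k \log(k+1)\, 2^{-k})$ error. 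The main obstacle is the Selberg--Delange bookkeeping required to make both the $z = 1$ and $z = 2$ expansions uniform in $s$ with sufficient precision to isolate these correction terms cleanly, including a tail estimate ensuring that the $s$-integral over $s$ bounded away from $1$ is absorbed into negligible error.
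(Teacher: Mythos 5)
This proposition is quoted from \cite{GLW2023}; the paper under review gives no proof of it, so your attempt can only be judged as a standalone proof of that external result, and as such it has genuine gaps rather than just missing bookkeeping.

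First, the central analytic object of your plan is not available where you need it. The representation $H(z)=\int_1^\infty (F(s,z)-1)\,ds$ is only valid for $|z|<1$: since $F(s,z)\asymp C(z)(s-1)^{-z}$ as $s\to 1^+$, the $s$-integral diverges at the fixed endpoint $s=1$ once $\mathrm{Re}(z)\geq 1$, and the endpoint cannot be moved by contour deformation because it comes from the identity $1/(n\log n)=\int_1^\infty n^{-s}\,ds$. So the singularity analysis of $H$ at $z=2$ --- the whole source of the $2^{-k}$ scale --- presupposes an analytic continuation of $H$ to a neighborhood of $z=2$ that your representation does not provide; indeed, knowing that $H(z)-z/(1-z)$ extends past $|z|=1$ is essentially equivalent to the estimate $\eta(k)=1+O(\theta^k)$ you are trying to prove, so as written the argument is circular at its key step. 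Relatedly, the "simple pole of residue $-1$ at $z=1$" encodes $\eta(k)\to 1$, which is itself a nontrivial theorem, not something that falls out of \cref{lem: Erdos Eta} or of a formal Hankel-contour remark.

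Second, the local analysis at $(s,z)=(1,2)$ is based on a false premise. Peeling off the prime $2$ leaves $R(s,z)=\prod_{p\geq 3}(1-z/p^s)^{-1}$, which is analytic in $z$ on $|z|<3^{\mathrm{Re}(s)}$ for fixed $s>1$ but is \emph{not} jointly analytic at $(1,2)$: it still blows up like $(s-1)^{-z}$ (times an analytic factor) as $s\to 1^+$, so the claimed "second-order Taylor expansion of $R$ at $(1,2)$" does not exist. If $F$ near $(1,2)$ really were (integrable branch point) $\times$ (simple pole in $z$), the resulting singularity of $H$ at $z=2$ would be a simple pole and the correction would be of size $c\,2^{-k}$, contradicting the $d\,k^2\,2^{-k}$ main term in the statement. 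The $k^2$ factor (a third-order-pole-type singularity, with the $k\log(k+1)$ error reflecting logarithmic corrections) is produced precisely by the coalescence of the moving pole $z=2^s$ with the endpoint branch singularity at $s=1$ as $z\to 2$, and extracting it requires a genuinely two-variable, uniform-in-$s$ analysis (this is the technical heart of \cite{GLW2023}), which your sketch does not supply.
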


    \begin{remark}
        Since \( \eta_N(k) \leq \eta(k) \) for all \( N \in \N\), \cref{lem: Erdos Eta} holds for \( \eta_N(k) \) as well. Similarly, \cref{lem: GLM Estimate} implies \( \eta_N(k) < 1 \) for large \( k \). These properties are essential to the proof of \cref{thm: Double Log Convergence}.
    \end{remark} 

\section{Proof of \cref{thm: Double Log Convergence}}
\label{sec: Proof of Double Log Convergence}

    In the following, let \( L_N = 2 \log \log N \). We treat the terms of the ergodic averages up to and following \( L_{N} \) separately:
        \begin{equation}
        \label{eqn: double log split}
            \underset{n \in [N]}{\A^{\log \log}} f(T^{\Omega(n)}x) 
            = 
            \underset{k \in [N]}{\A^{\eta_{N}}} f(T^k x)
            = 
            \underbrace{\underset{k \in [L_{N}]}{\A^{\eta_{N}}} f(T^k x) }_{(1)} + \underbrace{\underset{k \in [L_{N},N]}{\A^{\eta_{N}}} f(T^k x)}_{(2)}
        \end{equation}

    The proof of \cref{thm: Double Log Convergence} is composed of three distinct parts. We first demonstrate convergence for \( L^\infty\) functions, a dense set of functions in \( L^1\). 

    \begin{prop}
    \label{prop: Double Log for Bdd}
        Let \( (X, \mu, T ) \) be an ergodic system. Then for any \( f \in L^\infty(\mu)\), 
        \[
            \lim_{N \to \infty} \Ellog f(T^{\Omega(n)}x) = \int f \,d\mu 
        \]
        for \(\mu \)-almost every \( x \in X\). 
    \end{prop}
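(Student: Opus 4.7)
The plan is to compare $\frac{1}{\log\log N}\sum_{k} \eta_N(k)\,f(T^k x)$ to the Birkhoff ergodic average over $\{1,\ldots, K_N\}$, $K_N := \lfloor \log\log N\rfloor$, by establishing an $\ell^1$-approximation of the weights $\eta_N$ by the indicator $\mathbbm{1}_{[1,K_N]}$. Re-indexing by $k = \Omega(n)$ gives
\[
    \Ellog f(T^{\Omega(n)} x)
    \;=\; \frac{1}{\log\log N}\sum_{k=1}^{\infty}\eta_N(k)\,f(T^k x),
\]
and for $f\in L^\infty$ the contribution from $k > L_N$ is at most $\|f\|_\infty \cdot \frac{1}{\log\log N}\sum_{k > L_N}\eta_N(k)$, which tends to $0$ by Equation \eqref{eqn: eta weight support}.

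\textbf{Core estimate.} The crux is to show
\[
    \sum_{k=1}^{L_N}\bigl|\eta_N(k) - \mathbbm{1}_{k \leq K_N}(k)\bigr|
    \;=\; o(\log\log N),
\]
which splits into $(A) := \sum_{k\leq K_N}|1-\eta_N(k)|$ and $(B) := \sum_{K_N < k \leq L_N}\eta_N(k)$. For $(B)$, rewrite $\sum_{K_N < k \leq L_N}\eta_N(k) = \sum_{n \leq N,\,\Omega(n)\in(K_N, L_N]}\tfrac{1}{n\log n}$ and apply the Hardy--Ramanujan Theorem (\cref{thm: Hardy Ramanujan}): for any $\varepsilon > 0$ there is $C \geq 1$ so that at most $\varepsilon N$ integers $n \leq N$ satisfy $|\Omega(n)-\log\log N| > C\sqrt{\log\log N}$. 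Combining this with $\sum_{n\leq N}\tfrac{1}{n\log n} = \log\log N + O(1)$ and a dyadic decomposition of the range of $n$ bounds $(B)$ by $\varepsilon\log\log N + O(\sqrt{\log\log N})$, hence $(B) = o(\log\log N)$. Given $(B)$, piece $(A)$ follows from $\sum_{k=1}^\infty \eta_N(k) = \log\log N + O(1)$, since
\[
    \sum_{k\leq K_N}(1-\eta_N(k))
    \;=\; K_N - \sum_{k\leq K_N}\eta_N(k)
    \;=\; \sum_{k > K_N}\eta_N(k) - O(1)
    \;=\; o(\log\log N),
\]
and the terms where $\eta_N(k) > 1$ (necessarily for $k$ in a finite set, with $\eta_N(k) \leq \eta(1)$ by \cref{lem: Erdos Eta}) contribute at most $O\bigl(\sum_k |1-\eta(k)|\bigr) = O(1)$ via the Gorodetsky--Lichtman--Wong asymptotic of \cref{lem: GLM Estimate}.

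\textbf{Conclusion and main obstacle.} Birkhoff's pointwise ergodic theorem applied to $f \in L^\infty \subset L^1(\mu)$ gives $\frac{1}{K_N}\sum_{k=1}^{K_N}f(T^k x) \longrightarrow \int f\,d\mu$ for $\mu$-a.e. $x$. Combining with the $\ell^1$-approximation and boundedness of $f$,
\[
    \frac{1}{\log\log N}\sum_{k=1}^\infty \eta_N(k)\,f(T^k x)
    \;=\; \frac{K_N}{\log\log N}\cdot\frac{1}{K_N}\sum_{k=1}^{K_N}f(T^k x) \;+\; o(1),
\]
and $K_N/\log\log N \to 1$ yields the stated limit. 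The main obstacle is the estimate $(B)$ at the sharper threshold $K_N \approx \log\log N$ rather than the looser $L_N = 2\log\log N$ for which \eqref{eqn: eta weight support} is ready-made: this forces the Hardy--Ramanujan density statement (originally a Ces\`aro count) to be transferred to the logarithmic setting, with the narrow transition window of width $\sqrt{\log\log N}$ absorbed into an $O(\sqrt{\log\log N})$ error by the uniform bound $\eta_N \leq \eta(1)$.
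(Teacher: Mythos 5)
Your argument is correct and follows essentially the same route as the paper's own proof: re-index by the value of $\Omega(n)$, truncate the tail of $\eta_N$ using a double-log transfer of the Hardy--Ramanujan estimate, use the Gorodetsky--Lichtman--Wong bound to control the finitely many $k$ with $\eta_N(k)\geq 1$, exploit $\sum_k\eta_N(k)=\log\log N+O(1)$ to force $\eta_N(k)\approx 1$ on the bulk, and conclude via Birkhoff. Your framing via an $\ell^1$-approximation of $\eta_N$ by $\mathbbm{1}_{[1,K_N]}$ (with $K_N=\lfloor\log\log N\rfloor$ rather than the paper's $L_N=\log\log N+C\sqrt{\log\log N}$) is slightly cleaner and sidesteps the paper's final renormalization step; the only part you leave loose is the transfer of the Hardy--Ramanujan count from Ces\`aro to $\tfrac{1}{n\log n}$-weighting, which your ``dyadic decomposition'' hint can be carried out but which the paper does by Abel summation in its Lemma~\ref{lem: Eta weight distribution}.
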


    The proof of \cref{prop: Double Log for Bdd} is delayed until \cref{subsec: Conv for Bdd}. We then demonstrate the existence of a suitable maximal inequality over the integers, whose proof can also be found in \cref{subsec: Conv for Bdd}.
    
    \begin{prop}
    \label{prop: MEI for loglog}
        There exists a \( D > 0 \) such that for any finitely supported non-negative function \( \varphi: \N \to \C \), 
        \[
            \# \Big\{ j \in \N \,:\, \sup_{N\in \N} \frac{1}{\log\log N} \sum_{k \leq L_N} \eta_N(k) \varphi(j+k)>1 \Big\} \leq D  \norm{\varphi}_{\ell^1}.
        \]
    \end{prop}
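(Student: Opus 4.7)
The plan is to dominate the maximal operator pointwise by a one-sided Hardy--Littlewood maximal operator on $\N$, for which a weak type $(1,1)$ inequality is classical. The essential input is the uniform bound on the Erd\H{o}s sums $\eta_N(k)$ given by \cref{lem: Erdos Eta}, combined with the fact that the window $\{k : k \leq L_N\}$ has length exactly $2\log\log N$, which is comparable to the normalizing factor.

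Since $\eta_N(k) \leq \eta(k)$ (as $\eta_N(k)$ is merely a partial sum of the series defining $\eta(k)$), \cref{lem: Erdos Eta} yields a constant $C_0 > 0$ with $\eta_N(k) \leq C_0$ for all $k, N \in \N$. Using this together with $\lfloor L_N \rfloor \leq L_N = 2 \log \log N$, I would estimate
\[
    \frac{1}{\log \log N} \sum_{k \leq L_N} \eta_N(k) \, \varphi(j+k)
    \;\leq\; \frac{C_0}{\log \log N} \sum_{k = 1}^{\lfloor L_N \rfloor} \varphi(j+k)
    \;\leq\; 2 C_0 \cdot M^+ \varphi(j),
\]
where
\[
    M^+ \varphi(j) := \sup_{m \in \N} \frac{1}{m} \sum_{k=1}^{m} \varphi(j+k)
\]
is the forward one-sided Hardy--Littlewood maximal operator on $\N$. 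Since this bound is uniform in $N$, taking the supremum on the left yields the pointwise domination $\sup_{N \in \N} \frac{1}{\log\log N} \sum_{k \leq L_N} \eta_N(k) \varphi(j+k) \leq 2 C_0 \, M^+ \varphi(j)$.

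From here, I would finish by invoking the classical weak $(1,1)$ inequality $\#\{j \in \N : M^+ \varphi(j) > \lambda\} \leq C' \lambda^{-1} \norm{\varphi}_{\ell^1}$, applied at the level $\lambda = 1/(2 C_0)$; this yields the proposition with $D = 2 C_0 C'$. There is no real obstacle in this approach: the Erd\H{o}s bound collapses the seemingly non-standard double-logarithmic weighting onto honest Ces\`aro averages of $\varphi$ over initial segments $\{j+1, \dots, j + \lfloor L_N \rfloor\}$, and the weak $(1,1)$ estimate for $M^+$ on $\N$ is textbook. The only points requiring care are extracting the uniform constant $C_0$ from \cref{lem: Erdos Eta} and verifying that $\lfloor L_N \rfloor / \log\log N \leq 2$ so that the normalized weighted sum really is dominated by an average over an initial segment.
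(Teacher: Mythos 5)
Your proof is correct, and it isolates exactly the two facts that make the proposition work: the uniform bound $\eta_N(k) \leq \eta(1)$ from \cref{lem: Erdos Eta}, and the identity $L_N = 2\log\log N$, which makes the window length comparable to the normalizing factor. The packaging, however, is genuinely different from the paper's. You dominate the maximal function pointwise by $2\eta(1)\, M^+\varphi(j)$ and then cite the classical weak $(1,1)$ inequality for the discrete one-sided Hardy--Littlewood operator. The paper instead runs the Vitali-type covering argument from scratch: it inductively extracts a maximal disjoint family of intervals $I_m = [j_m,\, j_m + L_{N_m}]$ covering the level set $E$, bounds $\#(E \cap I_m) \leq L_{N_m} = 2\log\log N_m \leq 2\eta(1)\sum_{j \in I_m}\varphi(j)$ using the same two facts you use, and sums over $m$. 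Your approach is more modular and immediately reveals that the operator in question is controlled by a standard one; the paper's is self-contained, in effect inlining the Rising Sun proof of the weak $(1,1)$ inequality it could have cited. Both routes yield the same constant $D = 2\eta(1)$, since the one-sided discrete maximal operator has weak $(1,1)$ constant exactly $1$.
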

    
    The Calder\'on transference principle (see for instance \cite{Calderon1968}) then implies \cref{prop: MEI for MPS}, the desired maximal inequality in an arbitrary measure-preserving system. A standard approximation argument, given below, then implies the desired convergence of Term (1) in Equation \eqref{eqn: double log split}.
    
    In \cref{subsec: Conv for L1}, we treat the more delicate case of pointwise convergence of the tail, Term (2) in Equation \eqref{eqn: double log split}. To accomplish this, we first reduce the pointwise averages to those along certain lacunary sequences. 
    
    \begin{lemma}\label{lem:reduction}
        For \( \rho \in \R \) and \( i \in \N \), set \( N_i
        =
        \lfloor2^{2^{\rho^i}}\rfloor.
        \) 
        Let \( f\geq 0 \) and suppose that there exists \( L \in \R \) such that
        \[ 
            \lim_{i \to \infty }\underset{k \in [N_i]}{\A^{\eta_{N_i}}} f(T^k x) 
            = 
            L
        \]
        for every \( \rho > 1 \). Then 
        \[ 
            \lim_{N \to \infty} \underset{k \in [N]}{\A^{\eta_N}} f(T^k x) 
            = 
            L.
        \] 
    \end{lemma}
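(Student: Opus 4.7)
My plan is to exploit the monotonicity of the underlying unweighted sums together with the slow growth of the normalization. First I would unfold the weighted average using the definition of $\eta_N$, obtaining
\[
\underset{k \in [N]}{\A^{\eta_N}} f(T^k x) \;=\; \frac{1}{W(N)} \sum_{n \leq N} \frac{f(T^{\Omega(n)}x)}{n \log n}, \qquad W(N) := \sum_{k} \eta_N(k) = \sum_{n \leq N} \frac{1}{n \log n} = \log \log N + O(1).
\]
Since $f \geq 0$, the unnormalized sum $S_N(x) := \sum_{n \leq N} \frac{f(T^{\Omega(n)}x)}{n \log n}$ is non-decreasing in $N$, and the task reduces to showing that $S_N(x)/\log \log N \to L$ given the convergence of $S_{N_i}(x)/\log \log N_i$ to $L$ for every $\rho > 1$ (equivalence of these two convergences is immediate because $W(N)/\log\log N = 1+O(1/\log\log N)$).

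Next I would run a sandwich/doubling argument. Fix $\rho > 1$ and, for each large $N$, pick $i$ with $N_i \leq N < N_{i+1}$. Monotonicity of $S_N(x)$ in $N$ gives
\[
\frac{\log \log N_i}{\log \log N_{i+1}} \cdot \frac{S_{N_i}(x)}{\log \log N_i}
\;\leq\; \frac{S_N(x)}{\log \log N} \;\leq\;
\frac{\log \log N_{i+1}}{\log \log N_i} \cdot \frac{S_{N_{i+1}}(x)}{\log \log N_{i+1}}.
\]
A direct computation from $\log N_i = 2^{\rho^i} \log 2 + O(1)$ gives $\log \log N_{i+1}/\log\log N_i \to \rho$. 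The hypothesis (at this $\rho$) applied to the sequences $(N_i)$ and $(N_{i+1})$ --- which are the same lacunary sequence, merely reindexed --- yields
\[
\frac{L}{\rho} \;\leq\; \liminf_{N \to \infty} \frac{S_N(x)}{\log \log N} \;\leq\; \limsup_{N \to \infty} \frac{S_N(x)}{\log \log N} \;\leq\; \rho L.
\]

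Finally, since $\rho > 1$ was arbitrary, letting $\rho \downarrow 1$ pins the limit exactly at $L$, which completes the reduction after the harmless replacement of $W(N)$ by $\log \log N$. The only point that requires any care is that both sides of the sandwich must be controlled using the \emph{same} $\rho$, so that a single factor of $\rho$ (not $\rho^2$) appears in the final inequality; aside from this bookkeeping, the argument is entirely standard, and I do not anticipate a serious obstacle.
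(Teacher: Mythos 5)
Your argument is correct and is essentially the paper's own proof: both fix $\rho>1$, sandwich $N$ between consecutive $N_i$, exploit the monotonicity in $N$ of the unnormalized sum $S_N(x)=\sum_{n\leq N} f(T^{\Omega(n)}x)/(n\log n)$ to get bounds with a single factor of $\rho$ on each side, and let $\rho\downarrow 1$. The only difference is that you spell out the ratio computation $\log\log N_{i+1}/\log\log N_i \to \rho$ explicitly, whereas the paper simply asserts the resulting two-sided bound; your version is, if anything, slightly more careful.
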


    We then show that for any \( \rho > 1\), Term \((2) \) converges to zero pointwise. 
    
    \begin{prop}
    \label{prop: Tail Term}
        Let \( (X, \mu, T ) \) be an ergodic system. For \( \rho > 1\), define \( N_i := 2^{2^{\rho^i}} \). Then for any \( f \in L^1(\mu)\), 
        \[
            \lim_{i \to \infty} \underset{k \in [L_{N_i},N_i]}{\A^{\eta_{N_i}}} f(T^k x) = 0
        \]
        for \( \mu-\)almost every \( x \in X\).
    \end{prop}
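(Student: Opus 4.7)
Define $T_i f(x) := \frac{1}{\log\log N_i} \sum_{L_{N_i} \le k \le N_i} \eta_{N_i}(k) f(T^k x)$. My plan is to prove $T_i f \to 0$ almost everywhere via a Borel--Cantelli argument exploiting the extremely rapid growth of $N_i = 2^{2^{\rho^i}}$. Since the weights $\eta_{N_i}(k)$ are nonnegative, $|T_i f| \le T_i|f|$, and by Fubini together with the $T$-invariance of $\mu$,
\[
\int_X T_i |f| \, d\mu \;\le\; \|f\|_1 \, \varepsilon_i, \qquad \varepsilon_i := \frac{1}{\log\log N_i} \sum_{k \ge L_{N_i}} \eta_{N_i}(k).
\]
Markov's inequality then gives $\mu\{|T_i f| > \lambda\} \le \|f\|_1 \varepsilon_i/\lambda$ for every $\lambda > 0$; once I know $\sum_i \varepsilon_i < \infty$, Borel--Cantelli yields $\limsup_i |T_i f| \le \lambda$ a.e., and letting $\lambda \to 0$ along a countable sequence delivers $T_i f \to 0$ a.e.

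The crux is therefore to show $\sum_i \varepsilon_i < \infty$, which strengthens the qualitative estimate $\varepsilon_N \to 0$ coming from \eqref{eqn: eta weight support}. I would extract polynomial-in-$\log N$ decay of $\varepsilon_N$ as follows. Using the identity
\[
\sum_{k \ge L_N} \eta_N(k) \;=\; \sum_{\substack{n \le N \\ \Omega(n) \ge L_N}} \frac{1}{n \log n}
\]
and the exponential Chebyshev bound $\mathbbm{1}_{\Omega(n) \ge L_N} \le z^{\Omega(n)-L_N}$ for any fixed $z \in (1,2)$, together with a standard Selberg--Delange asymptotic $\sum_{n \le N} z^{\Omega(n)}/(n \log n) \ll_z (\log N)^{z-1}$, I obtain
\[
\sum_{k \ge L_N} \eta_N(k) \;\ll_z\; z^{-L_N}(\log N)^{z-1} \;=\; (\log N)^{z-1-2\log z},
\]
and the exponent $z-1-2\log z$ is strictly negative for every $z \in (1,2)$, yielding genuine polynomial decay of $\varepsilon_N$ in $\log N$.

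Along the subsequence $\log N_i = 2^{\rho^i}$, this polynomial decay translates to $\varepsilon_i \ll 2^{-c\cdot 2^{\rho^i}}$ for some $c>0$, which is summable with enormous slack, completing the Borel--Cantelli step. The main obstacle is thus securing \emph{any} polynomial-in-$\log N$ decay of $\varepsilon_N$, rather than the bare qualitative bound in \eqref{eqn: eta weight support}; this is handled by the standard exponential moment technique combined with the Selberg--Delange machinery on sums of $z^{\Omega(n)}$, which is well established in analytic number theory and requires only mild care around the $p=2$ pole (hence the restriction $z<2$). The doubly-exponential spacing of $N_i$ leaves so much room that neither an optimal rate nor the optimal constant is needed.
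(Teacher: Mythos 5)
Your argument is correct, and its overall skeleton is the same as the paper's: bound $\int T_i|f|\,d\mu$ by $\|f\|_1\,\varepsilon_i$ using $T$-invariance, apply Markov, and run Borel--Cantelli along the doubly exponential sequence $N_i$ (the paper uses decreasing thresholds $\lambda_i=1/i$ rather than a fixed $\lambda$ sent to $0$ afterwards; both are fine). Where you genuinely diverge is in the key rate estimate for $\varepsilon_N=\frac{1}{\log\log N}\sum_{k\ge L_N}\eta_N(k)$. The paper (\cref{lem: eta rate}) stays within the Erd\H{o}s--Kac circle of ideas: it invokes the R\'enyi--Tur\'an quantitative form of the Erd\H{o}s--Kac theorem with a moving threshold $C_m=\sqrt{\log\log m}$ and a partial-summation argument, obtaining only $\varepsilon_N=O\big((\log\log N)^{-1/4}\big)$, which is already summable along $N_i$ because $\log\log N_i\asymp\rho^i$ grows geometrically. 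You instead use the exponential-moment (Chernoff) bound $\mathbbm{1}_{\Omega(n)\ge 2\log\log N}\le z^{\Omega(n)-2\log\log N}$ together with the Selberg--Delange estimate $\sum_{n\le N}z^{\Omega(n)}/(n\log n)\ll_z(\log N)^{z-1}$ for $1<z<2$, giving $\sum_{k\ge L_N}\eta_N(k)\ll(\log N)^{z-1-2\log z}$ with exponent strictly negative on $(1,2)$; this computation is correct (the restriction $z<2$ to avoid the $p=2$ factor is exactly the right caveat, and the weighted Selberg--Delange bound follows from the classical unweighted one by partial summation), and it buys a genuinely polynomial-in-$\log N$ decay rate, far stronger than the paper needs or proves. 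One small slip: along $N_i=2^{2^{\rho^i}}$ the bound $(\log N_i)^{-c}$ is of size roughly $2^{-c\rho^i}$, not $2^{-c\cdot 2^{\rho^i}}$ as you wrote; this is harmless, since $\sum_i 2^{-c\rho^i}<\infty$ with ample room, so your Borel--Cantelli step goes through unchanged.
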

    
    We now present the proof of \cref{thm: Double Log Convergence}, assuming Propositions \ref{prop: MEI for MPS} and \ref{prop: Double Log for Bdd} and \cref{lem:reduction}.  

    \begin{proof}[Proof of \cref{thm: Double Log Convergence}]
        Let \( (X, \mu, T) \) be an ergodic system and take \( f \in L^1(\mu) \). For \( N \in \N\),
        \begin{equation}
        \label{eqn: double log 1}
            \underset{n \in [N]}{\A^{\log \log}} f(T^{\Omega(n)}x) 
            = 
            \underset{k \in [L_{N}]}{\A^{\eta_{N}}} f(T^k x)  + \underset{k \in [L_{N},N]}{\A^{\eta_{N}}} f(T^k x).
        \end{equation}
        Let \( \varepsilon > 0 \) and take \( g \in L^\infty(\mu) \) such that \( \norm{f-g}_1 < \varepsilon^2 \). Then 
        \[
            \norm{\underset{k \in [L_{N}]}{\A^{\eta_{N}}} f(T^k x) - \underset{k \in [L_{N}]}{\A^{\eta_{N}}} g(T^k x)}_1 \leq \norm{f-g}_1,
        \]
        so that \( \big|\int f \, d\mu - \int g \, d\mu \big| < \varepsilon^2 \). Observe that
        \begin{align*}
            \limsup_{N \to \infty} \Big| &\underset{k \in [L_{N}]}{\A^{\eta_{N}}} f(T^k x) - \int f \, d\mu \Big| 
            \\
            &\leq 
            \sup_{n \geq 1} \Big| \underset{k \in [L_{N}]}{\A^{\eta_{N}}} (f-g)(T^k x) \Big| + \limsup_{N \to \infty} \Big| \underset{k \in [L_{N}]}{\A^{\eta_{N}}} g(T^k x) - \int g \,d\mu \Big| + \Big|\int f \, d\mu - \int g \,d\mu \Big| 
        \end{align*}
        By \cref{prop: Double Log for Bdd},
        \[
            \limsup_{N \to \infty} \Big| \underset{k \in [L_{N}]}{\A^{\eta_{N}}} g(T^k x) - \int g \,d\mu \Big| = 0.
        \]
        Hence
        \begin{align*}
            \mu \Big\{ x \in X \,:\, \limsup_{N \to \infty} &\Big| \underset{k \in [L_{N}]}{\A^{\eta_{N}}} f(T^k x) \Big| > 2\varepsilon \Big\}
            \\
            & \leq 
            \mu \Big\{ x \in X \,:\, \sup_{n \geq 1} \Big| \underset{k \in [L_{N}]}{\A^{\eta_{N}}} (f-g)(T^k x) \Big| + \Big|\int f \, d\mu - \int g \,d\mu \Big| > 2\varepsilon \Big\}
            \\
            & \leq
            \mu \Big\{ x \in X \,:\, \sup_{n \geq 1} \Big| \underset{k \in [L_{N}]}{\A^{\eta_{N}}} (f-g)(T^k x) \Big| > \varepsilon \Big\}
            \\
            & \leq 
            \frac{C}{\varepsilon} \norm{f-g}_1
            \\
            & \leq
            C\varepsilon,
        \end{align*}
        where the second to last inequality follows from \cref{prop: MEI for MPS}. Hence
        \begin{equation}
        \label{eqn: double log 2}
            \lim_{N \to \infty} \underset{k \in [L_{N}]}{\A^{\eta_{N}}} f(T^k x) = \int f \, d\mu
        \end{equation}
        for almost every \( x \in X\). 
        
        Now, let \( \rho > 1\). By \cref{lem:reduction}, it is enough to consider Equation \eqref{eqn: double log 1} along the sequence \( N_i = \floor{2^{2^{\rho^i}}}\). Since Equation \eqref{eqn: double log 2} holds along the integers, it also holds along the sequence \( N_i \). Then, by Equation \eqref{eqn: double log 2} and \cref{prop: Tail Term}, 
        \[
            \lim_{i \to \infty} \underset{n \in [N_i]}{\A^{\log \log}} f(T^{\Omega(n)}x) 
            =
            \lim_{i \to \infty} \underset{k \in [L_{N_i}]}{\A^{\eta_{N_i}}} f(T^k x)  + \lim_{i \to \infty} \underset{k \in [L_{N_i},N_i]}{\A^{\eta_{N_i}}} f(T^k x)
            = 
            \int f \, d\mu
        \]
        for \(\mu-\)almost every \( x \in X\), completing the proof.
    \end{proof}

    \subsection{Proofs of \cref{prop: Double Log for Bdd} and \cref{prop: MEI for loglog}}
    \label{subsec: Conv for Bdd}
        
        To prove \cref{prop: Double Log for Bdd}, we require the following lemma, guaranteeing that, for large \( N\), the weights \( \eta_N(k) \) are largely supported on an initial segment of \( [1, N]\). For \( N \in \N \), let \( L_N = \log \log N + C \sqrt{\log \log N} \), where \( C \) is the constant guaranteed by \cref{thm: Hardy Ramanujan}.
         
        \begin{lemma}
        \label{lem: Eta weight distribution}
            Let \( \varepsilon > 0\). The function \( \eta_N(k) \) satisfies
            \begin{equation*}
                \sum_{k \leq L_N} \eta_N(k)
                \geq 
                (1-\varepsilon) \log \log N.
            \end{equation*}
            for sufficiently large \( N\).
        \end{lemma}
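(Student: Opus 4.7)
\smallskip

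\noindent\textbf{Proof proposal.} The plan is to compute the total mass $\sum_{k \geq 1} \eta_N(k)$ exactly and to show that the tail $\sum_{k > L_N} \eta_N(k)$ is a small fraction of $\log \log N$. Swapping the order of summation,
\[
\sum_{k \geq 1} \eta_N(k) = \sum_{k \geq 1} \sum_{n \in \P_k \cap [N]} \frac{1}{n \log n} = \sum_{n = 2}^{N} \frac{1}{n \log n} = \log \log N + O(1),
\]
by partitioning $[2, N]$ according to the value of $\Omega(n)$ and applying an integral comparison. Therefore it suffices to establish that, for a sufficiently large choice of the constant $C$ appearing in $L_N$ (depending on $\varepsilon$), one has
\[
\sum_{\substack{n \leq N \\ \Omega(n) > L_N}} \frac{1}{n \log n} \leq \tfrac{\varepsilon}{2} \log \log N
\]
for all large enough $N$.

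The key observation is that the function $M \mapsto L_M = \log \log M + C \sqrt{\log \log M}$ is monotonically increasing, so for every $M \leq N$,
\[
\{ n \leq M : \Omega(n) > L_N \} \subseteq \{ n \leq M : \Omega(n) > L_M \}.
\]
By the Hardy--Ramanujan theorem (\cref{thm: Hardy Ramanujan}) together with its associated remark, choosing $C$ sufficiently large guarantees that the density of the right-hand set is at most $\varepsilon'$ for all $M$ above some threshold $M_0 = M_0(\varepsilon')$, where $\varepsilon' > 0$ can be made arbitrarily small.

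I would then estimate the tail by a dyadic decomposition. Writing
\[
\sum_{\substack{n \leq N \\ \Omega(n) > L_N}} \frac{1}{n \log n} = \sum_{j = 0}^{\lceil \log_2 N\rceil} \sum_{\substack{2^j < n \leq 2^{j+1} \\ \Omega(n) > L_N}} \frac{1}{n \log n} \leq \sum_{j = 0}^{\lceil \log_2 N\rceil} \frac{\#\{ n \leq 2^{j+1} : \Omega(n) > L_N\}}{2^j \log(2^j)},
\]
the terms with $2^{j+1} \leq M_0$ contribute at most $\sum_{n \leq M_0} \frac{1}{n \log n} = O_{\varepsilon}(1)$. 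For the remaining dyadic blocks, the bound above together with Hardy--Ramanujan at scale $2^{j+1}$ yields
\[
\frac{\#\{ n \leq 2^{j+1} : \Omega(n) > L_N\}}{2^j \log(2^j)} \leq \frac{\varepsilon' \cdot 2^{j+1}}{2^j \cdot j \log 2} = \frac{2 \varepsilon'}{j \log 2}.
\]
Summing over $j \leq \log_2 N$ gives a total contribution of order $\varepsilon' \log \log N$, so choosing $C$ large enough that $\varepsilon'$ is a small multiple of $\varepsilon$ delivers the required estimate.

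The delicate point is ensuring that the dyadic bookkeeping is genuinely uniform: the Hardy--Ramanujan density bound must hold at every dyadic scale $2^{j+1}$ with the same constant $C$, and the finite contribution from dyadic scales below the Hardy--Ramanujan threshold $M_0$ must be absorbed into the $O(1)$ term rather than spoiling the $\varepsilon \log \log N$ bound. Once $C$ is fixed large enough for the chosen $\varepsilon$, taking $N$ large makes the $O_\varepsilon(1)$ remainder negligible compared to $\log \log N$, and combining with the exact identity for the total mass completes the proof.
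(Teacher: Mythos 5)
Your proposal is correct and its skeleton matches the paper's: both arguments compute the total mass $\sum_k \eta_N(k)=\sum_{2\le n\le N}\tfrac1{n\log n}=\log\log N+O(1)$ (implicitly in the paper) and then bound the tail $\sum_{k> L_N}\eta_N(k)$ by $O(\varepsilon)\log\log N$, the key point in both cases being the monotonicity observation that for $m\le N$ the condition $\Omega(n)>L_N$ forces $\Omega(n)>L_m$, so the Hardy--Ramanujan bound (\cref{thm: Hardy Ramanujan}, with the remark allowing centering at $\log\log m$) applies uniformly at every intermediate scale above a threshold, with the small scales absorbed into an $O_\varepsilon(1)$ term. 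Where you differ is the mechanism transferring these density bounds to the $\tfrac1{n\log n}$-weighted sum: the paper uses an exact Abel summation, writing the tail as a weighted average of the Ces\`aro densities $\tfrac1m\sum_{n\le m}\mathbbm{1}_{[L_N,\infty)}(\Omega(n))$ with weights $m\big(\tfrac1{m\log m}-\tfrac1{(m+1)\log(m+1)}\big)$ summing to $\log\log N+O(1)$, which yields the clean bound $\varepsilon\log\log N+O_\varepsilon(1)$; your dyadic decomposition is conceptually the same partial-summation idea but lossier by absolute constants (the factor $2/\log 2$), which you correctly compensate for by taking the Hardy--Ramanujan constant $C$ for a smaller $\varepsilon'$ --- harmless, since $C$ depends on $\varepsilon$ throughout the paper as well. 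Two cosmetic points to fix in a write-up, neither a real gap: the $j=0$ block has $\log(2^j)=0$, so the first block or two should be absorbed into the $O_\varepsilon(1)$ term directly; and in the top dyadic block $2^{j+1}$ may exceed $N$, where the inclusion $\{\Omega(n)>L_N\}\subseteq\{\Omega(n)>L_{2^{j+1}}\}$ reverses, so either truncate the blocks at $N$ and apply Hardy--Ramanujan at scale $N$ to the last partial block, or note that $L_{2N}-L_N=o(1)$ so the bound survives with constant $C-1$.
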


        \begin{remark}
            Notably, there are non-negligible contributions to the support of \( \eta_N(k) \) in the interval \( [1, \log \log N - C \sqrt{\log \log N}]\). Thus, \cref{lem: Eta weight distribution} cannot be strengthened to \( k \) only within the interval \( I_N \), as in Equation \eqref{eqn: Weight Support}. In fact, if it could, \( \Omega(n) \) would be inherently strong sweeping out.
        \end{remark}

        \begin{proof}
            Let \( \varepsilon > 0 \) and let \( C > 0 \) be that given by the Hardy-Ramanujan Theorem. For \( N \in \N \), define \(L_N := \log \log N + C \sqrt{\log \log N} \) and \( J_N := [L_N \,,\, \infty) \). Then
            \begin{align}
            \label{eqn: etaa 1}
                \sum_{k \geq L_N} \eta_N(k) 
                &= 
                \sum_{n \leq N} \frac{1}{n \log n} \mathbbm{1}_{J_N}(\Omega(n)) 
                \\
                \notag &= \frac{1}{N \log N} \sum_{n \leq N} \mathbbm{1}_{J_N}(\Omega(n)) \\ 
                \notag & \quad + \sum_{m \leq N-1} \Bigg( \frac{1}{m \log m} - \frac{1}{(m+1) \log (m+1) }\Bigg) \sum_{n \leq m} \mathbbm{1}_{J_N}(\Omega(n))
            \end{align}
            By the Hardy-Ramanujan Theorem, there exists \( N_0 \in \N \) such that for \( m \geq N_0\), 
            \[
                \frac{1}{m} \sum_{n \leq m } \mathbbm{1}_{(I_m)^c} (\Omega(n)) 
                = 
                \frac{1}{m} \sum_{k \in (I_m)^c} \pi_m(k) 
                \leq 
                \varepsilon,  
            \]
            where \( I_m = [ \log \log m - C\sqrt{\log \log m} \,,\,\log \log m + C\sqrt{\log \log m}]\). Take \( N > N_0 \). Notice that \( J_N \subset (I_m)^c \) for all \( m \leq N \). Then
            \begin{align}
            \label{eqn: etaa 2}
                \notag \frac{1}{\log N} \Bigg( \frac{1}{N}\sum_{n \leq N} \mathbbm{1}_{J_N}(&\Omega(n)) \Bigg) 
                + \sum_{m \leq N-1} \Bigg( \frac{1}{m \log m} - \frac{1}{(m+1) \log (m+1) }\Bigg) \Bigg( \sum_{n \leq m} \mathbbm{1}_{J_N}(\Omega(n)) \Bigg) 
                \\
                \notag &\leq 
                \varepsilon \Bigg( \frac{1}{\log N} + \sum_{m = N_0}^{N-1} m \Bigg( \frac{1}{m \log m} - \frac{1}{(m+1) \log (m+1) }\Bigg) \Bigg) + C_\varepsilon
                \\
                &\leq \varepsilon \sum_{m \leq N} \frac{1}{m \log m} + C_\varepsilon. 
            \end{align} 
            Now, take \( N \) large enough so that \( C_\varepsilon \leq \varepsilon \log \log N \). Then Equations \eqref{eqn: etaa 1} and \eqref{eqn: etaa 2} imply
            \[
                \sum_{k \geq L_N} \eta_N(k) \leq 2 \varepsilon \log \log N,
            \]
            and we are done. 
        \end{proof}

        \begin{proof}[Proof of \cref{prop: Double Log for Bdd}]
            Let \( (X, \mu , T )\) be an ergodic system and \( f \in L^\infty(\mu)\). Set \( M = \|f\|_\infty \). Regrouping by value of \( \Omega(n) \),
                \[
                    \Ellog f(T^{\Omega(n)}x) = \underset{k \in [N]}{\A^{\eta_N}} f(T^k x).
                \]
                Let \( \varepsilon > 0\). Define \( L_N := \log \log N + C \sqrt{\log \log N}\). By \cref{lem: Eta weight distribution}, 
                \[
                    \Big| \underset{k \in [N]}{\A^{\eta_N}} f(T^k x) - \underset{k \in [L_N]}{\A^{\eta_N}} f(T^k x) \Big| 
                    = 
                    \frac{M}{\log \log N} \sum_{k = L_N}^N \eta_N(k) \leq M \varepsilon. 
                \]
                Then 
                \begin{equation}
                    \underset{k \in [N]}{\A^{\eta_N}} f(T^k x) = \underset{k \in [L_N]}{\A^{\eta_N}} f(T^k x) + O(\varepsilon).  
                \end{equation}
                We now proceed by comparison of the (slightly altered) averages \( \underset{k \in [L_N]}{\A} \eta_N(k) f(T^k x) \) to the standard Birkhoff averages. The justification for this alteration is provided at the end of the proof. 
                
                By \Cref{lem: GLM Estimate}, there exists \( K_0 > 0\) such that \( \eta_N(k) < 1 \) for all \( k \geq K_0 \). Then 
                \begin{align}
                \label{eqn: LOGLOG1}
                    \notag \Big| \underset{k \in [L_N]}{\A} f(T^k x) - \underset{k \in [L_N]}{\A} \eta_N(k) f(T^k x) \Big| 
                    &= 
                    \frac{1}{L_N} \Big|\sum_{k \leq L_N }\big(1- \eta_N(k)\big) f(T^ k x)\Big|
                    \\
                    \notag &\leq 
                    \frac{M}{L_N} \sum_{k \leq L_N }|1- \eta_N(k)|
                    \\
                    & = \frac{M}{L_N} \sum_{k \leq K_0-1} |1 - \eta_N(k) | + \frac{M}{L_N} \sum_{k = K_0}^{L_N} (1 - \eta_N(k)).
                \end{align}
                By \cref{lem: Erdos Eta}, \( \eta_N(k) \leq \eta(1) \) for all \(k, N \in \N\), so that 
                \begin{equation*}
                    \sum_{k \leq K_0 -1 }|1-\eta_N(k)|
                    \leq 
                    K_0 - 1 + \sum_{k \leq K_0 -1} \eta(1)
                    = (K_0-1)(1+\eta(1)).
                \end{equation*}
                Hence
                \begin{equation}
                \label{eqn: LOGLOG2}
                    \lim_{N \to \infty} \frac{M}{L_N} \sum_{k \leq K_0-1} |1 - \eta_N(k) | = 0.
                \end{equation}
                Additionally, by \cref{lem: Eta weight distribution},  
                \begin{align*}
                    \sum_{k = K_0}^{L_N} (1-\eta_N(k))
                    &= 
                    L_N - K_0 - \sum_{k \leq L_N}\eta_N(k)+ \sum_{k \leq K_0 }\eta_N(k)
                    \\
                   &\leq 
                   L_N - K_0 - (1- \varepsilon)  \log \log N + D
                    \\
                    &\leq 
                    \varepsilon \log \log N + C \sqrt{ \log \log N}- K_0 + D, 
                \end{align*}
                where \( D:= \sum_{k \leq K_0} \eta(k) \). Then, 
                \begin{equation}
                \label{eqn: LOGLOG3}
                    \frac{M}{L_N}\sum_{k = K_0}^{L_N} (1-\eta_N(k)) = O(\varepsilon).  
                \end{equation}
                Combining Equations \eqref{eqn: LOGLOG1}, \eqref{eqn: LOGLOG2}, and \eqref{eqn: LOGLOG3}, 
                \begin{align}
                \label{eqn: LOGLOG4}
                    \underset{k \in [L_N]}{\A} \eta_N(k) f(T^k x) 
                    = 
                    \underset{k \in [L_N]}{\A} f(T^k x) + O(\varepsilon)
                    =
                    \int f \, d\mu
                    +
                    O(\varepsilon), 
                \end{align}
                as desired. Our alteration of the normalization from \( 1/\log\log N \) to \( 1/L_N\) does not affect Equation \eqref{eqn: LOGLOG4} since  
                \begin{align*}
                    \underset{k \in [L_N]}{\A^{\eta_N}} f(T^k x) - \underset{k \in [L_N]}{\A} \eta_N(k) f(T^k x)
                    &=
                    \Big( \frac{1}{\log \log N } - \frac{1}{L_N} \Big) \sum_{k \leq L_N} \eta_N(k) f(T^k x) 
                    \\
                    &=
                    \frac{1}{\sqrt{\log\log N}} \underset{k \in [L_N]}{\A} \eta_N(k) f(T^k x), 
                \end{align*} 
                so that
                \[
                    \underset{k \in [L_N]}{\A^{\eta_N}} f(T^k x) 
                    = 
                    \underset{k \in [L_N]}{\A} \eta_N(k) f(T^k x) \Big (1 + O\Big(\frac{1}{\sqrt{\log\log N}}\Big)\Big).
                \]
        \end{proof}

        \begin{proof}[Proof of \cref{prop: MEI for loglog}]  
            Let  \( \varphi \geq 0 \) be a  finitely supported function on the integers. Define \( L_N := 2 \log \log N \) and set
            \[
                E := \Big\{ j \in \N \,:\, \sup_{N\in\N} \frac{1}{\log\log N} \sum_{k \leq L_N} \eta_N(k) \varphi(j+k)>1 \Big\}.
            \]
            We want to find a constant \( D > 0\), not depending on \( \varphi \), such that \( \#E \leq D \norm{\varphi}_{\ell^1} \). We proceed inductively. Set \( E_1 = E\). Take \( j_1 \) to be the minimal element of \( E_1\). By definition of \( E \), there is an integer \( N_1 \) such that 
            \begin{equation}
            \label{eqn: MEI Eta1}
                \sum_{k \leq L_{N_1}} \eta_{N_1}(k) \varphi(j_1+k)
                >
                \log \log N_1.
            \end{equation}
            Set \( I_1 = [j_1, j_1 + L_{N_1}] \). Then
            \begin{equation}
            \label{eqn: MEI Eta2}
                \# (E \cap I_1) \leq L_{N_1} = 2 \log \log N_1.   
            \end{equation}
            Additionally, by \cref{lem: Erdos Eta}, 
            \begin{equation}
            \label{eqn: MEI Eta3}
                \sum_{k \leq L_{N_1}} \eta_{N_1}(k) \varphi(j_1+k)
                \leq 
                \sum_{k \leq L_{N_1}} \eta(k) \varphi(j_1+k)
                \leq 
                \eta(1) \sum_{k \leq L_{N_1}} \varphi(j_1+k).
            \end{equation}
            Combining, Equations \eqref{eqn: MEI Eta1}, \eqref{eqn: MEI Eta2}, and \eqref{eqn: MEI Eta3},  
            \[
                \# (E \cap I_1) \leq 2 \,\eta(1) \sum_{j \in I_1} \varphi(j). 
            \]
            Now, set \( E_2 = E_1 \setminus I_1 \). Take \( j_2 \) to be the minimal element of \( E_2\) and take \( N_2 \) such that 
            \begin{equation}
            \label{eqn: MEI Eta4}
                \sum_{k \leq L_{N_2}} \eta_{N_2}(k) \varphi(j_2+k)
                >
                \log\log N_2.
            \end{equation}
            Set \( I_2 = [j_2, j_2 + L_{N_2}] \). Notice that, by construction, \( I_1 \) and \( I_2 \) are disjoint intervals. An identical argument as before shows that 
            \[
                \# (E \cap I_2) \leq 2 \,\eta(1) \sum_{j \in I_2} \varphi(j). 
            \] 
            Iterating, this process terminates after a finite number of steps, say \( K\), since \( \varphi \) is finitely supported. This yields \( K \) intervals \( I_k \), each satisfying 
            \[
                \# (E \cap I_k) \leq 2 \,\eta(1) \sum_{j \in I_k} \varphi(j). 
            \]
            By construction, these intervals are pairwise disjoint. Additionally, since the integer \( j_k \) was chosen to be minimal at each step, \( E \sse \cup_{k \leq K} I_k \). Hence 
            \[
                \# E = \sum_{k\leq K} \# ( E \cap I_k) \leq 2 \,\eta(1) \sum_{k \leq K} \sum_{j \in I_k} \varphi(j) \leq 2 \,\eta(1) \norm{\varphi}_{\ell^1},
            \]
            Taking \( D = 2 \, \eta(1) \), we are done. 
        \end{proof}

    \subsection{Proofs of \cref{lem:reduction} and \cref{prop: Tail Term}}
    \label{subsec: Conv for L1}

        \begin{proof}[Proof of \cref{lem:reduction}]
        Fix \( \rho > 1 \) and define \( N_i = N_i(\rho) := \lfloor 2^{2^{\rho^i}}\rfloor \). Let \( N \) be such that \( N_i \leq N < N_{i+1} \). Observe that 
        \begin{align*}
            \frac{1}{\rho} \cdot \underset{k \in [N_{i}]}{\A^{\eta_{N_{i}}}} f(T^k x) 
            \leq 
            \underset{k \in [N]}{\A^{\eta_N}} f(T^k x) 
            \leq 
            \rho \cdot \underset{k \in [N_{i+1}]}{\A^{\eta_{N_{i+1}}}} f(T^k x). 
        \end{align*}
        This implies
        \begin{align*}
            \frac{1}{\rho} \cdot \lim_{i \to \infty} \underset{k \in [N_{i}]}{\A^{\eta_{N_{i}}}} f(T^k x) 
            \leq 
            \liminf_{N \to \infty} \underset{k \in [N]}{\A^{\eta_N}} f(T^k x) 
            \leq 
            \limsup_{N \to \infty} \underset{k \in [N]}{\A^{\eta_N}} f(T^k x) 
            \leq 
            \rho \cdot \lim_{i \to \infty} \underset{k \in [N_{i+1}]}{\A^{\eta_{N_{i+1}}}} f(T^k x). 
        \end{align*}
        By assumption, \( \lim_{i \to \infty} \underset{k \in [N_{i}]}{\A^{\eta_{N_{i}}}} f(T^k x) = L \). Hence, we have
        \begin{equation*}
            \frac{L}{\rho} \leq \liminf_{N}\underset{k \in [N]}{\A^{\eta_N}} f(T^k x) 
            \leq 
            \limsup_{N}\underset{k \in [N]}{\A^{\eta_N}} f(T^k x) 
            \leq 
            \rho L.
        \end{equation*}
        Letting $\rho \to 1$, we are done.
    \end{proof}

        We now prove \cref{prop: Tail Term}. By application of the Markov Inequality, it is not hard to show that the desired averages converge to zero in the \( L^1\)-norm. To demonstrate pointwise convergence in \( L^1 \), we require information regarding the rate of convergence in \cref{lem: Eta weight distribution}.

        \begin{lemma}
        \label{lem: eta rate}
            Define \( L_N := 2 \log \log N \). The function \( \eta_N(k) \) satisfies
            \[
                \frac{1}{\log \log N} \sum_{L_N \leq k \leq N} \eta_N(k) = O \Big(\frac{1}{\sqrt[\leftroot{-2}\uproot{2}4]{\log \log N}} \Big). 
            \]
        \end{lemma}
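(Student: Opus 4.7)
My plan is to convert the sum to a weighted tail count over \( n \), apply Abel summation, and then bound the resulting counting function using Rényi--Turán. Interchanging summation,
\[
\sum_{L_N \leq k \leq N}\eta_N(k) = \sum_{\substack{n \leq N\\ \Omega(n)\geq L_N}}\frac{1}{n\log n},
\]
and setting \( B(m) := \#\{n \leq m : \Omega(n) \geq L_N\} \), Abel summation yields
\[
\sum_{\substack{n \leq N\\ \Omega(n)\geq L_N}}\frac{1}{n\log n} = \frac{B(N)}{N\log N} + \sum_{m=1}^{N-1}B(m)\Big(\frac{1}{m\log m}-\frac{1}{(m+1)\log(m+1)}\Big),
\]
with telescoping difference of order \( 1/(m^2\log m) \). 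Since \( \Omega(n) \geq L_N \) forces \( n \geq 2^{L_N} \), only the range \( m \geq 2^{L_N} \) contributes.

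The key step is bounding \( B(m) \) via \cref{lem:RT} for \( 2^{L_N} \leq m \leq N \). In this range, \( L_N = 2\log\log N \) lies at least \( C := (L_N - \log\log m)/\sqrt{\log\log m} \geq \sqrt{\log\log N} \) standard deviations above the mean \( \log\log m \), so the Gaussian tail is bounded by Mill's ratio as \( e^{-C^2/2}/C \leq (\log N)^{-1/2}(\log\log N)^{-1/2} \), which is dominated by the error term \( O(1/\sqrt{\log\log m}) \) of \cref{lem:RT} uniformly in \( m \leq N \). Hence \( B(m) \ll m/\sqrt{\log\log m} \).

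Substituting into the Abel sum and applying the change of variable \( w = \log\log x \),
\[
\sum_{2^{L_N} \leq m \leq N}\frac{B(m)}{m^2\log m} \ll \int^{\log\log N}\frac{dw}{\sqrt{w}} \ll \sqrt{\log\log N}.
\]
Dividing by \( \log\log N \) produces \( O(1/\sqrt{\log\log N}) \), which in particular implies the claimed \( O((\log\log N)^{-1/4}) \) bound. The main technical point requiring care is verifying that the Gaussian tail is negligible relative to the Rényi--Turán error term throughout the deep large-deviation range---this is automatic here since \( C \geq \sqrt{\log\log N}\to\infty \)---so that the Berry--Esseen-type error dominates; everything else is routine partial summation.
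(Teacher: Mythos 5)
Your proposal is correct and follows the same high-level strategy as the paper's proof (interchange summation, Abel summation against $1/(n\log n)$, then apply the Rényi--Turán estimate from \cref{lem:RT} to bound the counting function), but your execution is cleaner and in fact yields a \emph{stronger} bound. The paper introduces an auxiliary cutoff $N_1$ defined by $\log\log N_1 = \sqrt{\log\log N}$, bounds the range $m \le N_1$ trivially, and then replaces the $m$-dependent error $O(1/\sqrt{\log\log m})$ by its worst value $O((\log\log N)^{-1/4})$ over $[N_1, N]$; this wasteful uniform bound is the source of the exponent $1/4$ in the statement. You instead observe that $\Omega(n) \ge L_N$ forces $n \ge 2^{L_N}$, so the partial sums $B(m)$ vanish below $2^{L_N}$ (which already exceeds any fixed threshold once $N$ is large, eliminating the small-$m$ issue entirely), and then you keep the $m$-dependence and evaluate $\sum_m \frac{1}{m\log m \sqrt{\log\log m}}$ by the substitution $w = \log\log x$, getting $O(\sqrt{\log\log N})$. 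Dividing by $\log\log N$ then gives $O((\log\log N)^{-1/2})$, which strictly improves the paper's $O((\log\log N)^{-1/4})$. Your justification that the Gaussian tail $e^{-C^2/2}/C$ with $C \ge \sqrt{\log\log N}$ is dominated by the Berry--Esseen error $O(1/\sqrt{\log\log m})$ is correct and is the same observation the paper makes implicitly by showing $J_N \subseteq (I_m)^c$ for its choice $C_m = \sqrt{\log\log m}$. The improved exponent has no downstream cost: in \cref{prop: Tail Term} the only thing needed is summability of $i\,\lambda_i^{-1}(\log\log N_i)^{-\alpha} = i^2 \rho^{-\alpha i}$ over $i$, which holds for any $\alpha > 0$, so your sharper bound is a free strengthening.
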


        \begin{proof}
            For \( N \in \N \), define \(L_N := 2 \log \log N \) and \( J_N := [L_N \,,\, \infty) \). Then
            \begin{align}
            \label{eqn: eta1}
                \sum_{L_N \leq k \leq N} \eta_N(k) 
                &= 
                \sum_{n \leq N} \frac{1}{n \log n} \mathbbm{1}_{J_N}(\Omega(n)) 
                \\
                \notag &= \frac{1}{N \log N} \sum_{n \leq N} \mathbbm{1}_{J_N}(\Omega(n)) \\ 
                \notag & \quad \quad + \sum_{m \leq N-1} \bigg( \frac{1}{m \log m} - \frac{1}{(m+1) \log (m+1) }\bigg) \sum_{n \leq m} \mathbbm{1}_{J_N}(\Omega(n))
            \end{align}
            For convenience, define 
            \[ 
                w(m) := m \bigg(\frac{1}{m \log m} - \frac{1}{(m+1) \log (m+1) } \bigg).
            \]
            Then 
            Equation \eqref{eqn: eta1} yields
            \begin{equation}
            \label{eqn: eta2}
                \sum_{L_N \leq k \leq N} \eta_N(k)
                \leq 
                \frac{1}{\log N} \EC{} \mathbbm{1}_{J_N}(\Omega(n)) + \sum_{m \leq N-1} w(m) \ECC{n}{[m]} \mathbbm{1}_{J_N}(\Omega(n))
            \end{equation}
            For \( m \in \N \), define \( C_m := \sqrt{\log \log m}\) and 
            \[
                I_m := \big[ \log \log m - C_m \sqrt{\log \log m} \,,\, \log \log m + C_m \sqrt{\log \log m} \big] = \big[ 0 \,,\, 2 \log \log m \big].
            \]
            By \cref{lem:RT}, there exists \( N_0, D \in \N \) such that for \( m \geq N_0\), 
            \[
                \ECC{n}{[m]} \mathbbm{1}_{(I_m)^c} (\Omega(n)) 
                = 
                \frac{1}{m} \# \Big\{ \frac{n \leq m \,:\, |\Omega(n) - \log \log m|}{\sqrt{\log \log m}} < C_m \Big\} 
                \leq 
                \frac{D}{\sqrt{\log \log m}}.  
            \]
            Now, define \( N_1 = N_1(N) \) by 
            \[
                \log \log N_1 := \sqrt{\log \log N}.
            \]
            Take \( N \) large enough so that \( N > N_1 > N_0 \). Notice that \( J_N \subset (I_m)^c \) for all \( m \leq N \). Then
            \begin{align*}
                \frac{1}{\log N} \EC{} &\mathbbm{1}_{J_N}(\Omega(n)) + \sum_{m \leq N-1} w(m) \ECC{n}{[m]} \mathbbm{1}_{J_N}(\Omega(n))
                \\
                &\leq
                \frac{1}{\log N} \EC{} \mathbbm{1}_{(I_m)^c}(\Omega(n)) + \sum_{m \leq N-1} w(m) \ECC{n}{[m]} \mathbbm{1}_{(I_m)^c}(\Omega(n))
                \\
                & \leq
                \frac{1}{\log N} \Big( \frac{D}{\sqrt{\log \log N}} \Big) + \sum_{m \leq N_1} w(m) 
                + \sum_{N_1 \leq m \leq N-1} w(m) \Big( \frac{D}{\sqrt{\log \log m}} \Big)   
                \\
                &\leq 
                \frac{D}{\sqrt{\log \log N_1}} \bigg( \frac{1}{\log N} + \sum_{m = N_1}^{N-1} w(m) \bigg) + \sum_{n \leq N_1} w(m)
            \end{align*} 
            Now, notice that for any \( M \in \N \),
            \[
                \frac{1}{\log M} + \sum_{n \leq M} w(m) = \sum_{n \leq M} \frac{1}{n \log n} = \log \log M + o(1/\log\log M),
            \]
            so that 
            \begin{align}
            \label{eqn: eta3}
                \frac{D}{\sqrt{\log \log N_1}} \bigg( \frac{1}{\log N} &+ \sum_{m = N_1}^{N-1} w(m) \bigg) + \sum_{n \leq N_1} w(m)
                \\
                \notag &\leq
                \frac{D}{\sqrt{\log \log N_1}} \log \log N + \log \log N_1
                \\
                \notag &= \Big( \frac{D}{\sqrt[\leftroot{-2}\uproot{2}4]{\log \log N}} + \frac{1}{\sqrt{\log \log N} }\Big) \log \log N,
            \end{align}
            where the final equality follows by definition of \( N_1\). Combining Equations \eqref{eqn: eta2} and \eqref{eqn: eta3},
            \[
                \frac{1}{\log \log N}\sum_{k \geq L_N} \eta_N(k) 
                \leq 
                \frac{D+1}{\sqrt[\leftroot{-2}\uproot{2}4]{\log \log N}},
            \]
            and we are done. 
        \end{proof}

        \begin{proof}[Proof of \cref{prop: Tail Term}]
            Let \( \rho > 1 \) and define \( N_i = \lfloor 2^{2^{\rho^i}} \rfloor \).
            By \cref{lem: eta rate}, there exists \( D, N' \in \N \) such that for \( N \geq N'\), 
            \begin{equation}
            \label{eqn: RT}
                \frac{1}{\log \log N}\sum_{L_N \leq k \leq N}\eta_N(k) \leq \frac{D}{\sqrt[\leftroot{-2}\uproot{2}4]{\log \log N}}. 
            \end{equation}

            Without loss of generality, take \( f \in L^1(\mu) \) non-negative. For each \( i \in \N \), define \( \lambda_i = 1/i \) and 
            \[
                E_i := \Big\{ x \in X \,:\, \underset{k \in [L_{N_i},N_i]}{\A^{\eta_{N_i}}} f(T^k x) > \lambda_i \Big\}.
            \]
            By Markov's Inequality and Equation \eqref{eqn: RT}, 
            \begin{align}
            \label{eqn: LOGLOG5} 
                \mu (E_i)
                &\leq 
                \frac{1}{\lambda_i} \int \frac{1}{\log\log N_i} \sum_{L_{N_i} \leq k \leq N_i} \eta_{N_i}(k) f(T^k x) \, d\mu
                \leq 
                \frac{D \norm{f}_1}{\lambda_i \sqrt[\leftroot{-2}\uproot{2}4]{\log \log N_i}}
            \end{align}
            for each \( N_i \geq N'\). By definition,
            \(
                \lambda_i \sqrt[\leftroot{-2}\uproot{2}4]{\log \log N_i} = \rho^{i/4}/i. 
            \)
            Since \( \sum_{i \geq 1} i/\rho^{i/4} < \infty \) for any \( \rho > 1 \), the Borel-Cantelli Lemma implies that for almost every \( x \in X\), 
            \[
                \underset{k \in [L_{N_i},N_i]}{\A^{\eta_{N_i}}} f(T^k x) < \frac{1}{i}
            \]
            for large enough \( i\). Hence
            \[
               \lim_{i \to \infty} \underset{k \in [L_{N_i},N_i]}{\A^{\eta_{N_i}}} f(T^k x) = 0 ,
            \]
            completing the proof.
        \end{proof}

\section{Proof of \cref{thm: Log Sweeping Out}}
\label{sec: Proof Log SSO}

    In this section, we demonstrate that \( \Omega(n) \) retains the strong sweeping out property for logarithmic averages. We first recount the main ideas in the proof of \cref{thm: Pointwise Nonconvergence} in \cite{Loyd2021}, as the proof of \cref{thm: Log Sweeping Out} follows a similar argument. Regrouping the ergodic averages by the value of \( \Omega(n) \),
    \begin{equation}
    \label{eqn: LOG0}
        \EC{} f(T^{\Omega(n)}x) = \ECC{k}{[N]} \pi_N(k) f(T^k x).
    \end{equation}

    First, \cref{thm: Hardy Ramanujan} implies that
    \[
        \ECC{k}{[N]} \pi_N(k) f(T^k x) 
        = 
        \sum_{k \in I_N} \frac{\pi_N(k)}{N} f(T^k x) + O(\varepsilon). 
    \]

    Combining \cref{lem: Erdos piN Approx} with Stirling's formula, the function \( \pi_N/N \) can be approximated by a Gaussian with mean and variance \( \log \log N \). 

    \begin{lemma}
    \label{lem: Erdos to Gaussian}
    The function \( \pi_N(k) \) satisfies
        \[
            \frac{\pi_{k}(N)}{N} 
            = 
            \frac{1}{\sqrt{2\pi \log \log N}} e^{-\frac{1}{2} \big(\frac{k - \log \log N}{\sqrt{\log \log N}} \big)^2} (1+ o_{N \to \infty}(1))
        \]
    uniformly for \( k \in I_N \).
    \end{lemma}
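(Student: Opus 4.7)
The plan is to derive the Gaussian approximation directly from the Erd\H os asymptotic of \cref{lem: Erdos piN Approx} by applying Stirling's formula and Taylor expanding around the mean $\log\log N$. Throughout, write $L := \log\log N$ and $s := k - 1 - L$, so that $|s| \leq C\sqrt{L} + 1$ whenever $k \in I_N$, and $k-1 = L + s \to \infty$ uniformly in $k \in I_N$ as $N \to \infty$.

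The first step would be to apply Stirling's formula $(k-1)! = \sqrt{2\pi(k-1)}\bigl((k-1)/e\bigr)^{k-1}(1 + o(1))$ inside \cref{lem: Erdos piN Approx} to obtain
\[
\frac{\pi_N(k)}{N} \;=\; \frac{1}{\sqrt{2\pi(k-1)}\,\log N}\left(\frac{eL}{k-1}\right)^{k-1}(1 + o(1))
\]
uniformly on $I_N$. Using the identity $\log N = e^L$, the right-hand side rewrites as
\[
\frac{1}{\sqrt{2\pi(k-1)}}\exp\!\bigl(s - (L+s)\log(1 + s/L)\bigr)(1 + o(1)).
\]

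The second step is to Taylor expand the logarithm. Since $|s|/L = O(L^{-1/2})$, the expansion $\log(1 + s/L) = s/L - s^2/(2L^2) + O(s^3/L^3)$ yields, after an elementary calculation,
\[
s - (L+s)\log(1+s/L) \;=\; -\frac{s^2}{2L} + O\!\left(\frac{|s|^3}{L^2}\right) \;=\; -\frac{(k-L)^2}{2L} + O(L^{-1/2}),
\]
where the cubic error is $O(L^{3/2}/L^2) = O(L^{-1/2})$ on $I_N$, and the passage from $s^2$ to $(k-L)^2$ introduces only $O(L^{-1/2})$ since $|k-L|/L = O(L^{-1/2})$. Consequently, the exponential factor equals $\exp\!\bigl(-\tfrac12((k-L)/\sqrt{L})^2\bigr)(1+o(1))$. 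The prefactor satisfies $\sqrt{2\pi(k-1)} = \sqrt{2\pi L}\,(1 + O(L^{-1/2}))$, and combining the three pieces delivers the claimed formula.

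The main subtlety is uniformity in $k \in I_N$: the Taylor remainder, the Stirling error, and the $(1+o(1))$ from \cref{lem: Erdos piN Approx} must all be controlled by a single $o(1)$ not depending on $k$. This is ensured by the bound $|s| \leq C\sqrt{L} + 1$, which makes $|s|/L$ tend to zero at a rate independent of $k$ on the interval, and which also keeps $k - 1$ uniformly large so Stirling's formula applies with a uniform error.
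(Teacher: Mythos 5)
Your proposal is correct and follows exactly the route the paper indicates (the lemma is introduced with the sentence that it follows from combining \cref{lem: Erdos piN Approx} with Stirling's formula, and the computation is left implicit): Stirling applied to $(k-1)!$, writing $\log N = e^{\log\log N}$, and a uniform Taylor expansion of the exponent around the mean, with the uniformity on $I_N$ handled correctly via $|k-\log\log N| = O(\sqrt{\log\log N})$. No issues to flag.
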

    
    Then, the strong sweeping out property for \( \Omega(n) \) is a consequence of the following proposition.  

    \begin{prop}[\cite{Loyd2021}]
    \label{prop: Operator SSO}
        The averaging operators 
        \[
            T_N f(x) 
            :=
            \frac{1}{\sqrt{2\pi \log \log N}} \sum_{k \in I_N} e^{- \frac{1}{2}(\frac{k- \log \log N}{\sqrt{\log \log N}})^2} f(T^k x)
        \]
        satisfy the strong sweeping out property, meaning for any non-atomic ergodic system \((X, \mathcal{B}, \mu, T)\) and \( \varepsilon > 0 \), there is a set \( A \in \mathcal{B} \) satisfying \( \mu(A) < \varepsilon\) and 
        \[
            \limsup_{N \to \infty} T_N \mathbbm{1}_A (x) = 1
            \quad \text{ and }
            \liminf_{N \to \infty} T_N \mathbbm{1}_A (x) = 0
        \]
        for \( \mu\)-almost every \( x \in X\). 
    \end{prop}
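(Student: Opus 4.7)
The plan is to invoke the Calderón transference principle (Lemma \ref{lem: Transference}) and construct an explicit bad set in a finite cyclic group, exploiting two structural features of the operators $T_N$: first, the kernel is a discretized Gaussian density with mean $m_N := \log \log N$ and variance $\sigma_N^2 := \log \log N$, so the defining sum is a Riemann sum for the integral of a genuine Gaussian density; second, as $N$ ranges over the integers, the centers $m_N$ fill in the positive real line with consecutive spacings tending to zero.

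Fix $\varepsilon > 0$ and an initial index $N_1$. First, I would choose $C$ large enough that the Gaussian tail $\int_{|t| \geq C} \frac{1}{\sqrt{2 \pi}} e^{-t^2/2}\,dt < \varepsilon$, then pick $N_2$ enormous so that $M := \floor{\log \log N_2}$ dominates $\log \log N_1$, $C$, and $1/\varepsilon$. The test system is $(\widetilde X, \nu, S) := (\Z/L\Z,\, \text{uniform},\, +1)$ with $L := M$, and the test set is $\widetilde E := \{0, 1, \dots, W - 1\} \subset \Z/L\Z$ with $W := \floor{M^{2/3}}$, giving $\nu(\widetilde E) = W/L \leq M^{-1/3} < \varepsilon$.

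The heart of the argument is a hit-probability calculation. Setting $m_x := (-x) \bmod L$, I would show that if $m_x$ lies in the safe range $[\log \log N_1,\, M - W]$, then the target interval $[m_x + C\sqrt{M},\, m_x + W - C\sqrt{M}]$ is contained in $[\log\log N_1, M]$, has length at least $W/2 \gg 1$, and hence contains at least one value $m_N$ with $N \in [N_1, N_2]$ (since consecutive values of $\log\log N$ differ by $o(1)$ for large $N$). For this choice of $N$, the window $x + I_N$ reduces modulo $L$ into $[0, W-1] = \widetilde E$, because $m_N - m_x \in [C\sigma_N, W - C\sigma_N]$, so
\[
    T_N \mathbbm{1}_{\widetilde E}(x) = \sum_{k \in I_N} \frac{1}{\sqrt{2\pi \sigma_N^2}}\, e^{-(k - m_N)^2/(2\sigma_N^2)} \geq \int_{-C}^{C} \frac{1}{\sqrt{2\pi}}\, e^{-t^2/2}\,dt \,-\, o(1) \geq 1 - 2\varepsilon.
\]
The safe set of $x$ has measure at least $1 - M^{-1/3} - \log\log N_1/M > 1 - \varepsilon$, verifying the hypothesis of Lemma \ref{lem: Transference}. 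The lemma transfers the construction to any non-atomic ergodic system, and a standard diagonalization in $\varepsilon$ produces a single set $A$ with $\mu(A) < \varepsilon$ and $\limsup_N T_N \mathbbm{1}_A = 1$ almost everywhere. The complementary statement $\liminf_N T_N \mathbbm{1}_A = 0$ follows from the symmetric observation that for most $x$ there are arbitrarily large $N$ with $(x + I_N) \cap \widetilde E = \emptyset$, forcing $T_N \mathbbm{1}_{\widetilde E}(x) = 0$ infinitely often.

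The main obstacle is the uniform-in-$N$ Riemann-sum approximation in the displayed inequality: one must show that the discrete Gaussian sum deviates from the continuous Gaussian integral by $o(1)$, despite $\sigma_N = \sqrt{\log\log N}$ growing to infinity extremely slowly. Euler-Maclaurin, or equivalently Poisson summation applied to the $\sigma_N$-rescaled Gaussian, should yield an error of size $O(1/\sigma_N)$, but uniformity of this bound in the center $m_N$ and in the cutoff $C$ requires careful verification. A secondary point: the test system $\Z/L\Z$ is not ergodic, but this is irrelevant because Lemma \ref{lem: Transference} requires only a measure-preserving system.
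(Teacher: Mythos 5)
The paper does not actually prove Proposition 5.2 --- it is cited verbatim from \cite{Loyd2021}, and the present paper only recalls it to reuse it in the proof of Theorem \ref{thm: Log Sweeping Out}. So there is no in-paper proof to compare against; the authors only indicate in the introduction that the underlying technique is ``a dynamical argument due to del Junco and Rosenblatt.'' Your construction (transference to a finite cyclic rotation, a small interval $\widetilde E$ as the bad set, and the observation that the Gaussian centers $m_N=\log\log N$ sweep through the group with vanishing gaps) is precisely a del Junco--Rosenblatt-style argument, so your approach is consistent with what the paper points to, and the core combinatorial calculation --- hit the target window $[m_x + C\sigma_N,\, m_x + W - C\sigma_N]$ by choosing $N$, then reduce $x+I_N$ into $\widetilde E$ --- is sound.

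A few remarks on the places you flagged or glossed over. The Riemann-sum worry is not a real obstruction: the Gaussian has a universally bounded derivative, so the discrepancy between the sum $\frac{1}{\sigma_N}\sum_k \frac{1}{\sqrt{2\pi}}e^{-t_k^2/2}$ (step $1/\sigma_N$) and $\int_{-C}^{C}\frac{1}{\sqrt{2\pi}}e^{-t^2/2}\,dt$ is $O(1/\sigma_N)$ uniformly in the center and in $C$, and $\sigma_N\ge\sqrt{\log\log N_1}$ once you restrict to $N\ge N_1$ large; one just needs to push $N_1$ up before applying Lemma \ref{lem: Transference} (which costs nothing, since the supremum over $[N_1',N_2]$ bounds the supremum over $[N_1,N_2]$ from below). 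The genuine soft spot is the $\liminf=0$ half. Lemma \ref{lem: Transference} as stated only transfers a \emph{lower} bound on a supremum, so it does not directly deliver the $\liminf$ statement, and your closing sentence (``for most $x$ there are arbitrarily large $N$ with $(x+I_N)\cap\widetilde E=\emptyset$'') needs to be run through a complementary transference step in the same finite system and then combined in the diagonalization. This is standard in the strong-sweeping-out literature but deserves to be spelled out rather than waved at. Two small corrections: $(\Z/L\Z,\text{uniform},+1)$ is in fact ergodic (it is transitive), just not non-atomic --- which is fine, since the hypothesis of Lemma \ref{lem: Transference} only asks for a measure-preserving system; and the lemma as written treats Ces\`aro averages $\frac{1}{N}\sum_n\mathbbm{1}_E(S^{a_n}x)$, so to apply it to the weighted operators $T_N$ one should say explicitly that the Rokhlin-tower transference applies verbatim to any family of positive operators built from finitely many powers of $S$.
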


    Now, consider the function
    \[ 
        \xi _N(k) = \sum_{n \in \P_k \cap [N]} \frac{1}{n}
    \]
    discussed in \cref{ss: NT Weights}. Regrouping the logarithmic averages along \( \Omega(n)\),
    \begin{equation}
    \label{eqn: LogLog Weighted Expression}
        \Elog f(T^{\Omega(n)}x) = \Elogg{k}{[N]} \xi_N(k) f(T^k x) .
    \end{equation}
    
    To prove \cref{thm: Log Sweeping Out}, we first require a logarithmic version of \cref{thm: Hardy Ramanujan}.  
    
    \begin{lemma}
    \label{lem: logarithmic weight support}
        Let \(\varepsilon > 0 \). The function \( \xi_N(k) \) satisfies
        \[
            \frac{1}{\log N} \sum_{k \leq N} \xi_N(k) = \frac{1}{\log N} \sum_{k \in I_N} \xi_N(k) + O(\varepsilon). 
        \]
    \end{lemma}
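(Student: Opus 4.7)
My plan is to reduce the bound to a Chebyshev tail estimate for a Poisson variable, using the uniform Erd\H{o}s asymptotic for $\xi_N(k)$ on $I_N$. The starting point is the identity
\[
    \sum_{k \leq N} \xi_N(k) \;=\; \sum_{n \leq N}\frac{1}{n} \;=\; \log N + O(1),
\]
so the lemma reduces to showing $\sum_{k \in I_N} \xi_N(k) \geq (1 - O(\varepsilon))\log N$. Note that, in contrast to the one-sided situation of \cref{lem: Eta weight distribution}, here the tail $I_N^c$ is two-sided: the lower tail is not contained in $I_m^c$ for all $m \leq N$ (since $x - C\sqrt{x}$ is increasing in $x$), so the telescoping / Hardy-Ramanujan approach used for $\eta_N$ would not apply verbatim.

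Instead, I will apply \cref{lem: Erdos Logarithmic Approx}, whose $(1 + o(1))$ error is uniform in $k \in I_N$, to get
\[
    \sum_{k \in I_N} \xi_N(k) \;=\; (1+o(1))\sum_{k \in I_N}\frac{(\log\log N)^k}{k!}.
\]
Setting $\lambda := \log\log N$ and letting $X \sim \mathrm{Poisson}(\lambda)$, the right-hand sum is exactly $e^{\lambda}\,\P(X \in I_N) = \log N \cdot \P(|X-\lambda| \leq C\sqrt{\lambda})$. Since $\E[X] = \mathrm{Var}(X) = \lambda$, Chebyshev's inequality yields $\P(|X-\lambda| > C\sqrt{\lambda}) \leq 1/C^2$, and hence
\[
    \sum_{k \in I_N} \xi_N(k) \;\geq\; (1 - 1/C^2 - o(1))\log N.
\]
Subtracting this from the total gives $\frac{1}{\log N}\sum_{k \notin I_N}\xi_N(k) \leq 1/C^2 + o(1)$.

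The main subtlety is the choice of $C$: to secure an $O(\varepsilon)$ bound I need $1/C^2 \leq \varepsilon$, which may force enlarging the Hardy-Ramanujan constant to $\max(C_{\mathrm{HR}},\varepsilon^{-1/2})$. This is harmless, since enlarging $C$ only enlarges $I_N$ and thus makes the statement easier; moreover the cited Erd\H{o}s estimate is in fact valid uniformly for $k$ in any band of width $O(\sqrt{\log\log N})$ centered at $\log\log N$, so the uniformity of the $(1+o(1))$ factor survives the enlargement.
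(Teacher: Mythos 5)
Your proof is correct and takes a genuinely different route from the paper's. The paper proves the lemma by expanding $\sum_{k \notin I_N}\xi_N(k)$ back into a sum $\sum_{n\le N}\tfrac{1}{n}\mathbbm{1}_{I_N^c}(\Omega(n))$, then replacing the fixed interval $I_N$ by moving intervals $I_{N,n}$ centered at $\log\log n$ (absorbing the difference into a $\delta$-tail of the harmonic sum) so that the Hardy-Ramanujan Theorem can be fed into a partial-summation argument, mirroring the telescoping scheme used for $\eta_N$ in \cref{lem: Eta weight distribution}. You instead use the identity $\sum_k \xi_N(k) = \log N + O(1)$ and the uniform Erd\H{o}s asymptotic (\cref{lem: Erdos Logarithmic Approx}) to reduce the desired bound to a Poisson tail: $\sum_{k\in I_N}\tfrac{\lambda^k}{k!} = e^\lambda\,\P(|X-\lambda|\le C\sqrt{\lambda})$ with $\lambda = \log\log N$, which Chebyshev handles immediately. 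Your route is noticeably shorter and more transparent, and it makes explicit the Poisson heuristic underlying both the Hardy-Ramanujan and Erd\H{o}s-Kac theorems; what it costs is that it invokes the quantitative Erd\H{o}s asymptotic rather than the softer Hardy-Ramanujan concentration bound, and it requires the (true, but worth flagging as you do) fact that the $(1+o(1))$ in \cref{lem: Erdos Logarithmic Approx} remains uniform on $I_N$ after enlarging $C$ to $\max(C_{\mathrm{HR}},\varepsilon^{-1/2})$, which is a valid reading of Erd\H{o}s's result but not literally what the paper's lemma statement guarantees. Your preliminary remark explaining why the one-sided telescoping argument from the $\eta_N$ case does not transfer verbatim (the lower tail of $I_N^c$ is not contained in $I_m^c$) correctly diagnoses the obstruction that forced the paper's authors to introduce the moving-interval device.
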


    \begin{proof}
        Let \( \varepsilon > 0 \). We want to show that 
        \[
            \frac{1}{\log N} \sum_{k \notin I_N} \xi_N(k) = O_{N \to \infty}(\varepsilon).
        \]
        Let \( I^c_N \) denote \( \R \setminus I_N\). Then 
        \begin{align}
        \label{eqn: LOG1}
            \frac{1}{\log N} \sum_{k \notin I_N} \xi_N(k) &= \frac{1}{\log N} \sum_{n \leq N} \frac{1}{n} \, \mathbbm{1}_{I^c_N} (\Omega(n)).     
        \end{align}
        For \( N \in \N \) and each \( n \leq N \), define moving intervals
        \[
            I_{N,n} := [\log\log n - C \sqrt{\log\log N}\,,\, \log\log n + C \sqrt{\log\log N}].
        \]
        Let \( \delta > 0 \). Notice that 
        \[
            \frac{1}{\log N} \sum_{n \leq N^\delta} \frac{1}{n} = \frac{\log (N^\delta)}{\log N} = \delta.
        \]
        For \( N^\delta \leq n \leq N \), \( \log \log n = \log \log N + O(1)\), so that by the Hardy-Ramanujan Theorem, 
        \[
            \frac{1}{n} \# \{ m \leq n \,:\, \Omega(m) \in I_N \} \geq \varepsilon. 
        \]
        Then
        \[
            \frac{1}{\log N} \sum_{N^\delta \leq n \leq N} \frac{1}{n} \big|(\mathbbm{1}_{I^c_N} - \mathbbm{1}_{I^c_{N,n}}) (\Omega(n))\big| 
            \leq 
            \frac{\varepsilon}{\log N} \sum_{N^\delta \leq n \leq N} \frac{1}{n} 
            = 
            \varepsilon(1- \delta)
            < \varepsilon.
        \]
        Hence, 
        \[
            \frac{1}{\log N} \sum_{n \leq N} \frac{1}{n} \, \mathbbm{1}_{I^c_N} (\Omega(n)) 
            = 
            \frac{1}{\log N} \sum_{n \leq N} \frac{1}{n} \, \mathbbm{1}_{I^c_{N,n}} (\Omega(n)) + O(\delta + \varepsilon),
        \]
        so that we can replace \( \mathbbm{1}_{I^c_N} \) by \( \mathbbm{1}_{I^c_{N,n}} \) in Equation \eqref{eqn: LOG1}.

        By the Hardy-Ramanujan Theorem, there is some \( N_0 \in \N \) such that for \( N \geq N_0 \). 
        \begin{equation}
            \frac{1}{N} \sum_{n \leq N} \mathbbm{1}_{I^c_{N,n}} (\Omega(n)) < \varepsilon. 
        \end{equation}
        Combined with partial summation, this implies for \( N > N_0\), 
        \begin{align*}
            \frac{1}{\log N} \sum_{n \leq N} \frac{1}{n} \, \mathbbm{1}_{I^c_{N,n}} (\Omega(n)) &= 
            \frac{1}{\log N} \Big( \frac{1}{N} \sum_{n \leq N} \mathbbm{1}_{I^c_{N,n}}(\Omega(n)) \Big)  
            \\
            & \quad \quad \quad \quad \quad + \frac{1}{\log N}\sum_{m \leq N-1} \frac{1}{m+1} \Big(\frac{1}{m} \sum_{n \leq m} \mathbbm{1}_{I^c_{N,n}} (\Omega(n)) \Big) \Big]
            \\
            & <
            \frac{\varepsilon}{\log N}\Big( 1 + \sum_{m = N_0}^N \frac{1}{m} \Big) + \frac{C_\varepsilon}{\log N}
            \\
            & \leq 
            \varepsilon + \frac{C_\varepsilon}{\log N}
        \end{align*}
        where \( C_\varepsilon\) is a constant not depending on \(N\). Taking \( N \) large enough so that \( C_\varepsilon \leq \varepsilon \log N \), we are done. 
        
    \end{proof}

    \begin{proof}[Proof of \cref{thm: Log Sweeping Out}]
        Let \( (X, \mu, T )\) be a non-atomic ergodic system and \( \varepsilon > 0\). By Equation \eqref{eqn: LogLog Weighted Expression},
        \[
            \Elog f(T^{\Omega(n)}x) 
            = 
            \underset{k\in [N]}{\mathbb{A}^{\xi_N}} f(T^k x)
        \]
        for any function \( f \) and \( x \in X \). \cref{lem: logarithmic weight support} guarantees that for all \( A \in \mathcal{B}\), 
        \begin{equation}
        \label{eqn: LOG5}
            \limsup_{N \to \infty} \Big| \frac{1}{\log N} \sum_{k \notin I_N} \xi_N(k) \mathbbm{1}_A(T^k x) \Big| \leq \limsup_{N \to \infty} \frac{1}{N} \sum_{k \notin I_N} \xi_N(k) \leq \varepsilon. 
        \end{equation}
        Additionally, by \cref{lem: Erdos Logarithmic Approx,lem: Erdos piN Approx}, 
        \[
            \frac{\xi_N(k)}{\log N} = \frac{1}{\log N}\frac{(\log\log N)^k}{k!} ( 1 + o(1)) = \frac{\pi_N(k)}{N} (1+o(1)),
        \]
        where both estimates are uniform for \( k \in I_N\). Hence by \cref{lem: Erdos to Gaussian},
        \begin{equation}
        \label{eqn: LOG6}
            \sum_{k \in I_N} \frac{\xi_N(k)}{\log N} \mathbbm{1}_A(T^k x) 
            = 
            \sum_{k \in I_N} \frac{1}{\sqrt{2\pi \log \log N}} \sum_{k \in I_N} e^{- \frac{1}{2}(\frac{k- \log \log N}{\sqrt{\log \log N}})^2} \mathbbm{1}_A(T^k x) + o(1). 
        \end{equation}
        Then \cref{prop: Operator SSO} yields the result.  
    \end{proof}

\section{Proof of \cref{thm: Perturbations of SSO}}
\label{sec: Proof Perturbations SSO}

    \begin{prop}\label{prop: SSO Interval Condition}
        Let \((a_n)\) be a sequence of positive integers. Suppose that for every \( N_0 \in \N \), \( C>0 \), and \( \varepsilon > 0 \), there exist integers \( r \) and \( (N_i)_{i\in[r]} \) satisfying \( N_i \geq N_0 \ \) for all \( i\in [r] \) and a sequence of intervals \( (J_i)_{i\in [r]} \) which has the following properties:
        \begin{enumerate}
            \item \( \dfrac{\#\{{n\in [N_i]: a_n\in J_i}\}}{N_i}\geq (1-\varepsilon)\) for every \( i\in [r] \).
            \item \( \dfrac{|\cup_{i\in [r]}J_i|}{\max_{i\in [r]}|J_i|}>C \).
        \end{enumerate}
        Then \( (a_n) \) has the strong sweeping out property.
    \end{prop}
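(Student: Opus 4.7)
The strategy is to apply the transference principle, \cref{lem: Transference}: it suffices to exhibit, for each $\varepsilon>0$ and $N_1\in\N$, a measure-preserving system $(\widetilde{X},\widetilde{\B},\nu,S)$, a set $\widetilde{E}\in\widetilde{\B}$ with $\nu(\widetilde{E})<\varepsilon$, and an integer $N_2>N_1$ such that
\[
\nu\Big\{x\in\widetilde{X}:\sup_{N_1\le N\le N_2}\frac{1}{N}\sum_{n\le N}\mathbbm{1}_{\widetilde{E}}(S^{a_n}x)>1-\varepsilon\Big\}>1-\varepsilon.
\]
I will work in the cyclic shift model $\widetilde{X}=\Z/M\Z$ with normalized counting measure and $S(x)=x+1$, with $M$ chosen at the end. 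Given $\varepsilon$ and $N_1$, I apply the hypothesis of the proposition with $N_0=N_1$, error $\varepsilon$, and a large constant $C=C(\varepsilon)>6/\varepsilon$ to obtain intervals $J_1,\ldots,J_r$ and indices $N_i\ge N_1$, and set $L=\max_i|J_i|$ and $N_2=\max_i N_i$.

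The key observation is that if $x\in\widetilde{X}$ satisfies $x+J_i\subseteq\widetilde{E}$ in $\Z/M\Z$, then condition~(1) gives
\[
\frac{1}{N_i}\sum_{n\le N_i}\mathbbm{1}_{\widetilde{E}}(S^{a_n}x)\ \ge\ \frac{\#\{n\in[N_i]:a_n\in J_i\}}{N_i}\ \ge\ 1-\varepsilon,
\]
so such $x$ lies in the super-level set at scale $N=N_i$. Hence the problem reduces to building $\widetilde{E}$ of measure less than $\varepsilon$ for which $G:=\bigcup_i\{x\in\widetilde{X}:x+J_i\subseteq\widetilde{E}\}$ satisfies $\nu(G)>1-\varepsilon$. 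My construction takes $\widetilde{E}$ to be an interval of length $2L$ in $\Z/M\Z$, so each piece $G_i:=\{x:x+J_i\subseteq\widetilde{E}\}$ is an interval of length $2L-|J_i|\ge L$ whose left endpoint is $-\min J_i\pmod{M}$. To cover $\widetilde{X}$ by the $G_i$'s, I first pass via a Vitali-type selection to a maximal pairwise-disjoint subfamily of the $J_i$, retaining total length at least $|\bigcup_i J_i|/3>CL/3$, and then take $M$ equal to the total length of the retained family, so that reduction modulo $M$ compresses the consecutive-minima gaps of the disjoint family down to at most $L$. The $G_i$'s then form an $L$-dense cover of $\Z/M\Z$, giving $\nu(G)=1$, while $\nu(\widetilde{E})=2L/M\le 6/C<\varepsilon$.

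The main obstacle is the final geometric step: showing that the minima of the retained disjoint intervals, once reduced modulo $M$, genuinely form an $L$-net in $\Z/M\Z$. The Vitali selection delivers disjointness in $\Z$ but not contiguity, so internal gaps in $\bigcup_i J_i$ exceeding $L$ could in principle leave residual gaps after reduction. I expect this to be handled either by refining the Vitali step to retain only a densely packed subfamily, by iterating to collapse oversized gaps, or by replacing $\widetilde{E}$ with a union of a few translates of $[0,2L)$ positioned to cover the remaining residue classes. Once $\nu(G)>1-\varepsilon$ and $\nu(\widetilde{E})<\varepsilon$ are established in the cyclic model, the key observation produces the maximal-function lower bound required by \cref{lem: Transference}, and the transference lemma extends it to an arbitrary non-atomic ergodic system, yielding the strong sweeping out property.
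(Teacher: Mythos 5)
Your proposal and the paper's proof diverge at a key structural point. You aim to produce the hypothesis of \cref{lem: Transference} directly, which requires the super-level set to have measure $>1-\varepsilon$ in the model system; equivalently, the sets $G_i=\{x:x+J_i\subseteq\widetilde{E}\}$ must essentially cover $\Z/M\Z$. The paper does something weaker and much easier: it disproves a weak maximal inequality with arbitrarily large constant. Working in the cyclic group identified with $[-L,L]$, with $E=[-M,M]$ and $M=\max_i|J_i|$, every $k\in -J_i$ satisfies $\frac{1}{N_i}\sum_{n\leq N_i}\mathbbm{1}_E(k+a_n)\geq 1-\varepsilon$ because $J_i-J_i\subseteq[-M,M]$, so the super-level set contains $\bigcup_i(-J_i)$, whose size exceeds $C\max_i|J_i|$ and hence can be made to dominate $D|E|$ for any prescribed $D$. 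No covering of the model group is required, only a count. The boost from ``ratio arbitrarily large at height $1-\varepsilon$'' to the density-one form appearing in \cref{lem: Transference} is a standard step in the sweeping-out literature, which the paper folds into its citation of that lemma.

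The gap you flag in your covering argument is genuine and does not appear to be repairable by the fixes you list. After the Vitali selection you have disjoint intervals of total length $M$, but their left endpoints modulo $M$ need not be $L$-dense. Concretely, take $J_i=[isL,\,isL+L]$ for $i=1,\dots,s$ with $s>6/\varepsilon$: these are already disjoint (so Vitali keeps all of them), $M\approx sL$, and every left endpoint is $\equiv 0\pmod{sL}$, so all the $G_i$ coincide with a single interval of length $\approx L$, giving $\nu(\bigcup_i G_i)\approx 1/s\ll 1-\varepsilon$ even though $\nu(\widetilde{E})\approx 2/s<\varepsilon$. The hypotheses of the proposition say nothing about the positions of the $J_i$ modulo $M$, so iterating the selection or repositioning $\widetilde{E}$ cannot close this gap in general. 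The effective fix, and the one the paper makes, is to abandon the covering goal entirely and prove only the ratio form of the failure of the maximal inequality, which is a counting statement with no geometry.
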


    \begin{remark}
        One can prove results similar to \cref{prop: SSO Interval Condition} for aperiodic flows sampled along a real sequence \((a_n)\). To achieve such a result, one can apply \cite[Theorem 2.3]{Mondal2023}.
    \end{remark}
    
    \begin{proof}
         By \cref{lem: Transference}, it is enough to disprove a maximal ergodic inequality for a chosen system. Take the periodic system of period \( 2L \) endowed with the shift map. We want to show that for every \( N_0 \in \N \), \( D >0 \), and \( \varepsilon>0 \), there exists an interval \( [-L,L] \) and a set \( E \sse [-L,L] \) such that
        \begin{equation} 
        \label{eq:3.63}
            \# \Big\{k\in [-L,L] \,:\, \sup_{ N \geq N_0 } \EC{} \mathbbm{1}_E (k+a_n) > 1-\varepsilon\} 
            >
            D |E|.
        \end{equation}
        Take \( C = D/2 \) and choose \( (N_i)_{i\in [r]} \) and \( (J_i)_{i\in [r]} \) satisfying the hypotheses of the proposition. Let \( M = \max_{i\in [r]}{|J_i|} \). Choose \( L \) large enough so that \( [-L,L] \) contains \( J_i \) and \( -J_i \) for all \( i\in [r] \). Define \( E= [-M,M] \). Let \( k\in -J_i \) for some \( i\in [r] \). By the first property, 
        \[
            \dfrac{\#\{{n\in [N_i]: a_n\in J_i}\}}{N_i}
            \geq 
            (1-\varepsilon).
        \]
        This implies that
        \[
            \ECC{n}{[N_i]}\mathbbm{1}_E(k+{a_n})
            = 
            \dfrac{\#\{{n\in [N_i]: k+a_n\in E}\}}{N_i}
            \geq 
            (1-\varepsilon).
        \]
        Hence
        \begin{align*}
            \#\Big\{ k \in [-L,L] \,:\, \sup_{N\geq N_0}\EC{} \mathbbm{1}_E (k+a_n) > 1-\varepsilon \Big\}
            =
            \Big|\bigcup_{i\in [r]}J_i \Big| > C M = D |E| .
        \end{align*}
         where the last inequality follows from the second property in the statement of the proposition.
    \end{proof}

    \begin{proof}[Proof of \cref{thm: Perturbations of SSO}]
        Let \( \varepsilon > 0 \) and take \( E \) to be a fixed set satisfying 
        \[
            \lim_{N \to \infty} \frac{|E \cap [N]|}{N} > 1 - \varepsilon. 
        \]
        Define \( p_{n,E} := \max_{j \in E \cap [N]} |a_j - b_j|\). Suppose that \((a_n)\), \((b_n)\), and \((p_{n,E})\) satisfy the conditions of \cref{thm: Perturbations of SSO}. Let \( N_0 \in \N \), \( C > 0 \). We check that \( (a_n) \) satisfies the conditions of \cref{prop: SSO Interval Condition}. First, take \( R \in \N \) such that \( 1/R < \varepsilon \) and let \( K_0 \) and \( r \) be large positive integers to be chosen later. For \( i \in [r] \), define \( N_i := R^{(K_0+i)}\) and 
        \[
            J_i= [b_{N_{i-1}}-p_{N_r, E} \,,\, b_{N_i}+p_{N_r, E}].
        \]
        For now, choose \(K_0 \geq N_0\) large enough such that \( |E \cap [N]| > (1-\varepsilon) N \) for all \( N \geq N_1 \). Observe that for each \( n \in \{ N_{i-1},N_i\} \cap E \), we have \( a_n\in J_i \) by definition of \( p_{N_r, E} \). Hence,
        
        \begin{equation}\label{eq:K1}
            \dfrac{\# \{ n\in [N_i] \cap E : a_n \in J_i \} }{N_i}
            \geq 
            \frac{N_i-N_{i-1} - \varepsilon N_i}{N_i} 
            \geq 
            1- \varepsilon - \frac{1}{R}
            \geq 
            1 - 2\varepsilon.
        \end{equation}
        Hence the intervals \( J_i \) satisfy the first condition of \cref{prop: SSO Interval Condition}. To verify the second condition, it will be sufficient to show that for each \( i \in [r] \),
        
        \begin{equation}\label{eq:condition2}
            \dfrac{|\cup_{k \in [r]} J_k|}{\max_{k \in [r]} {J_k}} 
            \geq 
            \dfrac{b_{N_r}-b_{N_1}}{b_{N_i}-b_{N_{i-1}}+2p_{N_r, E}}
            \geq 
            C.
        \end{equation}

        The first inequality is apparent since 
        \[
            |\cup_{k \in [r]} J_k| = b_{N_r}-b_{N_1} + 2 p_{N_r, E} \geq b_{N_r}-b_{N_1}. 
        \]
        To prove the second, define an auxiliary sequence \( (s_k) \) by \( b_{R^{k}} := R^{k s_k} \). By the first hypothesis of \cref{thm: Perturbations of SSO}, the sequence \( s_k \) must tend to zero in \(k \). Else \( s_k \geq \delta \) for some fixed \( \delta > 0\) so that
        \[
            \frac{b_{R^{k}}}{(R^k)^\delta} = \frac{R^{k s_k}}{(R^k)^\delta} \geq 1.  
        \]
        We now show that 
        \begin{equation}\label{eq:3.64}
            \lim_{k \to \infty} \dfrac{b_{R^k}}{b_{R^k}- b_{R^{k-1}}} = \infty.  
        \end{equation}
        To the contrary, suppose \( 0<\dfrac{b_{R^k}}{b_{R^k} - b_{R^{k-1}}}< M \) for some fixed \( M > 0 \). Then
        \begin{align*}
            R^{\Big((k-1) s_{k-1}-k s_k \Big)} 
            \leq 
            1-\frac{1}{M}.
        \end{align*}
        Since \( R > 1 \), this implies \( k s_k -(k-1) s_{k-1} \geq \eta >0 \) for some fixed \( \eta > 0 \). But then \( ks_k \geq \eta k \), contradicting that \( (s_k)\) tend to zero.
        
        Then, by Equation \eqref{eq:3.64}, we can take \( K_0 \) large enough such that for  each \( i\in [r]\), 
        
        \begin{equation}
        \label{eq:choice1}
            \dfrac{b_{N_r}}{b_{N_i}-b_{N_{i-1}}}
            \geq 
            \dfrac{b_{N_i}}{b_{N_i}-b_{N_{i-1}}}
            \geq 
            4C.
        \end{equation}
        
        Now, take \( K_0 \) possibly larger so that it satisfies Equations \eqref{eq:K1} and \eqref{eq:choice1}, as well as
        \begin{equation}\label{eq:lowerbound}
            \dfrac{b_{K}}{p_{K,E}}\geq 8C 
        \end{equation}
        for all \( K \geq K_0 \). Finally, choose \( r \) large enough such that
        
        \begin{equation}\label{eq:choice2}
            \dfrac{b_{N_r}-b_{N_1}}{b_{N_r}}\geq \frac{1}{2}.
        \end{equation}
        
        Then, combining Equations \eqref{eq:choice1}, \eqref{eq:lowerbound}, and \eqref{eq:choice2}, we obtain Equation \eqref{eq:condition2}, completing the proof. 
    \end{proof}

    \begin{proof}[Proof of \cref{cor: CLT SSO}]
        Set \( a_n = a(n) \) and \( b_n = b(n) \). The sequence \((b_n)\) satisfies the first the condition of \cref{thm: Perturbations of SSO} by assumption. Now, let \( \varepsilon > 0\). By assumption, there exists a function \(p(n)\) and \(N_0 \in \N\) such that for \( N \geq N_0\), 
        \begin{equation}
        \label{eqn: Density}
            \# \{ n \leq N \,:\, |a(n) - b(n)| > p(n) \} < \varepsilon N. 
        \end{equation}
        Set \( E = \N \setminus \{ n \in N \,:\, |a_n - b_n| > p(n) \} \). Equation \eqref{eqn: Density} implies that \( d(E) > 1- \varepsilon \) and
        \[
            p_{N, E} 
            := 
            \max_{n \in E \cap [N]} |a_n - b_n|
            \leq 
            \max_{n \in E \cap [N]} p(n)
            \leq 
            p(N). 
        \]
        Then 
        \[
            \lim_{N \to \infty} \frac{p_{N, E}}{b_N} \leq \lim_{N \to \infty} \frac{p(N)}{b(N)} = 0, 
        \]
        satisfying the second condition. 
    \end{proof}

    \begin{proof}[Proof of \cref{cor: Divisor and little omega}]
        For \( x \in \R \), let \( \Phi(x) \) denote the standard normal distribution
        \[
            \Phi(x) = \int_{-x}^x e^{-t^2/2} dt.
        \]
        In \cite{Billingsley74}, it is shown that \(\omega(n)\) and \( \log d(n) \) satisfy the following central limit theorems:
        \[
            \lim_{N \to \infty} \frac{1}{N} \#\Big\{ n \leq N \,:\, |\omega(n) - \log \log N| < x \sqrt{\log \log N} \Big\} = \Phi(x)
        \]
        and 
        \[
            \lim_{N \to \infty} \frac{1}{N} \#\Big\{ n \leq N \,:\, |\log d(n) - \log 2 \log \log N| < x \log 2 \sqrt{\log \log N} \Big\} = \Phi(x).
        \]
        Applying \cref{cor: CLT SSO} with \(a(n) = \omega(n) \), \(b(n) = \log \log n\), and \( p(n) = \sqrt{\log \log n}\), we obtain that \( \omega(n)\) has the strong sweeping out property. Similarly, applying \cref{cor: CLT SSO} with \(a(n) = \log d(n) \), \(b(n) = \log 2\log \log n\), and \( p(n) = \log 2 \sqrt{\log \log n}\), we obtain that \( \log d(n) \) has the strong sweeping out property.   
        
    \end{proof}

\bibliography{MainReferenceLibrary}{}
\bibliographystyle{halpha}

\end{document}